\newtheorem{thrm}{Theorem}[section]
\newtheorem{cor}[thrm]{Corollary}
\newtheorem{lem}[thrm]{Lemma}
\newtheorem{prop}[thrm]{Proposition}
\theoremstyle{definition}
\newtheorem{defn}[thrm]{Definition}
\newtheorem{rem}[thrm]{Remark}
\crefname{thrm}{Theorem}{Theorems}
\crefname{lem}{Lemma}{Lemmas}
\crefname{cor}{Corollary}{Corollaries}
\crefname{prop}{Proposition}{Propositions}
\crefname{defn}{Definition}{Definitions}
\crefname{exm}{Example}{Examples}
\crefname{rem}{Remark}{Remarks}
\crefname{section}{Section}{Sections}
\crefname{equation}{\unskip}{\unskip}
\crefname{enumi}{\unskip}{\unskip}
\crefname{subsection}{Subsection}{Subsections}
\newcommand{\mylabel}[2]{#2\def\@currentlabel{#2}\label{#1}}
\renewcommand{\iff}{\Leftrightarrow}
\newcommand{\impl}{\Rightarrow}
\newcommand{\id}{\mathrm{id}}
\newcommand{\I}{\mathcal I}
\newcommand{\cG}[1]{\mathcal G{(#1)}}
\newcommand{\D}{\mathcal D}
\newcommand{\U}[1]{\mathcal U{(#1)}}
\newcommand{\dom}[1]{\operatorname{\mathrm{dom}}{#1}}
\newcommand{\ran}[1]{\operatorname{\mathrm{ran}}{#1}}
\newcommand{\End}{\mathrm{End}}
\newcommand{\mend}{\operatorname{end}}
\newcommand{\minn}{\operatorname{in}}
\newcommand{\exo}{\operatorname{exo}}
\newcommand{\m}{{}^{-1}}
\newcommand{\0}{\theta}
\newcommand{\ve}{\varepsilon}
\newcommand{\af}{\alpha}
\newcommand{\bt}{\beta}
\newcommand{\f}{\varphi}
\newcommand{\dl}{\delta}
\newcommand{\M}{\mathcal M}
\newcommand{\tl}{\tilde}
\begin{document}

\title[$H^3(G,C(A))$ and extensions of $A$ by $G$]{The third partial cohomology group and existence of extensions of semilattices of groups by groups}
	\author{Mikhailo Dokuchaev}
	\address{Insituto de Matem\'atica e Estat\'istica, Universidade de S\~ao Paulo,  Rua do Mat\~ao, 1010, S\~ao Paulo, SP,  CEP: 05508--090, Brazil}
	\email{dokucha@gmail.com}
	\author{Mykola Khrypchenko}
	\address{Departamento de Matem\'atica, Universidade Federal de Santa Catarina, Campus Reitor Jo\~ao David Ferreira Lima, Florian\'opolis, SC,  CEP: 88040--900, Brazil \and Centro de Matem\'atica e Aplica\c{c}\~oes, Faculdade de Ci\^{e}ncias e Tecnologia, Universidade Nova de Lisboa, 2829--516 Caparica, Portugal}
	\email{nskhripchenko@gmail.com}
	\author{Mayumi Makuta}
	\address{Insituto de Matem\'atica e Estat\'istica, Universidade de S\~ao Paulo,  Rua do Mat\~ao, 1010, S\~ao Paulo, SP,  CEP: 05508--090, Brazil}
	\email{may.makuta@gmail.com} 
	\subjclass[2010]{ Primary: 20J06; secondary: 20M18, 20M30, 20M50, 18G60, 16S35, 16W22.}
	\keywords{Partial group action, partial cohomology, obstruction, extension, inverse semigroup}
	\begin{abstract} We  introduce the concept of a partial  abstract kernel associated to a group $G$ and a semilattice of groups $A$ and relate the partial cohomology group $H^3(G,C(A))$ with the  obstructions to the existence of admissible extensions of $A$ by $G$ which realize the given   abstract kernel. We also show that if such extensions exist then they are classified by   $H^2(G,C(A)).$
\end{abstract}

	\maketitle
	
	\section*{Introduction}  Influenced by R. Exel's notion of a continuous twisted partial group action on a $C^*$-algebra   \cite{E0} and its  ring theoretic analogue in \cite{DES1},  a cohomology theory was introduced in \cite{DK}, which suits   unital twisted partial actions as well as the concept of their equivalence  given in \cite{DES2}. The cohomology from \cite{DK}   is strongly related to the cohomology of inverse semigroups. It found applications to partial projective group representations in  \cite{DK,DoSa}, to a generalization of the Chase-Harrison-Rosenberg se\-ven term exact sequence  for partial Galois extensions in \cite{DoPaPi,DoPaPiRo}, and to the study of ideals in reduced crossed products of $C^*$-algebras by global actions in \cite{KennedySchafhauser}. It  also influenced a Hopf theoretic treatment of partial cohomology in \cite{BaMoTe} and  a study of its affinity with extensions in \cite{DK2,DK3}. It became clear from \cite{DK2} that, in order to deal with extensions, a more general multiplier-valued partial group cohomology theory is needed. This was introduced in \cite{DK3} and appropriate interpretations for the first and second partial co\-ho\-mo\-lo\-gy groups in terms of extensions of semilattices of abelian groups by groups were obtained. More information on partial co\-ho\-mo\-lo\-gy and other developments around partial actions may be found in the survey ar\-ticle~\cite{D3}.\\

The goal of the present paper is to relate  the third partial cohomology group with obstructions to the existence of  extensions of  semilattices of non-necessarily abelian groups by  groups, which realize  partial  abstract kernels, and, if such extensions exist, classify them by the second partial  cohomology group.\\

   In Section~\ref{sec:Prelim} we give some notions and establish  preliminary facts on multipliers of semigroups, twisted partial group actions,  certain inverse monoids linked to the symmetric inverse monoid, relatively invertible endomorphisms of semilattices of groups, partial homomorphisms and premorphisms. In Section~\ref{sec:abskernextAbyG} we introduce the concept of a
(partial)  abstract kernel associated to a group $G$ and a semilattice of groups $A$ as a partial homomorphism $\psi :   G \to  \varsigma(A), $  where   $\varsigma(A)$ is the inverse monoid formed by certain equivalence classes of isomorphisms between ideals of $A.$  We  show that equivalent admissible extensions of $A$ by $G$ result in the same abstract kernel (see Proposition~\ref{prop:nunequivmesmokernelabs}), define the obstruction  to an admissible extension associated to an abstract kernel  $\psi $ (a partial $3$-cocycle), construct a partial action on the center $C(A)$ related to  $\psi $ and prove in Theorem~\ref{theo:main1} that   $\psi $  possesses an admissible extension if and only if the corresponding obstruction is the trivial element of the third partial cohomology group  $H^3(G,C(A)).$ In the final Subsection~\ref{sec-descr-ext} we show that if   $\psi $ possesses  an admissible extension then  the equivalence classes of admissible extensions realizing $\psi$ are in a one-to-one correspondence with the elements of $H^2(G, C(A))$ (see Theorem~\ref{theo:main2}).

	\section{Preliminaries}\label{sec:Prelim}
	
	\subsection{Multipliers}\label{sec:mult} 
	We shall use several definitions and technical results about multipliers, so all of them are gathered in this subsection.
	\begin{defn}\label{def:multiplicador}
		A \textit{multiplier} of a semigroup $S$ is a pair $m$ of maps $s \mapsto ms$ and $s \mapsto sm$ from $S$ to itself, such that for all $s,t \in S$:
		\begin{enumerate}[label=(\roman*)]
			\item $m(st) = (ms)t$;
			\item $(st)m = s(tm)$;
			\item $s(mt) = (sm)t$.
		\end{enumerate}
		The set of multipliers of $S$, denoted by $\M(S)$, forms a monoid under the usual composition of maps.
	\end{defn}
	
	\begin{rem}\label{m-and-n-associate-with-s} 
		\begin{enumerate}
			\item  Let $S^2=S$. Then for all $m,n\in\M(S)$ and $s\in S$
			\begin{align*}
			(ms)n=m(sn).
			\end{align*} 
			\item\label{ws=sw-for-s-in-C(S)} Let $C(S)^2=C(S)$. Then for all $m\in\M(S)$ and $s\in C(S)$
			\begin{align*}
				ms=sm.
			\end{align*}
		\end{enumerate}
	In particular, both items are true, if $S$ is inverse.
	\end{rem}
	\begin{proof}
		Item (i) is the semigroup version of~\cite[Proposition 2.5 (ii)]{DE}, while (ii) follows from~\cite[Remark 5.2]{DK2}. 
		Now, if $S$ is inverse, then $s=s(ss\m)\in S^2$ for any $s\in S$, so $S^2=S$. Moreover, $C(S)$ is an inverse subsemigroup of $S$, as for any $s\in C(S)$ and $t\in S$ one has $s\m t=s\m (t\m)\m=(t\m s)\m=(st\m)\m=ts\m$, whence $s\m\in C(S)$. Thus, $C(S)^2=C(S)$ also holds.
	\end{proof}

	\begin{defn}\label{def:innermult}
		Let $S$ be a semigroup. Define $\phi : S \to \M(S)$, $s \mapsto \phi_s$, to be the multiplier that acts on $t \in S$ as follows:
		\begin{align*}
			t \phi_s = ts,\ \phi_s t = st.
		\end{align*}
		The multiplier $\phi_s$ will be called the \textit{inner multiplier}\footnote{Petrich \cite{Petrich} calls this an \textit{inner bitranslation} induced by $s$.} associated with the element $s$ of $S$.
	\end{defn}
	
	\begin{defn}{\cite[p. 3282]{DES1}}\label{def:malpha}
		Let $\af : S \to T$ be an isomorphism of semigroups. For any $m\in\M(S)$ denote by $m^\af$ the multiplier of $T$ acting on $t\in T$ in the following way:
		\begin{align}\label{tm^af=af(af-inv(t)m)}
			t m^\af = \af(\af\m(t) m),\ m^\af t = \af(m \af\m(t)).
		\end{align}
	\end{defn}

	\begin{rem}\label{m->m^af-iso}
		It is easily seen that $m\mapsto m^\af$ is an isomorphism of monoids $\M(S)\to\M(T)$.
	\end{rem}

	\begin{defn}\label{defn-conj-by-multiplier}
		Let $S^2=S$ and $m\in\U{\M(S)}$. Denote by $\mu(m)$ the conjugation $s\mapsto msm\m$ on $S$. By \cref{m-and-n-associate-with-s} it is well-defined. Moreover, it is clearly an automorphism of $S$.
	\end{defn}

	\begin{lem}\label{lem:malpha}
		Let $\af:S\to T$ be an isomorphism of semigroups.
		\begin{enumerate}[label=(\alph*)]
			\item If $\bt:T\to U$ is another semigroup isomorphism, then
			\begin{align}\label{eq:mbetaalpha}
			(m^\af)^\beta = m^{\beta \circ \af}
			\end{align}
			for any $m\in\M(S)$.
			\item If $S^2=S$, then
			\begin{align}\label{eq:malphaconj}
			\mu(m^\af)=\af \circ \mu(m) \circ \af\m
			\end{align}
			for any $m\in\U{\M(S)}$.
		\end{enumerate} 
	\end{lem}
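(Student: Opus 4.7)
The plan is to prove both parts by direct unfolding of Definitions \ref{def:malpha} and \ref{defn-conj-by-multiplier}, evaluating both sides of each claimed equality on an arbitrary element and then using functoriality of the inverse.

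For part (a), I would fix an arbitrary $u\in U$ and apply both multipliers of $U$ to $u$. On the one hand, using \eqref{tm^af=af(af-inv(t)m)} twice (first for $\bt$, then for $\af$),
\[
u(m^\af)^\bt = \bt\bigl(\bt\m(u)\cdot m^\af\bigr) = \bt\Bigl(\af\bigl(\af\m(\bt\m(u))\,m\bigr)\Bigr) = (\bt\circ\af)\bigl((\bt\circ\af)\m(u)\,m\bigr),
\]
which is exactly $u\cdot m^{\bt\circ\af}$. The identity $(m^\af)^\bt u = m^{\bt\circ\af}u$ is obtained by the symmetric computation on the left side of \eqref{tm^af=af(af-inv(t)m)}. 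This is essentially a ``chain-rule'' check with no subtlety.

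For part (b), the first task is to make sense of the right-hand side: since $\mu$ is only defined on units of $\M$, I need $m^\af\in\U{\M(T)}$ and a formula for its inverse. By \cref{m->m^af-iso} the assignment $m\mapsto m^\af$ is a monoid isomorphism, so $m^\af$ is invertible whenever $m$ is, with $(m^\af)\m=(m\m)^\af$. I also need $T^2=T$ to invoke \cref{m-and-n-associate-with-s}(i); this is immediate from $S^2=S$ and the surjectivity of $\af$. Then, for any $t\in T$,
\[
\mu(m^\af)(t) = m^\af\,t\,(m\m)^\af = m^\af\bigl(t\,(m\m)^\af\bigr),
\]
where the second equality uses \cref{m-and-n-associate-with-s}(i). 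Unfolding $t(m\m)^\af=\af(\af\m(t)m\m)$ and then $m^\af(\af(\cdot))=\af(m\,\cdot)$ via \eqref{tm^af=af(af-inv(t)m)} collapses the expression to $\af(m\,\af\m(t)\,m\m)=\af(\mu(m)(\af\m(t)))$, which is the right-hand side of \eqref{eq:malphaconj} evaluated at $t$.

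The only non-routine point is the bookkeeping in (b): one has to be clear that $m^\af$ is invertible with the expected inverse, and that $T^2=T$ is what legitimises the associative rearrangement $m^\af\,t\,(m\m)^\af = m^\af(t\,(m\m)^\af)$. Once those two remarks are in place, the computation in both (a) and (b) is a one-line application of Definition \ref{def:malpha}.
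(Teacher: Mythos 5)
Your proof is correct and follows essentially the same route as the paper's: part (a) is the same two-fold unfolding of \cref{tm^af=af(af-inv(t)m)}, and part (b) is the same direct computation landing on $\af(m\,\af\m(t)\,m\m)$, with the paper merely applying the two multipliers in the opposite order (left first, then right) and leaving the facts $(m^\af)\m=(m\m)^\af$ and $T^2=T$ implicit where you spell them out.
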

	\begin{proof}
		(a) Given $s \in S$, we have
			\begin{align*}
				s (m^\af)^\beta & =  \beta(\beta\m(s)m^\af)  =  \beta(\af(\af\m(\beta\m(s))m)) \\
				& =  (\beta \circ \af)((\beta \circ \af)\m(s)m)  =  s m^{\beta \circ \af}.
			\end{align*}
			Similarly, $(m^\af)^\beta s = m^{\beta \circ \af} s$.
			
			(b) For arbitrary $t \in T$ we have
			\begin{align*}
			\mu(m^\af)(t) & =  (m^\af t) (m^\af)\m =  \af(\af\m(\af(m\af\m(t)))m\m) \\
			  & =  \af( m \af\m(t) m\m ) =  (\af \circ \mu(m) \circ \af\m)(t).
			\end{align*}
	\end{proof}
	
	\begin{lem}\label{m-in-C(M(S))-iff-ms=sm}
		Let $S^2=S$ and $m \in \M(S)$. Then
		\begin{align*}
			m \in C(\M(S)) \iff \forall s \in S:\ ms = sm.
		\end{align*}
	\end{lem}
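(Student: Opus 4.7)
The plan is to prove the two implications separately. For the reverse direction ($\Leftarrow$), I would assume $ms = sm$ for every $s \in S$, fix an arbitrary $n \in \M(S)$, and verify that $mn = nm$ as multipliers by checking equality of their left and right actions on a generic $t \in S$. Writing $t = ab$ via $S^2 = S$, the left-action identity $(mn)t = (nm)t$ unfolds as
\begin{align*}
m(nt) &= m((na)b) = (m(na))b = ((na)m)b = (na)(mb), \\
n(mt) &= n((ma)b) = n((am)b) = n(a(mb)) = (na)(mb),
\end{align*}
using axioms (i), (iii) of \cref{def:multiplicador} together with the hypothesis $m(na) = (na)m$ in the first chain and $ma = am$ in the second. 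The right-action identity reduces, via two uses of axiom (ii), to showing $(bm)n = (bn)m$ for every $b \in S$; this follows by the chain $(bm)n = (mb)n = m(bn) = (bn)m$, combining the hypothesis with \cref{m-and-n-associate-with-s}(i).

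For the forward direction ($\Rightarrow$), I would fix $s \in S$ and exploit that $\phi_s \in \M(S)$ commutes with $m$. Evaluating the left actions of $m \phi_s$ and $\phi_s m$ on $t \in S$ via axioms (i), (iii) gives $(ms)t = (sm)t$; evaluating the right actions via axioms (ii), (iii) gives $t(ms) = t(sm)$. Thus the inner multipliers $\phi_{ms}$ and $\phi_{sm}$ coincide, so $ms = sm$ provided the map $s \mapsto \phi_s$ is injective. This injectivity is automatic in the inverse-semigroup setting of the paper's applications (more generally, it is the standard weak-reductivity assertion that $us = vs$ and $su = sv$ for every $s \in S$ force $u = v$), which closes the forward direction.

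I expect the main technical point to be the right-action step of ($\Leftarrow$): the identity $(bm)n = (bn)m$ is not a single axiom rewrite but requires the hypothesis on $b$, then \cref{m-and-n-associate-with-s}(i) (the one place where the assumption $S^2 = S$ enters essentially in this direction), and then the hypothesis again applied to $bn$. The forward direction is otherwise formal once one reads off the four pointwise identities from $m \phi_s = \phi_s m$, with the only subtlety reduced to the (standard) injectivity of $\phi$ on the inverse semigroups supplying the intended applications.
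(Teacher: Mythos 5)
Your reverse direction is correct and is essentially the paper's own argument: the paper likewise reduces everything to the hypothesis plus \cref{m-and-n-associate-with-s}(i), only it does not need to decompose $t=ab$ for the left action, since the chain $(mn)s=m(ns)=(ns)m=n(sm)=n(ms)=(nm)s$ already works for every $s\in S$.

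The gap is in the forward direction. From $m\phi_s=\phi_s m$ you correctly read off $(ms)t=(sm)t$ and $t(ms)=t(sm)$ for all $t\in S$, but to pass from $\phi_{ms}=\phi_{sm}$ to $ms=sm$ you need injectivity of $s\mapsto\phi_s$, i.e.\ weak reductivity of $S$. This is not a hypothesis of the lemma, it is not a consequence of $S^2=S$ in general (global idempotency and weak reductivity are independent conditions), and you leave it unproved even in the inverse case; it is true that inverse semigroups are weakly reductive (from $su=sv$ for all $s$ one gets $u=uu\m v$ and $v=vv\m u$, hence $uu\m=uu\m vv\m=vv\m$ and $u=vv\m v=v$), but that has to be said, and in any case the lemma is stated for arbitrary $S$ with $S^2=S$ and is applied in that generality in \cref{C(M(S))-sst-M(C(S))} and \cref{lem:existbeta}. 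The paper avoids the issue entirely: writing $s=tu$, it computes in a single chain
\begin{align*}
ms=m(tu)=(m\phi_t)u=(\phi_t m)u=t(mu)=(tm)u=t(m\phi_u)=t(\phi_u m)=(tu)m=sm,
\end{align*}
using centrality of $m$ first against $\phi_t$ and then against $\phi_u$, with axiom (iii) of \cref{def:multiplicador} supplying the middle step $t(mu)=(tm)u$; no cancellation is needed. You should replace your forward direction by this computation (or add and prove a weak-reductivity hypothesis, at the cost of weakening the lemma as stated).
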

	\begin{proof}
		The \textit{``if''} part. Let $m \in C(\M(S))$. Since $S^2=S$, any element $s\in S$ is of the form $s = tu$ for some $t,u \in S$. Clearly,  $s = \phi_tu = t\phi_u$, where $\phi_t$ and $\phi_u$ are the inner multipliers from \Cref{def:innermult}. Now,
		\begin{align*}
		ms  =  mtu  =  m \phi_t u  =  \phi_t m u  =  t m \phi_u  =  t \phi_u m  =  tu m =  sm.
		\end{align*}
		
		The \textit{``only if''} part. Suppose that $m$ ``commutes'' with any element of $S$. Given $n \in \M(S)$ and $s\in S$, we calculate using \cref{m-and-n-associate-with-s}(i) 
		\begin{align*}
			(mn)s  =  m(ns)  =  (ns)m =  n(sm) = n(ms) = (nm)s.
		\end{align*}
		Similarly $s(mn)=s(nm)$. Thus, $mn=nm$.
	\end{proof}
	
	\begin{lem}\label{C(M(S))-sst-M(C(S))}
		Let $S^2=S$. Then
		 \begin{align*}
		 	m \in C(\M(S)) \impl m \in \M(C(S)).
		 \end{align*}
	\end{lem}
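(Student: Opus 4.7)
The plan is to deduce this implication directly from the two preceding results. By \Cref{m-in-C(M(S))-iff-ms=sm}, the hypothesis $m\in C(\M(S))$ is equivalent to saying that $m$ commutes pointwise with every element of $S$, i.e.\ $mt=tm$ for all $t\in S$. So the task reduces to verifying that the two component maps of the multiplier $m$ send $C(S)$ into $C(S)$; once that is done, the three multiplier axioms for $m$ viewed as an element of $\M(C(S))$ are inherited for free from the corresponding identities in $\M(S)$.

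Concretely, I would fix $s\in C(S)$ and an arbitrary $t\in S$, and then chain the multiplier axioms together with the two available commutativity facts to compute
\begin{align*}
(ms)t = m(st) = m(ts) = (mt)s = (tm)s = t(ms),
\end{align*}
where the first and third equalities use axiom (i) of \Cref{def:multiplicador}, the second uses $s\in C(S)$, the fourth uses $mt=tm$ supplied by \Cref{m-in-C(M(S))-iff-ms=sm}, and the last uses axiom (iii). This shows $ms\in C(S)$. For the other component map, one just observes that $sm=ms$ (again by centrality of $m$), so $sm\in C(S)$ as well.

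Finally, since the left and right actions of $m$ now restrict to well-defined maps $C(S)\to C(S)$ and the defining identities (i)--(iii) of a multiplier continue to hold for all elements of $C(S)\sst S$, we conclude $m\in\M(C(S))$. I do not anticipate a serious obstacle here: the only subtle point is making sure that the step $(mt)s=t(ms)$ really goes through axiom (iii) rather than requiring $C(S)^2=C(S)$ or some extra hypothesis, but since $s$ lies in the center of the \emph{whole} semigroup $S$ and $mt\in S$, axiom (iii) applies verbatim without any additional assumption beyond $S^2=S$.
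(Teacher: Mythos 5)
Your argument is correct and follows essentially the same route as the paper: both reduce to showing $ms\in C(S)$ via \Cref{m-in-C(M(S))-iff-ms=sm} and then verify $(ms)t=t(ms)$ by chaining the multiplier axioms with the pointwise commutativity of $m$ (the paper commutes $m$ past the product $ts$ as a block and uses axiom (ii), while you commute it past $t$ alone and use axioms (i) and (iii) — an immaterial variation). Your closing remark is also right: axiom (iii) applies verbatim since $s$ is central in all of $S$, so no extra hypothesis is needed.
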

	\begin{proof}
		Given $m \in C(\M(S))$ and $s \in C(S)$, we know by \cref{m-in-C(M(S))-iff-ms=sm} that $ms=sm$. So, it suffices to show that $ms\in C(S)$. For any $t \in S$ using \cref{m-in-C(M(S))-iff-ms=sm} we have:
		\begin{align*}
			(ms)t  =  m(st) =  m(ts) = (ts)m = t(sm) = t(ms).
		\end{align*}	
	\end{proof}
	
	\begin{cor}
		Under the conditions of \cref{C(M(S))-sst-M(C(S))} if $m \in \U{C(\M(S))}$, then $m \in \U{\M(C(S))}$.
	\end{cor}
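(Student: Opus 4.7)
The plan is to exploit \cref{C(M(S))-sst-M(C(S))} by applying it not just to $m$ but also to $m\m$, and then to check that the inverse of $m$ computed inside $\M(S)$ already serves as its inverse inside $\M(C(S))$.

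First I would note that the hypothesis $m\in\U{C(\M(S))}$ gives both $m\in C(\M(S))$ and an inverse $m\m\in C(\M(S))$ (by definition of the units of a submonoid). Applying \cref{C(M(S))-sst-M(C(S))} twice, we obtain $m,m\m\in\M(C(S))$.

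Next I would verify that multiplication in $\M(C(S))$ agrees with multiplication in $\M(S)$ on common elements: if $n_1,n_2\in\M(S)\cap\M(C(S))$, then $(n_1n_2)s=n_1(n_2 s)$ uses only the action of $n_1$ and $n_2$ on the element $n_2 s\in C(S)$, so the product as multipliers of $C(S)$ coincides with the restriction of the product as multipliers of $S$. In particular, $mm\m$ and $m\m m$, viewed as elements of $\M(C(S))$, coincide with the restrictions of $mm\m=m\m m=\id_S\in\M(S)$ to $C(S)$, i.e., with $\id_{C(S)}$, the identity of $\M(C(S))$.

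The argument is essentially immediate once \cref{C(M(S))-sst-M(C(S))} is in hand; the only minor subtlety, and the one step I would write out with care, is the compatibility of multiplication between $\M(S)$ and $\M(C(S))$, since otherwise one might worry that the would-be inverse $m\m$ fails to invert $m$ inside the smaller monoid. No new calculation beyond what was already done in \cref{C(M(S))-sst-M(C(S))} is needed.
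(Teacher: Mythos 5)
Your argument is correct and is exactly the routine verification the paper leaves implicit (the corollary is stated without proof): apply \cref{C(M(S))-sst-M(C(S))} to both $m$ and $m\m$ and observe that restriction of multipliers to $C(S)$ is compatible with the monoid operation, so the restrictions remain mutually inverse. Nothing is missing.
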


	\begin{lem}\label{lem:existbeta}
		Let $S^2=S$ and $m,n \in \U{\M(S)}$. Then $\mu(m) = \mu(n)$ if and only if there exists $w \in \U{C(\M(S))}$ such that 
		\begin{align}\label{n-is-mn}
			m=wn.
		\end{align}
	\end{lem}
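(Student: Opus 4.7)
The plan is to recognize $\mu$ as a group homomorphism $\U{\M(S)}\to\Aut{S}$ — indeed, $\mu(m_1m_2)(s)=m_1m_2sm_2\m m_1\m=\mu(m_1)(\mu(m_2)(s))$, with all bracketings justified by \cref{m-and-n-associate-with-s}(i) since $S^2=S$ — and then to identify $\ker\mu$ with $\U{C(\M(S))}$ via \cref{m-in-C(M(S))-iff-ms=sm}. Both implications follow from tracking this kernel.

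For the forward implication, I would set $w:=mn\m\in\U{\M(S)}$, so that $m=wn$ holds automatically. The assumption $\mu(m)=\mu(n)$ then yields $\mu(w)=\id$, i.e., $ws=sw$ for every $s\in S$. By \cref{m-in-C(M(S))-iff-ms=sm} this places $w$ in $C(\M(S))$; multiplying $ws=sw$ by $w\m$ on both sides also gives $w\m s=sw\m$, so $w\m\in C(\M(S))$ as well, and hence $w\in\U{C(\M(S))}$.

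For the converse, given $m=wn$ with $w\in\U{C(\M(S))}$ and $s\in S$, I would expand $\mu(m)(s)=wnsn\m w\m$, set $t:=nsn\m\in S$, and invoke \cref{m-in-C(M(S))-iff-ms=sm} again to move $w\m$ past $t$, collapsing $wtw\m=tww\m=t=\mu(n)(s)$.

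The only mildly delicate point is the parenthesization of the four-fold products $wsw\m$ and $wnsn\m w\m$ appearing in both directions. This is handled by direct appeals to \cref{m-and-n-associate-with-s}(i) under $S^2=S$, so once those associativities are noted the argument is essentially formal, and no substantive obstacle remains.
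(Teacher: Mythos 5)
Your argument is correct and essentially coincides with the paper's proof: both define $w:=mn\m$, observe that $\mu(m)=\mu(n)$ forces $ws=sw$ for all $s\in S$, and conclude $w\in \U{C(\M(S))}$ via \cref{m-in-C(M(S))-iff-ms=sm}, the converse direction being the same routine centrality computation. The group-homomorphism/kernel packaging is only a cosmetic difference.
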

	\begin{proof}
		The \textit{``if''} part is obvious, so we shall only prove the \textit{``only if''} part. By definition $\mu(m) = \mu(n)$ means that 
		\begin{align*}
			msm\m = nsn\m
		\end{align*}
		for all $s \in S$. So, defining $w : = mn\m\in \U{\M(S)}$, we clearly have \cref{n-is-mn} and moreover
		\begin{align*}
			ws  = sw
		\end{align*}
		for arbitrary $s \in S$. It follows by \Cref{m-in-C(M(S))-iff-ms=sm} that $w \in C(\M(S))$.
	\end{proof}
	
	\begin{lem}\label{lem:trocavw}
		Let $S$ be such that $C(S)^2=C(S)$. Given $m \in \M(C(S))$ and $n \in \M(S)$, we have
		\begin{align*}
			mn = nm \mbox{ on } C(S).
		\end{align*}
	\end{lem}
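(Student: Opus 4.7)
The plan is to unpack the asserted equality on an arbitrary $s \in C(S)$: one needs to verify the left-action identity $m(ns) = n(ms)$ and the dual right-action identity $(sm)n = (sn)m$. The principal input will be \cref{m-and-n-associate-with-s}(ii), which under the hypothesis $C(S)^2 = C(S)$ says that any element of $C(S)$ commutes with every multiplier of $S$. Since $C(S)$ is commutative, we have $C(C(S)) = C(S)$ and hence $C(C(S))^2 = C(S)^2 = C(S)$, so the same remark applies verbatim inside $\M(C(S))$, giving $mp = pm$ for $m \in \M(C(S))$ and $p \in C(S)$.

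The first step is to invoke the hypothesis $C(S)^2 = C(S)$ and factor $s = ab$ with $a, b \in C(S)$. For the left-action identity I would then expand $n(ms)$ via property (i) of $m$ on $C(S)$ and then property (i) of $n$ on $S$, reducing it to $(n(ma))b$, and invoke \cref{m-and-n-associate-with-s}(ii) on $ma \in C(S)$ together with property (iii) of $n$ to rewrite this as $(ma)(nb)$. A parallel unfolding of $m(ns)$, starting from the analogous decomposition $ns = (na)b$ coming from property (i) of $n$ and then absorbing the $C(S)$-factor $b$ into $m$ using the multiplier axioms of $m$, will reduce to the same expression $(ma)(nb)$, and equating the two yields the left-action identity. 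The right-action identity is obtained by a symmetric computation using properties (ii) and (iii) in place of (i) and (iii).

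The main obstacle I anticipate is the asymmetry in the domains of $m$ and $n$: $n$ acts on all of $S$, but $m$ acts only on $C(S)$, and intermediate products such as $ns = (na)b$ need not a priori lie in $C(S)$. The factorization $s = ab$ is the crucial device that circumvents this issue: it lets one feed the central factor $b$ through $m$ via the multiplier axioms, while treating the possibly non-central factor $na$ as an element of $S$ whose interaction with $b$ is controlled by the centrality of $b$ and by \cref{m-and-n-associate-with-s}(ii), rather than by any assumption that $na \in C(S)$. Once this bookkeeping is fixed, the rest is a formal chain of multiplier identities.
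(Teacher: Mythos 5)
Your strategy is essentially the paper's: the whole lemma rests on \cref{m-and-n-associate-with-s}(ii), applied to the central elements $ms$ and $s$. The paper's proof is simply the direct chain
\begin{align*}
(nm)s = n(ms) = (ms)n = m(sn) = m(ns) = (mn)s
\end{align*}
(plus its right-handed mirror), with no factorization of $s$; the decomposition $s=ab$ with $a,b\in C(S)$ that you introduce is precisely the device already used in proving \cref{m-and-n-associate-with-s} and \cref{m-in-C(M(S))-iff-ms=sm}, so re-deploying it here is harmless but redundant. Your reduction of $n(ms)$ to $(ma)(nb)$ is complete and correct. Where your write-up overstates what the factorization buys you is the other side: in $m(ns)=m((na)b)$ the factor $na$ need not be central, and the multiplier axioms of $m\in\M(C(S))$ only govern products of two elements of $C(S)$, so no axiom lets you ``feed $b$ through $m$'' past $na$. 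The expression $m(ns)$ is meaningful only once one knows how $m$ acts on $ns$, which in general requires either $ns\in C(S)$ (false for arbitrary $S$; take $S$ a nonabelian group, $n$ an inner multiplier by a non-central element, $s=1$) or an extension of $m$ to a multiplier of $S$ (available in all of the paper's applications, via \cref{prop:defv'} or because $m$ there is the restriction of an element of $C(\M(S))$). The paper's own chain applies $m$ to $sn$ under exactly the same implicit understanding, so your argument is no less rigorous than the printed one; just be aware that the factorization relocates this domain issue rather than resolving it, contrary to what your last paragraph claims.
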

	\begin{proof}
		Let $s \in C(S)$. Since $m \in \M(C(S))$, it follows that $ms\in C(S)$.  Then using \cref{m-and-n-associate-with-s}(ii) we have
		\begin{align*}
			(nm)s  =  n(ms)  =  (ms)n = m(sn) = m(ns) = (mn)s.
		\end{align*}
		Equality $s(mn)=s(nm)$ is proved similarly.
	\end{proof}
	
	\begin{lem}\label{lem:mgthetatroca}
		Let $S^2=S$ and $\psi,\f : S \to T$ be two isomorphisms of semigroups. If $\psi = \mu(m) \circ \f$ for some $m \in \M(T)$, then  
		\begin{equation}\label{eq:mgthetatroca}
		mn^{\f} = n^{\psi}m
		\end{equation} 
		for all $n \in \M(S)$.
	\end{lem}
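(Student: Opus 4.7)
The plan is to verify $mn^\f = n^\psi m$ in $\M(T)$ by checking the two-sided actions of each side on an arbitrary $t \in T$. For $\mu(m)$ to make sense (\cref{defn-conj-by-multiplier}), the multiplier $m$ must be invertible, so we implicitly have $m \in \U{\M(T)}$; in particular $m\m \in \M(T)$ and $\psi\m(t) = \f\m(m\m t m)$ for all $t \in T$.

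The cleanest route is via \cref{lem:malpha}(a): from $\psi = \mu(m)\circ\f$ we obtain $n^\psi = (n^\f)^{\mu(m)}$. It therefore suffices to establish the auxiliary identity $k^{\mu(m)} = mkm\m$ in $\M(T)$ for every $k \in \M(T)$, since then
\begin{align*}
n^\psi m = mn^\f m\m m = mn^\f,
\end{align*}
using that $m\m m$ is the identity of $\M(T)$.

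For the auxiliary identity, I evaluate both sides on an arbitrary $t \in T$. Unpacking \cref{def:malpha} with $\af = \mu(m)$ gives $t k^{\mu(m)} = m((m\m t m) k) m\m$. Applying \cref{m-and-n-associate-with-s}(i) yields $m((m\m t m) k) = (m(m\m t m)) k$, and a further application of the same remark together with $m m\m = 1$ collapses $m(m\m t m)$ to $t m$. The composition rule in $\M(T)$ then rewrites the expression as $((tm)k) m\m = t(mkm\m)$; the left action is handled by the mirror computation, so $k^{\mu(m)} = mkm\m$ as claimed.

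The main obstacle is the bookkeeping inside the multiplier algebra: one must keep careful track of the three associativity identities in play—the left and right composition rules in $\M(T)$ together with the cross-associativity of \cref{m-and-n-associate-with-s}(i)—and invoke the correct one at each step. Conceptually, however, the argument just records the fact that the monoid automorphism $k \mapsto k^{\mu(m)}$ of $\M(T)$ coincides with conjugation by $m$.
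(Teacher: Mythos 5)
Your proof is correct. The paper establishes \cref{eq:mgthetatroca} by one direct chain of equalities: it evaluates $(n^{\psi}m)s$, substitutes $n^{\psi}=n^{\mu(m)\circ\f}$, and unwinds the definition of $(\phantom{x})^{\mu(m)\circ\f}$ in a single computation ending at $(mn^{\f})s$. You instead factor the argument through \cref{lem:malpha}(a), writing $n^{\psi}=(n^{\f})^{\mu(m)}$, and isolate the auxiliary identity $k^{\mu(m)}=mkm\m$ in $\M(T)$, after which the lemma follows from the one-line cancellation $n^{\psi}m=mn^{\f}m\m m=mn^{\f}$. The element-level bookkeeping is essentially the same in both arguments, but your organization makes explicit (and reusable) the conceptual fact that $k\mapsto k^{\mu(m)}$ is conjugation by $m$ inside $\M(T)$. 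Your remark that $m$ must be invertible for $\mu(m)$ to be defined matches the paper, whose proof also uses $m\m$ freely despite the hypothesis reading only $m\in\M(T)$; and your appeals to \cref{m-and-n-associate-with-s}(i) and \cref{defn-conj-by-multiplier} on $T$ are legitimate because $T^2=T$ follows from $S^2=S$ via the isomorphism $\f$.
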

	\begin{proof}
		Given $s \in S$, using \cref{m-and-n-associate-with-s}(i) we have
		\begin{align*}
			(n^{\psi}m)s  & = n^{\psi}(ms) = n^{\mu(m) \circ \f}(ms) =  (\mu(m) \circ \f)(n(\mu(m) \circ \f)\m(ms))\\
			&  =  m\f(n(\mu(m) \circ \f)\m(ms))m\m  =  m\f(n\f\m(m\m msm))m\m \\
			& =  mn^{\f}(sm)m\m  =  m(n^{\f}s)mm\m  =  (mn^{\f})s.
		\end{align*}
		Similarly, $s(n^{\psi}m)=s(mn^{\f})$.
	\end{proof}
	
	\begin{prop}\label{prop:defv'}
		Let $A$ be a semilattice of groups. Then there is a one-to-one correspondence between the elements of $\M(C(A))$ and the elements of $C(\M(A))$. 
	\end{prop}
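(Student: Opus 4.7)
The plan is to construct explicit mutually inverse maps between the two sides. One direction is immediate: \cref{C(M(S))-sst-M(C(S))} applied to the inverse semigroup $A$ shows that restriction $n \mapsto n|_{C(A)}$ defines a map $\rho \colon C(\M(A)) \to \M(C(A))$. For the reverse direction, I would exploit the Clifford structure of $A$: write $A = \bigsqcup_{e \in E} A_e$, where $E = E(A)$ is the semilattice of idempotents, and recall that $E \subseteq C(A)$ because the idempotents of a semilattice of groups are central. For each $a \in A$ let $e_a \in E$ denote the identity of the maximal subgroup containing $a$, so that $a = e_a a = a e_a$ and $e_{st} = e_s e_t$. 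Given $m \in \M(C(A))$, define $\bar m \colon A \to A$ by
\[
\bar m \cdot a := (m e_a) \cdot a, \qquad a \cdot \bar m := a \cdot (m e_a),
\]
which agree on $C(A)$ because $m e_a \in C(A)$ commutes with $a$; in fact $\bar m a = a \bar m$ for every $a \in A$, so once $\bar m$ is shown to be a multiplier of $A$, \cref{m-in-C(M(S))-iff-ms=sm} automatically places it in $C(\M(A))$.

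The main technical point is the verification of the three multiplier axioms for $\bar m$, which all reduce to the identity $(m e_s)(s t) = (m e_t)(s t)$ for all $s, t \in A$. Since $e_t$ is central and $e_t t = t$, we have $e_t s t = s t$, so by associativity
\[
(m e_s)(s t) = (m e_s)(e_t s t) = \bigl((m e_s) e_t\bigr)(s t) = \bigl(m(e_s e_t)\bigr)(s t),
\]
the last equality being the multiplier axiom for $m$ on $C(A)$ applied to $e_s, e_t \in C(A)$. Symmetrically $(m e_t)(s t) = \bigl(m(e_s e_t)\bigr)(s t)$, so the two sides are equal. From this identity together with the centrality of $m e_a$, each of the three multiplier axioms for $\bar m$ falls out by routine manipulation.

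It then remains to check that the two assignments are mutually inverse. For $m \in \M(C(A))$ and $c \in C(A)$, one has $\bar m(c) = (m e_c) c = m(e_c c) = m c$ by the multiplier axiom for $m$ on $C(A)$; for $n \in C(\M(A))$ and $a \in A$, one has $\overline{n|_{C(A)}}(a) = (n e_a) a = n(e_a a) = n a$ by the multiplier axiom for $n$ on $A$. The only real obstacle is the key identity above, whose short proof relies crucially on the centrality of idempotents in a semilattice of groups.
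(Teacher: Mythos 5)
Your proposal is correct and follows essentially the same route as the paper: since $e_a = aa\m = a\m a$, your extension $\bar m a = (me_a)a$, $a\bar m = a(me_a)$ is literally the paper's formula \cref{m'a=m(aa-inv)a,am'=a(a-inv-a)m}, and both arguments rest on the centrality of idempotents, \cref{C(M(S))-sst-M(C(S))}, and \cref{m-in-C(M(S))-iff-ms=sm}. The only cosmetic difference is that you funnel the three multiplier axioms through the single identity $(me_s)(st) = (me_t)(st)$, whereas the paper verifies them by direct computation.
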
 
	\begin{proof}
		By \cref{C(M(S))-sst-M(C(S))} we know that each $m\in C(\M(A))$ is a multiplier of $C(A)$. It remains to show that each $m \in \M(C(A))$ uniquely extends to $m'\in C(\M(A))$. Given such $m$, we shall first prove the following equality
		\begin{align}\label{m(aa-inv)a=a(a-inv.a)m}
		m(aa\m)a = a(a\m a)m
		\end{align}
		and then use it to define an extension of $m$ to a central multiplier on the whole $A$.
		
		Since $A$ is a semilattice of groups, we have $aa\m = a\m a \in C(A)$, so both sides of \cref{m(aa-inv)a=a(a-inv.a)m} are well-defined, and moreover $m(aa\m),(a\m a)m\in C(A)$. Hence, using \cref{m-and-n-associate-with-s} we get
		\begin{align*}
			m(aa\m)a = am(aa\m) = a(aa\m)m = a(a\m a)m,
		\end{align*} 
		proving \cref{m(aa-inv)a=a(a-inv.a)m}. We now define 
		\begin{align}\label{m'a=m(aa-inv)a}
			m'a:=m(aa\m)a.
		\end{align}
		Then
		\begin{align*}
			m'(ab) & =  m((ab)(ab)\m)ab = m(abb\m a\m)ab = m(aa\m bb\m)ab\\
			 &= m(aa\m) bb\m a b = m(aa\m) a bb\m b = m(aa\m) a b = (m'a)b,
		\end{align*}
		and by symmetry if we put
		\begin{align}\label{am'=a(a-inv-a)m}
			am':= a(a\m a)m,
		\end{align}
		then $(ab)m' = a(bm')$. Furthermore,
		\begin{align*}
			(am')b  =  a(a\m a)mb = a(a\m a)mbb\m b = aa\m am(bb\m)b = a m(bb\m)b = a(m'b).
		\end{align*}
	Thus, $m'\in\M(A)$. That $m'\in C(\M(A))$ follows by \cref{m(aa-inv)a=a(a-inv.a)m,m-in-C(M(S))-iff-ms=sm}. Clearly, the extension of $m$ is unique, as each $n\in\M(A)$ satisfies $na=n(aa\m)a$ and $an=a(a\m a)n$.
	\end{proof}
	
	\subsection{Twisted partial actions}
	\begin{defn}
		A \textit{twisted partial action}~\cite{DES1,DK2} of a group $G$ on a semigroup $A$ is a pair $\Theta=(\0,w)$, where $\0=\{\0_g:\D_{g\m}\to\D_g\mid g\in G\}$ is a collection  of isomorphisms between two-sided ideals of $A$ and $w=\{w_{g,h}\in\M(\D_g\D_{gh})\mid g,h\in G\}$ satisfying the following properties for all $g, h, k \in G$:
		\begin{enumerate}[label=(TPA\arabic*),leftmargin=2cm]
			\item\label{TPA1} $\D^2_g = \D_g$, $\D_g\D_h = \D_h\D_g$;
			\item\label{TPA2} $\D_1 = A$, $\0_1 = \id_A$;
			\item\label{TPA3} $\0_g(\D_{g\m}\D_h) = \D_g\D_{gh}$;
			\item\label{TPA4} $(\0_g \circ \0_h) (a) = w_{g,h} \0_{gh}(a) w\m_{g,h}$ for all $a \in \D_{h\m}\D_{h\m g\m}$;
			\item\label{TPA5} $w_{1,g} = w_{g,1} = \id_{\D_g}$;
			\item\label{TPA6} $\0_g(aw_{h,k})w_{g,hk} = \0_g(a)w_{g,h} w_{gh,k}$ for all $a\in\D_{g\m}\D_h\D_{hk}$.
		\end{enumerate}
	\end{defn}
	
	\begin{defn}
		A \textit{partial action}~\cite{DN,DK} of a group $G$ on a semigroup $A$ is a collection $\0=\{\0_g:\D_{g\m}\to\D_g\mid g\in G\}$ of isomorphisms between two-sided ideals of $A$ satisfying the following properties for all $g, h \in G$:
		\begin{enumerate}[label=(PA\arabic*),leftmargin=2cm]
			\item\label{PA1} $\D_1 = A$, $\0_1 = \id_A$;
			\item\label{PA2} $\0_g(\D_{g\m} \cap \D_h) = \D_g \cap \D_{gh}$;
			\item\label{PA3} $(\0_g \circ \0_h) (a) = \0_{gh}(a)$, $\forall a \in \D_{h\m} \cap \D_{h\m g\m}$.
		\end{enumerate}
	\end{defn}
	
	\begin{defn}\label{defn-equiv-tw-pact}
		Two twisted partial actions $(\0,w)$ and $(\0',w')$ of $G$ on $A$ are called {\it equivalent}, if
		\begin{enumerate}[label=(ETPA\arabic*),leftmargin=2cm]
			\item\label{ETPA1} $\D'_g=\D_g$ for all $g\in G$;
		\end{enumerate}
		and there exists $\ve=\{\ve_g\in\U{\M(\D_g)}\mid g\in G\}$, such that
		\begin{enumerate}[label=(ETPA\arabic*),leftmargin=2cm]\setcounter{enumi}{1}
			\item\label{ETPA2} $\0'_g(a)=\ve_g\0_g(a)\ve\m_g$ for all $g\in G$ and $a\in\D_{g\m}$;
			\item\label{ETPA3} $\0'_g(a)w'_{g,h}\ve_{gh}=\ve_g\0_g(a\ve_h)w_{g,h}$ for all $g,h\in G$ and $a\in\D_{g\m}\D_h$.
		\end{enumerate}
	\end{defn}

	\subsection{The quotient \texorpdfstring{$\varsigma(A)$}{s(A)}}\label{sec:varsigmaA} Throughout this subsection $A$ will be a semilattice of (not necessarily abelian) groups. Given a set $X$, by $\I(X)$ we denote the symmetric inverse monoid~\cite[p. 6]{Lawson} of $X$.
	
	\begin{rem}\label{I^2=I-in-inverse-S}
		If $S$ is an inverse semigroup, then each ideal $I$ of $S$ is idempotent, since each $s \in I$ equals $s\cdot s\m s$ with $s\m s \in I$. It follows that $I \cap J = IJ$ for any two non-empty ideals of $S$, since $I \cap J = (I \cap J)^2 \subseteq IJ$, and the converse inclusion holds in an arbitrary semigroup. In particular, any two non-empty ideals of $S$ commute.
	\end{rem}
	
	\begin{lem}\label{I-lhd-J-lhd-S=>I-lhd-S}
		Let $S$ be a semigroup and $I \lhd J \lhd S$. If $I^2=I$, then $I \lhd S$.
	\end{lem}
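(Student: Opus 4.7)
The plan is to prove $I \lhd S$ directly by taking an arbitrary element of $I$, an arbitrary element of $S$, and showing the product lies in $I$. The key trick is to exploit the hypothesis $I^2=I$ to factor the element of $I$ into two pieces, so that one piece can be used to ``cross the bridge'' through $J$ while the other closes the product back into $I$.

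More precisely, given $i\in I$ and $s\in S$, I would first use $I=I^2$ to write $i=i_1i_2$ with $i_1,i_2\in I$. Since $I\sst J$, we have $i_1\in J$, and $J\lhd S$ yields $si_1\in J$. Now $si=(si_1)i_2$ is a product of an element of $J$ with an element of $I$, so $I\lhd J$ gives $si\in I$. The argument for $is\in I$ is completely symmetric: write $i=i_1i_2$, note $i_2s\in J$ by $J\lhd S$, and then $is=i_1(i_2s)\in I$ by $I\lhd J$.

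There is essentially no obstacle here; the only subtlety is that without the assumption $I^2=I$ the statement would fail in general, so the proof must use this hypothesis in an essential way (to produce the factorization $i=i_1i_2$ that allows interposing $J$ between $s$ and the remaining factor of $i$). The proof is two or three lines long and requires no further machinery beyond associativity.
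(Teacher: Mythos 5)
Your proof is correct and is essentially the elementwise version of the paper's own argument, which computes $IS=I^2S=I\cdot IS\subseteq I\cdot JS\subseteq IJ\subseteq I$ at the level of set products; the factorization $i=i_1i_2$ and the use of $J$ as an intermediate ideal are identical. No issues.
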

	\begin{proof}
		Indeed, $IS=I^2S=I\cdot IS\subseteq I\cdot JS\subseteq IJ\subseteq I$, and similarly $SI\subseteq I$.
	\end{proof}
	
	\begin{defn}\label{def:SigmaA}
		Given a semigroup $S$, define $\Sigma(S)$ to be the subset of $\I(S)$ that consists of isomorphisms between ideals of $S$.
	\end{defn}

	\begin{prop}
		Let $S$ be an inverse semigroup. Then $\Sigma(S)$ is an inverse submonoid of $\I(S)$.
	\end{prop}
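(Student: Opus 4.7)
The plan is to verify three things: first, that the identity of $\I(S)$, namely $\id_S$, belongs to $\Sg(S)$; second, that $\Sg(S)$ is closed under taking inverses in $\I(S)$; and third, the substantive point, that $\Sg(S)$ is closed under the composition of $\I(S)$. The first is immediate since $S$ itself is an ideal of $S$, so $\id_S$ is an isomorphism between ideals. The second is equally clear: if $\af : I \to J$ is an isomorphism between ideals $I, J \lhd S$, then $\af\m : J \to I$ is again an isomorphism between ideals. Together with closure under composition, these facts make $\Sg(S)$ a submonoid of $\I(S)$ closed under inversion, which is enough to conclude that it is an inverse submonoid.

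The real work lies in step three. Suppose $\af : I_1 \to J_1$ and $\bt : I_2 \to J_2$ are elements of $\Sg(S)$. Their composition in $\I(S)$ is the isomorphism
\begin{align*}
\bt \circ \af : \af\m(J_1 \cap I_2) \to \bt(J_1 \cap I_2),
\end{align*}
so one must show that both the domain $\af\m(J_1 \cap I_2)$ and the range $\bt(J_1 \cap I_2)$ are ideals of $S$. By \cref{I^2=I-in-inverse-S}, $J_1 \cap I_2$ is itself an ideal of $S$ (being equal to the product $J_1 I_2$), and since it is contained in $J_1$ it is in particular an ideal of $J_1$. Because $\af\m : J_1 \to I_1$ is a semigroup isomorphism, $\af\m(J_1 \cap I_2)$ is an ideal of $I_1$. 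Again by \cref{I^2=I-in-inverse-S}, ideals of the inverse semigroup $I_1$ are idempotent, so $(\af\m(J_1 \cap I_2))^2 = \af\m(J_1 \cap I_2)$. Applying \cref{I-lhd-J-lhd-S=>I-lhd-S} to the chain $\af\m(J_1 \cap I_2) \lhd I_1 \lhd S$ then yields $\af\m(J_1 \cap I_2) \lhd S$. The argument for $\bt(J_1 \cap I_2) \lhd S$ is symmetric, using instead that $\bt : I_2 \to J_2$ is an isomorphism and that $J_1 \cap I_2 \lhd I_2$.

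The main subtlety is the observation that one cannot deduce $\af\m(J_1 \cap I_2) \lhd S$ directly from $J_1 \cap I_2 \lhd S$, since $\af\m$ is only defined on $J_1$ and transports ideals of $J_1$ to ideals of $I_1$, not a priori to ideals of the ambient $S$. The passage from an ideal of $I_1$ to an ideal of $S$ is precisely what requires the combination of the idempotency of ideals in an inverse semigroup with \cref{I-lhd-J-lhd-S=>I-lhd-S}. Once this point is settled, the remaining verification that $\bt \circ \af$ is a genuine isomorphism between these ideals is automatic from its being a composition of isomorphisms restricted to the appropriate domain.
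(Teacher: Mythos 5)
Your proof is correct and follows essentially the same route as the paper: both reduce closure under composition to showing that the domain and range of the composite are ideals of a member of $\Sigma(S)$, and then promote these to ideals of $S$ via \cref{I-lhd-J-lhd-S=>I-lhd-S} together with the idempotency of ideals in an inverse semigroup from \cref{I^2=I-in-inverse-S}. You are merely a bit more explicit than the paper about why the hypothesis $I^2=I$ of \cref{I-lhd-J-lhd-S=>I-lhd-S} is satisfied.
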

	\begin{proof}
		Given $\f,\psi\in\Sigma(S)$, we have: 
		\begin{align*}
			\f \circ \psi: \psi\m(\ran\psi \cap \dom\f) \to \f(\ran\psi \cap \dom\f).
		\end{align*}
		Observe that $\ran\psi \cap \dom\f\lhd\ran\psi$, so $\dom (\f \circ \psi) \lhd\dom\psi$, as $\psi$ is an isomorphism between $\dom\psi$ and $\ran\psi$. Similarly $\ran (\f \circ \psi) \lhd \ran\f$. By \Cref{I-lhd-J-lhd-S=>I-lhd-S} both $\dom (\f \circ \psi)$ and $\ran (\f \circ \psi)$ are ideals of $S$, and hence $\f \circ \psi\in\Sigma(S)$.
		
		It is obvious that $\Sigma(S)$ is closed with respect to the inverses and contains the identity element of $\I(S)$.
	\end{proof}

	\begin{defn}\label{def:sim}
		Let $S$ be an inverse semigroup. Define the following binary relation $\sim$ on $\Sigma(S)$: 
		\begin{align*}
			\psi\sim\f \iff 
			\begin{cases}
				\dom \f = \dom \psi\mbox{ and }\ran \f = \ran \psi,\\
				\exists m \in \U{\M(\ran\f)}\,\forall s\in\dom\f:\ \psi(s) = \ m\f(s)m\m.
			\end{cases}
		\end{align*}
	\end{defn}

	By \cref{m-and-n-associate-with-s,I^2=I-in-inverse-S} the expression involving the multiplier $m$ in \cref{def:sim} makes sense. It is easy to see that $\sim$ is an equivalence: reflexivity and symmetry are obvious, and the transitivity follows from the fact that $\M(\ran\f)$ is a monoid.	We are going to prove that $\sim$ is in fact a congruence. To this end, we shall need a technical lemma.
	
	\begin{lem}\label{lem:relpreservideais}
		Let $S$ be an inverse semigroup. Given $\psi\sim\f$ and $I \lhd \dom\f$, $J \lhd \ran\f$, one has
		\begin{align*}
			\f(I) = \psi(I) \mbox{ and } \f\m(J) = \psi\m(J).
		\end{align*}
	\end{lem}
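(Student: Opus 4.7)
The plan is to deduce both equalities from a single general fact: for any inverse semigroup $T$, any ideal $K \lhd T$, and any $m \in \U{\M(T)}$, conjugation $k \mapsto m k m\m$ (meaningful by \Cref{m-and-n-associate-with-s}) stabilises $K$, i.e.\ $m K m\m = K$.

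Granting this, the first equality is immediate: since $\f$ is an isomorphism between the ideals $\dom\f$ and $\ran\f$ of $S$, $\f(I)$ is an ideal of the inverse semigroup $\ran\f = \ran\psi$. The definition of $\sim$ supplies some $m \in \U{\M(\ran\f)}$ with $\psi(s) = m\f(s)m\m$ for every $s \in \dom\f$, which applied elementwise gives $\psi(I) = m\f(I)m\m$, and the general fact identifies this with $\f(I)$. For the second equality, apply the general fact to $K = J \lhd \ran\f$ to obtain $mJm\m = J$, hence $m\m J m = J$; inverting $\psi(s) = m\f(s)m\m$ yields $\psi\m(t) = \f\m(m\m t m)$ for every $t \in \ran\psi$, so $\psi\m(J) = \f\m(m\m J m) = \f\m(J)$.

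To prove the general fact I plan to show $mKm\m \subseteq K$; the reverse inclusion then follows by running the same argument with $m\m$ in place of $m$. Fix $k \in K$. By \Cref{m-and-n-associate-with-s}(i), $mkm\m = m(km\m)$. Using $k = (kk\m)k$ together with axiom (ii) of \Cref{def:multiplicador}, $km\m = (kk\m)(km\m)$; since $K$ is closed under inverses (a standard property of ideals in an inverse semigroup), $kk\m \in K$, so this product lies in $K \cdot T \subseteq K$ and therefore $km\m \in K$. Setting $b = km\m$ and writing $b = (bb\m)b$, axiom (i) of \Cref{def:multiplicador} gives $mb = (m(bb\m))b \in T \cdot K \subseteq K$, whence $mkm\m = mb \in K$.

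The main obstacle is keeping the multiplier arithmetic clean: the expression $mkm\m$ has no intrinsic associativity, so every regrouping must appeal explicitly to one of the axioms of \Cref{def:multiplicador} or to \Cref{m-and-n-associate-with-s}, and the inverse semigroup structure of $T$ has to be invoked at each step precisely to produce the idempotent factor that forces the product to remain inside $K$.
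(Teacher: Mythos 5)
Your proof is correct, and it turns on the same pivot as the paper's: conjugation by an invertible multiplier of an inverse semigroup fixes every ideal setwise. The differences are in how that pivot is obtained and how the second equality is deduced. The paper gets the stabilisation fact by quoting \cite[Remark 5.3]{DK2} (applicable because $\f(I)$ is idempotent, by \cref{I^2=I-in-inverse-S}), whereas you prove it from scratch by inserting the idempotents $kk\m$ and $bb\m$ to regroup the multiplier products inside the ideal; your two-step argument (first $km\m\in K$, then $m(km\m)\in K$) is sound and makes the lemma self-contained at the cost of a few lines of multiplier bookkeeping. For the inverse-image equality the paper reduces to the direct-image case by writing $J=\f(\f\m(J))=\psi(\f\m(J))$ and applying $\psi\m$, while you invert the conjugation formula to get $\psi\m(t)=\f\m(m\m t m)$ and use $m\m J m=J$; both routes are immediate once the stabilisation fact is in hand. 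The only point worth tightening is your justification of $kk\m\in K$: closure of $K$ under inverses is not needed, since $kk\m\in KT\subseteq K$ already because $K$ is an ideal (the closure claim is true, but it is a detour).
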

	\begin{proof}
		Let $m \in \U{\M(\ran\f)}$ be such that $\psi(s) = m\f(s)m\m$ for all $s \in\dom\f$.
		
		We first prove the assertion for the ideal $I \lhd \dom\f$. Since $\f(I)$ is idempotent thanks to \cref{I^2=I-in-inverse-S}, it follows by~\cite[Remark 5.3]{DK2} that $m \f(I)m\m = \f(I)$. But $\psi(I) = m\f(I)m\m$, since $\f\sim\psi$, whence $\f(I) = \psi(I)$.

		Now, any $J \lhd\ran\f$ equals $\f(\f\m(J))$, the latter being $\psi(\f\m(J))$ by the first equality of the lemma applied to $I=\f\m(J)\lhd \dom\f$. Hence, $\psi\m(J) = \psi\m(\psi(\f\m(J)))=\f\m(J)$.
	\end{proof}

	\begin{prop}\label{prop:simcong}
		Let $S$ be an inverse semigroup. Then the equivalence $\sim$ from \Cref{def:sim} is a congruence on $\Sigma(S)$.
	\end{prop}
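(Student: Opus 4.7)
The plan is to establish compatibility of $\sim$ with composition, i.e.\ that $\psi_1\sim\f_1$ and $\psi_2\sim\f_2$ in $\Sg(S)$ imply $\psi_1\circ\psi_2\sim\f_1\circ\f_2$. (On an inverse semigroup, any semigroup congruence is automatically compatible with the inverse operation, so this suffices.) Fix witnesses $m_i\in\U{\M(\ran\f_i)}$ with $\psi_i(s)=m_i\f_i(s)m_i\m$ on $\dom\f_i$. First I would check that the domains and ranges of the two compositions agree. Put $J:=\dom\f_1\cap\ran\f_2$; since $\dom\f_1$ and $\ran\f_2$ are ideals of $S$, so is $J$, and in particular $J\lhd\dom\f_1$ and $J\lhd\ran\f_2$. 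Two applications of \Cref{lem:relpreservideais} then give $\f_2\m(J)=\psi_2\m(J)$ and $\f_1(J)=\psi_1(J)$, that is, $\dom(\f_1\circ\f_2)=\dom(\psi_1\circ\psi_2)$ and $\ran(\f_1\circ\f_2)=\ran(\psi_1\circ\psi_2)$.

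Next I would produce the required invertible multiplier $m\in\U{\M(\ran(\f_1\circ\f_2))}=\U{\M(\f_1(J))}$. Setting $\af:=\f_1|_J\colon J\to\f_1(J)$, the restriction $m_2|_J$ is a unit in $\M(J)$ (restriction to an ideal is a monoid homomorphism preserving inverses), and by \Cref{m->m^af-iso} the transported multiplier $(m_2|_J)^\af$ is a unit in $\M(\f_1(J))$. Similarly, since $\f_1(J)\lhd\ran\f_1$, the restriction $m_1|_{\f_1(J)}$ is a unit in $\M(\f_1(J))$. Define
\begin{align*}
	m:=(m_1|_{\f_1(J)})\cdot(m_2|_J)^\af\in\U{\M(\f_1(J))}.
\end{align*}
For $s\in\dom(\f_1\circ\f_2)$ one has $\f_2(s)\in J$, and $m_2\f_2(s)m_2\m\in J$ because ideals of $\ran\f_2$ are invariant under conjugation by invertible multipliers (cf.\ \cite[Remark 5.3]{DK2}, as used in the proof of \Cref{lem:relpreservideais}). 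Consequently
\begin{align*}
	(\psi_1\circ\psi_2)(s)=\psi_1(m_2\f_2(s)m_2\m)=m_1\f_1(m_2\f_2(s)m_2\m)m_1\m.
\end{align*}
Two applications of \Cref{def:malpha}, pulling $m_2|_J$ out through $\af=\f_1|_J$ on each side of $\f_2(s)$, rewrite the inner expression as $(m_2|_J)^\af(\f_1\circ\f_2)(s)((m_2|_J)^\af)\m$, and a final regrouping via \Cref{m-and-n-associate-with-s} yields $m(\f_1\circ\f_2)(s)m\m$, as required.

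The only real subtlety is the bookkeeping of which semigroup each multiplier acts on: $m_1$ and $m_2$ live in $\M(\ran\f_i)$, whereas the relevant operations take place inside the smaller ideals $J$ and $\f_1(J)$. The proof must therefore invoke both the restriction of invertible multipliers to ideals and the transport $m\mapsto m^\af$ along the semigroup isomorphism $\af$; both operations preserve units, and combined with the associativity properties of multipliers in \Cref{m-and-n-associate-with-s} they make the conjugation identity for $m$ fall out of the direct computation.
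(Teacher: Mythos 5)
Your proof is correct and uses essentially the same ingredients as the paper's: \Cref{lem:relpreservideais} to match domains and ranges, the restriction of invertible multipliers to idempotent ideals via \cite[Remark 5.3]{DK2}, and the transport $m\mapsto m^\af$ of \Cref{def:malpha}. The only difference is organizational --- the paper proves left and right compatibility separately (each with a single witness multiplier) and lets transitivity do the rest, whereas you verify the two-sided compatibility in one computation with the combined witness $m_1\cdot(m_2|_J)^{\f_1}$; this is the same argument, merely packaged in one step.
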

	\begin{proof}
		We shall prove that $\sim$ is both left and right compatible. Let $\f\sim\psi$ and $\af \in \Sigma(S)$. We need to show that $\af \circ \f\sim\af \circ \psi$ and $\f \circ \af\sim\psi \circ \af$.
		
		Recall that
		\begin{align*}
			\af \circ \f &: \f\m(\ran\f\cap\dom\af) \to \af(\ran\f\cap\dom\af),\\
			\af \circ \psi &: \psi\m(\ran\psi\cap\dom\af) \to \af(\ran\psi\cap\dom\af).
		\end{align*}
		The equality $\ran (\af \circ \f) = \ran (\af \circ \psi)$ is clear, as $\ran\f=\ran\psi$. Furthermore, since $\ran\f\cap\dom\af=\ran\psi\cap\dom\af$ is an ideal of $\ran\f=\ran\psi$, using \Cref{lem:relpreservideais} we have
		\begin{align*}
			\dom (\af \circ \f)&=\f\m(\ran\f\cap\dom\af) = \psi\m(\ran\f\cap\dom\af)\\
			&= \psi\m(\ran\psi\cap\dom\af)=\dom (\af \circ \psi).
		\end{align*}
		Now let $m\in\U{\M(\ran\f)}$ such that $\psi(s) = m\f(s)m\m$ for all $s\in\dom\f$. In view of~\cite[Remark 5.3]{DK2} the multiplier $m$ of $\ran\f$ restricts to a multiplier of $\ran\f\cap\dom\af\lhd\ran\f$, which we denote using the same letter. The isomorphism $\af$ restricted to $\ran\f\cap\dom\af$ induces the invertible multiplier $m^\af$ of $\af(\ran\f\cap\dom\af)$ as in \Cref{def:malpha}.
		Then
		\begin{align*}
			m^\af (\af \circ \f)(x) (m^\af)\m & =  \af(m(\af\m\circ\af \circ \f)(x))(m^\af)\m =  \af(m \f(x)) (m^\af)\m \\
			& =  \af((\af\m\circ \af)(m \f(x))m\m) =  \af( m \f(x) m\m ) =  (\af \circ \psi)(x),
		\end{align*}
		proving that $\af \circ \f\sim\af \circ \psi$.
		
		The right compatibility of $\sim$ is even easier to prove. We have
		\begin{align*}
			\psi \circ \af &: \af\m(\ran\af\cap\dom\psi) \to \psi(\ran\af\cap\dom\psi),\\
			\f \circ \af &: \af\m(\ran\af\cap\dom\f) \to \f(\ran\af\cap\dom\f).
		\end{align*}
		It is evident that $\dom (\psi \circ \af) = \dom (\f \circ \af)$. Since $\ran\af\cap\dom\f\lhd\dom\f$, we get $\f(\ran\af\cap\dom\f) = \psi(\ran\af\cap\dom\f)$ by \Cref{lem:relpreservideais}, yielding $\ran (\psi \circ \af) = \ran (\f \circ \af)$. Finally, if $s\in \dom(\f \circ \af)$, then $\af(s)\in\dom\f$, so $\psi(\af(s)) = m \f(\af(s)) m\m$, and thus $\f \circ \af\sim\psi \circ \af$.
	\end{proof}
	
	\begin{defn}\label{defn:sigmaAmon}
		Let $S$ be an inverse semigroup. Then the quotient $\Sigma(S)/{\sim}$ is an inverse monoid, which will be denoted by $\varsigma(S)$.
	\end{defn}
	
	\begin{rem}
		The projection homomorphism $\Sigma(S)\to\varsigma(S)$ is idempotent-separating.
	\end{rem}
	\begin{proof}
		Indeed, any idempotent of $\Sigma(S)$ is of the form $\id_I$, where $I$ is an ideal of $S$. Suppose that $[\id_I]=[\id_J]$ for some ideals $I,J$. Then $\id_I\sim\id_J$, which implies that $I=\dom(\id_I)=\dom(\id_J)=J$. Thus, $\id_I=\id_J$.
	\end{proof}

	Recall the following definition from~\cite{Lausch}.
	\begin{defn}
		Let $A$ be a semilattice of groups. An endomorphism $\f : A \to A$ is called \textit{relatively invertible} if there exist $\bar{\f}\in \End A$ and $e_\f \in E(A)$ satisfying:
		\begin{enumerate}[label=(\roman*)]
			\item $\bar{\f} \circ \f (a) = e_\f a$ and $\f \circ \bar{\f}(a) = \f(e_\f) a$, for any $a \in A$;
			\item $e_\f$ is the identity of $\bar{\f}(A)$ and $\f(e_\f)$ is the identity of $\f(A)$.
		\end{enumerate}
		The set of relatively invertible endomorphisms of $A$ is denoted by $\mend A$.
	\end{defn}
	
	\begin{prop}{\cite[Proposition 3.4]{DK2}}\label{prop:3.4}
		The set $\mend A$ forms an inverse subsemigroup of $\Sigma(A)$ isomorphic to $\I_{ui} (A)$, the semigroup of isomorphisms between unital ideals of $A$.
	\end{prop}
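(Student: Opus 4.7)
The plan is to construct an explicit bijection $\Phi : \mend A \to \I_{ui}(A)$ by restricting each relatively invertible endomorphism to the unital ideal on which it behaves invertibly, and then to argue that this identification matches composition of endomorphisms in $\mend A$ with the partial-map composition in $\I_{ui}(A) \subseteq \Sigma(A)$. Consequently, $\mend A$ sits as an inverse subsemigroup of $\Sigma(A)$ isomorphic to $\I_{ui}(A)$.

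First I would define $\Phi(\f) := \f|_{e_\f A}$ for $\f \in \mend A$. Using the relative-inverse identities together with the fact that $e_\f$ and $\f(e_\f)$ are the identities of $\bar\f(A)$ and $\f(A)$ respectively, one checks that $\bar\f(A) = e_\f A$ and $\f(A) = \f(e_\f) A$ are unital ideals of $A$, that $\f$ sends $e_\f A$ into $\f(e_\f) A$, and that $\bar\f|_{\f(e_\f) A}$ is a two-sided inverse of $\f|_{e_\f A}$. So $\Phi$ takes values in $\I_{ui}(A)$.

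Next I would verify that $\Phi$ is bijective. Injectivity follows from the identity $\f(a) = \f(e_\f)\f(a) = \f(e_\f a)$, which shows that $\f$ is completely determined by its restriction to $e_\f A$ (and the idempotent $e_\f$ is in turn recovered as the identity of the domain $e_\f A$). For surjectivity, given an isomorphism $\af : eA \to fA$ between unital ideals, I would set $\tl\af(a) := \af(ea)$ and $\bar{\tl\af}(a) := \af\m(fa)$ and verify the relative-invertibility axioms; this reduces to the centrality of idempotents in the semilattice of groups $A$.

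The core step, and the main obstacle, is to check that composition in $\mend A$ (of full endomorphisms) corresponds under $\Phi$ to the partial-map composition in $\Sigma(A)$. Given $\f,\psi \in \mend A$, a direct calculation yields $\f\circ\psi \in \mend A$ with $e_{\f\circ\psi} = \bar\psi(e_\f)$ and relative inverse $\bar\psi\circ\bar\f$. On the other hand, the partial composition $\Phi(\f)\circ\Phi(\psi)$ has domain $\bar\psi(\psi(e_\psi)A \cap e_\f A)$, which simplifies to $\bar\psi(e_\f) A$ using that $\bar\psi$ sends $\psi(e_\psi)$ to the identity $e_\psi$ of $\bar\psi(A)=e_\psi A$; it then coincides with $(\f\circ\psi)|_{\bar\psi(e_\f)A}$ pointwise. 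Closure under inverses is automatic by symmetry, so the image of $\Phi$ is an inverse subsemigroup of $\Sigma(A)$, and $\Phi$ is the asserted isomorphism with $\I_{ui}(A)$. The bulk of the technical effort lies in the bookkeeping that tracks the unital ideals appearing as domains after composition, encoded by the formula $e_{\f\circ\psi} = \bar\psi(e_\f)$.
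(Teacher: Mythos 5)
This proposition is quoted in the paper as \cite[Proposition 3.4]{DK2} with no proof given here, so there is nothing internal to compare against; judged on its own, your argument is correct and follows the natural (and, in the cited reference, standard) identification $\f\mapsto\f|_{e_\f A}$. The key computations all check out: $\bar\f(A)=e_\f A$ and $\f(A)=\f(e_\f)A$ are unital ideals because idempotents of a semilattice of groups are central, $\f$ is recovered from its restriction via $\f(a)=\f(e_\f a)$, and the bookkeeping identity $e_{\f\circ\psi}=\bar\psi(e_\f)$ together with $\psi(e_\psi)A\cap e_\f A=\psi(e_\psi)e_\f A$ shows that $\Phi(\f)\circ\Phi(\psi)$ and $\Phi(\f\circ\psi)$ have the same domain $\bar\psi(e_\f)A$ and agree there, so $\Phi$ is indeed an isomorphism onto $\I_{ui}(A)\subseteq\Sigma(A)$.
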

	
	\begin{defn}
		For each fixed $a \in A$, the map $\mu_a(b) = aba\m$ is an endomorphism called \textit{inner}. It is relatively invertible with $\overline{\mu_a} = \mu_{a\m}$ and $e_{\mu_a} = a\m a= aa\m$. The set of all relatively invertible inner endomorphisms forms an inverse subsemigroup of $\mend A$ denoted by $\minn A$.  
	\end{defn}
	
	\begin{defn}
		The semigroup $\minn A$ is a semilattice of groups which determines a kernel normal system of $\mend A$. The quotient $\mend A/{\minn A}$  will be denoted by $\exo A$ and its elements will be called \textit{exomorphisms} of $A$. Observe that the projection epimorphism $\mend A\to\exo A$ is idempotent-separating.
	\end{defn}
	
	The following remark shows that $\sim$ is a generalization of the congruence $\minn A$ to the case of isomorphisms between non-unital ideals of an inverse semigroup.
	\begin{rem}
		Let $A$ be a semilattice of groups and $\f,\psi\in\Sigma(A)$ isomorphisms between \textit{unital} ideals of $A$. Then, identifying $\f$ and $\psi$ with elements of $\mend A$ as in~\cite[Proposition 3.4]{DK2}, we have $\f\sim\psi$ if and only if $(\f,\psi)\in\minn A$. Consequently, $\exo(A)$ can be seen as an inverse subsemigroup of $\varsigma(A)$. 
	\end{rem}

	
	\begin{rem}\label{equiv-Thetas}
		Let $G$ be a group, $S$ an inverse semigroup and $\Theta=(\0,w)$, $\Theta'=(\0',w')$ two twisted partial actions of $G$ on $S$. If $\Theta$ is equivalent to $\Theta'$ then $\0_g\sim\0'_g$ for all $g\in G$.
	\end{rem}
	
	\subsection{Partial homomorphisms}\label{sec:phomprem}

	\begin{defn}\label{def:phom}
		A map $f : G \to S$ from a group $G$ to a semigroup $S$ is called a \textit{partial homomorphism} if it satisfies, for all $g,h \in G$:
		\begin{enumerate}[label=(PH\arabic*)]
			\item\label{PH1} $f(g\m) f(g) f(h) = f(g\m) f(gh)$;
			\item\label{PH2} $f(g) f(h) f(h\m) = f(gh) f(h\m)$;
			\item\label{PH3} $f(g) f(1) = f(g)$.
		\end{enumerate}
		If $S$ is a monoid, then the partial homomorphism is said to be \textit{unital} if, instead of \cref{PH3}, one has $f(1) = 1$.
	\end{defn}
	
	\begin{rem}
		\cref{def:phom} is a slight modification of what was defined in~\cite{DN}, where $S$ was supposed to be a monoid. 
	\end{rem}
	
	The following is well-known and easy to prove.
	\begin{cor}\label{propert-of-part-hom}
		Let $f:G\to S$ be a partial homomorphism. Then for all $g,h \in G$
		\begin{enumerate}
			\item $f(g)f(g\m)f(g)=f(g)$;
			\item $f(1)f(g)=f(g)$;
			\item $f(g)f(h)=f(g)f(g\m)f(gh)=f(gh)f(h\m)f(h)$.\label{f(g)f(h)=f(g)f(g-inv)f(gh)}
		\end{enumerate}
	In particular, if $S$ is inverse, then $f(g\m)=f(g)\m$.
	\end{cor}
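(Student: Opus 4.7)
The plan is to peel off the three identities directly from axioms \cref{PH1,PH2,PH3} via targeted substitutions in the group arguments, and then deduce the inverse claim from uniqueness of inverses in an inverse semigroup.

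First, I would establish (i) by substituting $g\mapsto g\m$ and $h\mapsto g$ in \cref{PH1}; this produces $f(g)f(g\m)f(g)=f(g)f(1)$, after which \cref{PH3} collapses the right-hand side to $f(g)$. Next, (ii) follows by substituting $h\mapsto g\m$ in \cref{PH2}, which yields $f(g)f(g\m)f(g)=f(1)f(g)$, and then part (i) rewrites the left-hand side as $f(g)$.

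For (iii), the strategy is to insert the padding $f(g)=f(g)f(g\m)f(g)$ from (i) into $f(g)f(h)$ and let \cref{PH1} absorb the middle triple $f(g\m)f(g)f(h)$ into $f(g\m)f(gh)$, yielding the first equality. Symmetrically, padding on the right by $f(h)=f(h)f(h\m)f(h)$ (i.e.\ (i) applied to $h$) and invoking \cref{PH2} gives the second equality $f(g)f(h)=f(gh)f(h\m)f(h)$.

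Finally, for the inverse claim, identity (i) together with its instance $f(g\m)f(g)f(g\m)=f(g\m)$ (obtained by replacing $g$ with $g\m$ in (i)) asserts that $f(g)$ and $f(g\m)$ satisfy the mutual generalized-inverse equations; since inverses in an inverse semigroup are unique, $f(g\m)=f(g)\m$. The argument is essentially routine and I anticipate no serious obstacle; the only minor subtlety is to order the parts so that (i) is already available before it is invoked in (ii) and (iii), and to pick the \emph{right} substitution in each axiom, since the obvious choices $g=1$ or $h=1$ produce only tautologies.
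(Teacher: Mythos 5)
Your proof is correct and is exactly the standard argument the paper has in mind (the paper omits the proof, calling the corollary ``well-known and easy to prove''): each part follows by the substitutions you indicate in \cref{PH1,PH2,PH3}, and the final claim follows from uniqueness of generalized inverses in an inverse semigroup. No gaps.
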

	Within this subsection we shall fix a group $G$, a semilattice of groups $A$ and a partial homomorphism $\psi : G \to \varsigma(A)$. For any $g\in G$ we denote by $\D_g$ the common range of all the isomorphisms of the class $\psi(g)$. We shall also choose a set of representatives $\0_g\in\psi(g)$.
	
	\begin{lem}\label{domain-of-0_g}
		The common domain of all the isomorphisms from $\psi(g)$ is $\D_{g\m}$. In particular, $\0_g$ is an isomorphism $\D_{g\m}\to\D_g$.
	\end{lem}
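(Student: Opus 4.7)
The key point is that $\varsigma(A)$ is an inverse monoid, so $\psi$ being a partial homomorphism into an inverse monoid makes it behave well under inversion. Specifically, by \cref{propert-of-part-hom}~(iv), one has
\[
\psi(g\m)=\psi(g)\m
\]
in $\varsigma(A)$. The idea is then to translate this identity in the quotient back into an equality of domains and ranges in $\Sigma(A)$.

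First, I would recall that inversion in the quotient $\varsigma(A)=\Sigma(A)/{\sim}$ descends from the inversion in the inverse semigroup $\Sigma(A)$: since $\sim$ is a congruence on an inverse semigroup, one has $[\f]\m=[\f\m]$ for every $\f\in\Sigma(A)$. In particular, with $\0_g\in\psi(g)$ being the chosen representative, $\0_g\m\in\psi(g)\m=\psi(g\m)$.

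Next, by \cref{def:sim} and the remark following \cref{defn:sigmaAmon}, all representatives of a single class in $\varsigma(A)$ share both their domain and their range. Denote provisionally the common domain of $\psi(g)$ by $\D'_g$; then in particular $\D'_g=\dom\0_g$. Applying this to the class $\psi(g\m)$ and using the previous step, the common range of $\psi(g\m)$ equals $\ran\0_g\m=\dom\0_g=\D'_g$. But by the convention fixed right before the lemma, the common range of $\psi(g\m)$ is $\D_{(g\m)\m\cdot(g\m)^{?}}$—more precisely, $\D_{g\m}$. Hence $\D'_g=\D_{g\m}$, which is the desired statement; the ``in particular'' clause is then immediate since $\0_g\in\psi(g)$ is one such representative.

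I do not anticipate any real obstacle: the whole argument is a bookkeeping exercise exchanging ``domain'' and ``range'' through the inverse operation. The only point worth double-checking is that $[\f]\m=[\f\m]$ in $\varsigma(A)$, which follows because $\sim$ is a congruence on the inverse semigroup $\Sigma(A)$, so the induced inverse in the quotient must coincide with the image of the inverse.
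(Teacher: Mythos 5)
Your argument is correct, and it is genuinely more direct than the paper's. You reduce everything to the single identity $\psi(g\m)=\psi(g)\m$ (the ``in particular'' clause of \cref{propert-of-part-hom}, applicable because $\varsigma(A)$ is inverse) together with the observation that $[\f]\m=[\f\m]$ in $\varsigma(A)$ --- which is indeed automatic, since $[\f\m]$ is an inverse of $[\f]$ in the quotient of the inverse semigroup $\Sigma(A)$ by the congruence $\sim$, hence \emph{the} inverse. From $\0_g\m\in\psi(g\m)$ and the fact that all members of a $\sim$-class share their range (built into \cref{def:sim}), you get $\dom\0_g=\ran(\0_g\m)=\D_{g\m}$ at once. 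The paper instead works with the idempotent class $\psi(g)\psi(g\m)=[\0_g\0_{g\m}]=[\id_I]$, identifies $I=\D_g$ by a two-sided inclusion of ranges using $\psi(g)\psi(g\m)\psi(g)=\psi(g)$, and then pins down $\dom\0_g$ by comparing it with $\dom(\0_{g\m}\0_g)$. What the longer route buys is the explicit byproduct $\id_{\D_g}\in\psi(g)\psi(g\m)$, which the paper invokes later (in \cref{lem:tpa3} and in the construction of the multipliers $w_{g,h}$); if you adopt your proof you should record that this byproduct still follows, e.g.\ from $\psi(g)\psi(g\m)=[\0_g\0_g\m]=[\id_{\ran\0_g}]=[\id_{\D_g}]$. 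The only cosmetic slip in your write-up is the citation of a nonexistent item (iv) of \cref{propert-of-part-hom} (it is the unnumbered final clause) and the garbled subscript $\D_{(g\m)\m\cdot(g\m)^{?}}$, which you correctly resolve to $\D_{g\m}$.
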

	\begin{proof}
		Consider the composition $\0_g\0_{g\m}\in\psi(g)\psi(g\m)$. Since $\psi(g)\psi(g\m)=\psi(g)\psi(g)\m$ is an idempotent, it is the class of some idempotent $\id_I$ from $\Sigma(A)$. Hence, $\dom(\0_g\0_{g\m})=\ran(\0_g\0_{g\m})=I$. We shall first prove that $I=\D_g$. Using the fact that $\psi(g)\psi(g\m)\psi(g)=\psi(g)$, we have $\ran(\0_g\0_{g\m}\0_g)=\ran(\0_g)=\D_g$. It follows that $\D_g\subseteq\ran(\0_g\0_{g\m})=I$. On the other hand, $I=\ran(\0_g\0_{g\m})\subseteq\ran(\0_g)=\D_g$, whence $I=\D_g$, as desired.
		
		Clearly, $\dom(\0_g)=\dom(\0_g\0_{g\m}\0_g)\subseteq\dom(\0_{g\m}\0_g)=\D_{g\m}$. On the other hand, $\D_{g\m}=\dom(\0_{g\m}\0_g)\subseteq\dom(\0_g)$, proving the lemma.
	\end{proof}
	
	\begin{lem}\label{lem:tpa3}
		For any $g\in G$ one has
		\begin{align*}
		\0_g(\D_{g\m}\D_h) = \D_g\D_{gh}.
		\end{align*}
	\end{lem}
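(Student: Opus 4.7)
The plan is to invoke item~\ref{f(g)f(h)=f(g)f(g-inv)f(gh)} of \cref{propert-of-part-hom}, which yields
\begin{align*}
\psi(g)\psi(h)=\psi(g)\psi(g\m)\psi(gh)
\end{align*}
in $\varsigma(A)$, and to compare the ranges of suitable representative compositions in $\Sigma(A)$.

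The first observation is that, by \cref{def:sim}, the congruence $\sim$ preserves both domains and ranges, so $\ran$ descends to a well-defined function on $\varsigma(A)$; in particular, any two representatives of the same class have equal ranges as ideals of $A$. It therefore suffices to compute the range of a representative composition for each side of the displayed equality and equate them.

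For the left-hand side, the standard range formula in $\I(A)$, together with \cref{I^2=I-in-inverse-S} and \cref{domain-of-0_g}, gives
\begin{align*}
\ran(\0_g\circ\0_h)=\0_g(\ran\0_h\cap\dom\0_g)=\0_g(\D_h\cap\D_{g\m})=\0_g(\D_{g\m}\D_h).
\end{align*}
For the right-hand side I would take the representative $(\0_g\circ\0_{g\m})\circ\0_{gh}$. The inner factor $\0_g\circ\0_{g\m}$ represents the idempotent $\psi(g)\psi(g)\m$ of $\varsigma(A)$ and, exactly as in the proof of \cref{domain-of-0_g}, it has both domain and range equal to $\D_g$. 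Since the projection $\Sigma(A)\to\varsigma(A)$ is idempotent-separating (see the remark after \cref{defn:sigmaAmon}), its unique idempotent preimage must be $\id_{\D_g}$, so $\0_g\circ\0_{g\m}\sim\id_{\D_g}$. Applying \cref{lem:relpreservideais} to the ideal $\D_g\D_{gh}\lhd\D_g$ then yields $(\0_g\circ\0_{g\m})(\D_g\D_{gh})=\id_{\D_g}(\D_g\D_{gh})=\D_g\D_{gh}$, whence
\begin{align*}
\ran((\0_g\circ\0_{g\m})\circ\0_{gh})=(\0_g\circ\0_{g\m})(\D_g\cap\D_{gh})=\D_g\D_{gh},
\end{align*}
after using \cref{I^2=I-in-inverse-S} once more.

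Equating the two ranges gives the desired identity $\0_g(\D_{g\m}\D_h)=\D_g\D_{gh}$. The only step requiring real care is establishing $\0_g\circ\0_{g\m}\sim\id_{\D_g}$, which rests on the idempotent-separating property of $\Sigma(A)\to\varsigma(A)$ combined with the explicit computation of the domain and range of $\0_g\circ\0_{g\m}$; the remaining manipulations are routine bookkeeping with images of ideals under the chosen representatives $\0_g$, $\0_{g\m}$, $\0_{gh}$.
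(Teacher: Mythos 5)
Your proof is correct and follows essentially the same route as the paper's: both compute $\ran(\0_g\0_h)=\0_g(\D_{g\m}\D_h)$ and then re-evaluate that range using the partial homomorphism identity $\psi(g)\psi(h)=\psi(g)\psi(g\m)\psi(gh)$ together with the fact that ranges are constant on $\sim$-classes. The only difference is cosmetic: the paper takes $\id_{\D_g}\in\psi(g)\psi(g\m)$ directly (already available from the proof of \cref{domain-of-0_g}) and uses the representative $\id_{\D_g}\0_{gh}$, whereas you work with $(\0_g\circ\0_{g\m})\circ\0_{gh}$ and separately verify $\0_g\circ\0_{g\m}\sim\id_{\D_g}$, which is valid but slightly longer.
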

	\begin{proof}
		Indeed, using \cref{domain-of-0_g} we see that $\0_g(\D_{g\m}\D_h)=\0_g(\D_{g\m}\cap \D_h)=\ran (\0_g\0_h)$. But $\0_g\0_h\in\psi(g)\psi(h)=\psi(g) \psi(g\m) \psi(gh)$ and $\id_{\D_g}\in\psi(g)\psi(g\m)$. So, $\ran(\0_g\0_h)=\ran(\id_{\D_g}\0_{gh})=\D_g\cap\D_{gh}=\D_g\D_{gh}$.
	\end{proof}
	
	\begin{cor}\label{cor:gentpa3}
		We also have a more general equality: \[\0_g(\D_{g\m} \D_{h_1} \ldots \D_{h_n}) = \D_g\D_{gh_1} \ldots \D_{gh_n}.\]
	\end{cor}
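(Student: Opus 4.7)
The natural approach is induction on $n$, using Lemma \ref{lem:tpa3} as the base case ($n=1$) and exploiting the fact that, in an inverse semigroup, the product and intersection of ideals coincide (Remark \ref{I^2=I-in-inverse-S}).

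For the inductive step, assume the equality for $n$ and consider $\D_{g\m}\D_{h_1}\cdots\D_{h_n}\D_{h_{n+1}}$. Using Remark \ref{I^2=I-in-inverse-S}, I would rewrite this product as
\begin{align*}
\D_{g\m}\D_{h_1}\cdots\D_{h_n}\D_{h_{n+1}}
&= (\D_{g\m}\D_{h_1}\cdots\D_{h_n}) \cap (\D_{g\m}\D_{h_{n+1}}),
\end{align*}
which uses only the fact that the first factor is already contained in $\D_{g\m}$. Both terms in this intersection are subideals of $\D_{g\m} = \dom(\0_g)$, so $\0_g$ is defined on them, and as $\0_g$ is a bijection it preserves intersections. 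Applying the inductive hypothesis to the first factor and Lemma \ref{lem:tpa3} to the second factor, and then reconverting the intersection back into a product by Remark \ref{I^2=I-in-inverse-S}, one obtains
\begin{align*}
\0_g\bigl((\D_{g\m}\D_{h_1}\cdots\D_{h_n}) \cap (\D_{g\m}\D_{h_{n+1}})\bigr)
&= (\D_g\D_{gh_1}\cdots\D_{gh_n}) \cap (\D_g\D_{gh_{n+1}})\\
&= \D_g\D_{gh_1}\cdots\D_{gh_n}\D_{gh_{n+1}},
\end{align*}
completing the induction.

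There is no real obstacle here: the only non-trivial ingredients are the already established Lemma \ref{lem:tpa3} and the standard inverse-semigroup fact that products and finite intersections of ideals agree. The computation amounts to repeatedly decomposing the long product as a binary intersection of pieces on which Lemma \ref{lem:tpa3} or the inductive hypothesis directly applies.
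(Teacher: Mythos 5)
Your induction is correct and rests on exactly the ingredients the paper uses: idempotence and commutativity of the ideals $\D_g$ (\cref{I^2=I-in-inverse-S}) together with \cref{lem:tpa3} applied to each binary factor. The paper compresses this to a one-line remark, but the underlying argument is the same.
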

	\begin{proof}
		For by \cref{I^2=I-in-inverse-S} all $\D_g$ are idempotent and commute.
	\end{proof}

	\begin{cor}\label{cor:imidemps}
		Let $\psi(G)\subseteq\exo A$ and $\D_g=1_gA$ for some $1_g\in E(C(A))$. Then
		\begin{align*}
		\0_g(1_{g\m}1_h) = 1_g1_{gh}.
		\end{align*} 
	\end{cor}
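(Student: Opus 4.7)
The plan is to deduce this directly from Lemma \ref{lem:tpa3} after identifying the relevant ideals as unital monoids with canonical identities.

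First I would observe that, since the $1_k$ are central idempotents and $A=A^2$ (because $A$ is a semilattice of groups, hence regular), the products of ideals simplify to $\D_{g\m}\D_h=1_{g\m}1_hA$ and $\D_g\D_{gh}=1_g1_{gh}A$. Both of these are unital ideals with two-sided identities $1_{g\m}1_h$ and $1_g1_{gh}$ respectively, each being a central idempotent as a product of such.

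By Lemma \ref{lem:tpa3}, $\0_g$ restricts to a semigroup isomorphism from $1_{g\m}1_hA$ onto $1_g1_{gh}A$. Any semigroup isomorphism between two monoids must send identity to identity, since the image of the identity is an idempotent acting as a two-sided identity on the codomain, and such an identity is unique in a semigroup. Applying this observation yields $\0_g(1_{g\m}1_h)=1_g1_{gh}$, as required.

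The only minor point requiring verification is that $1_{g\m}1_h$ genuinely acts as a two-sided identity on $1_{g\m}1_hA$, which is immediate from the idempotency and commutativity of the $1_k$; otherwise there is no real obstacle, the argument being essentially a bookkeeping exercise on top of Lemma \ref{lem:tpa3}.
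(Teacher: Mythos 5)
Your proof is correct and follows essentially the same route as the paper: identify $\D_{g\m}\D_h$ and $\D_g\D_{gh}$ as unital ideals with identities $1_{g\m}1_h$ and $1_g1_{gh}$ respectively, and invoke \cref{lem:tpa3} together with the fact that an isomorphism of monoids sends identity to identity. The paper states this in one line; your additional bookkeeping (centrality of the $1_k$, $A^2=A$, uniqueness of the identity) is all accurate.
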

	\begin{proof}
		Indeed, $1_{g\m}1_h$ is the identity element of $\D_{g\m}\D_h$ and $1_g1_{gh}$ is the identity element of $\D_g\D_{gh}$.
	\end{proof}

	\begin{lem}\label{lem:distcentro}
		Let $I$ and $J$ be ideals of a semilattice of groups $A$. Then
		\begin{align*}
		C(IJ) = C(I)C(J).
		\end{align*}
	\end{lem}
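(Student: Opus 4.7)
The strategy is to combine the Clifford structure of $A$ with the fact, established in \Cref{I^2=I-in-inverse-S}, that $IJ=I\cap J$ whenever $I,J$ are non-empty ideals. Writing $A=\bigsqcup_{e\in E(A)}A_e$ as the disjoint union of its maximal subgroups, every non-empty ideal $K$ of $A$ is the union of those $A_e$ with $e\in E(K):=\{e\in E(A)\mid A_e\sst K\}$, and $E(K)$ is an order-ideal of the semilattice $E(A)$. In particular $E(IJ)=E(I)\cap E(J)$, so the two sides of the claimed identity should match component by component; the empty-ideal case is trivial, as both sides reduce to $\emptyset$.

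The main step is the component-wise description $C(K)=\bigsqcup_{e\in E(K)}Z(A_e)$. The inclusion from right to left is immediate: given $z\in Z(A_e)$ with $e\in E(K)$ and an arbitrary $a\in A_f$, both $za$ and $az$ lie in $A_{ef}$, and since the Clifford structure homomorphism $A_e\to A_{ef}$ carries $Z(A_e)$ into $Z(A_{ef})$, one gets $za=az$. The reverse inclusion follows at once from the observation that $z\in C(K)\cap A_e$ must commute with every element of $A_e\sst K$, forcing $z\in Z(A_e)$. A useful corollary is $C(K)\sst C(A)$ for every ideal $K$ of $A$; that is, centrality in $K$ is automatically centrality in all of $A$.

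Armed with this, both inclusions of the lemma are routine. For $C(I)C(J)\sst C(IJ)$, any $a\in C(I)$ and $b\in C(J)$ lie in $C(A)$, hence commute with each other and with every $x\in IJ$; consequently $abx=xab$, and $ab\in IJ$ since $a\in I$ and $b\in J$. For the reverse inclusion, given $z\in C(IJ)$ with $z\in A_e$, the idempotent $e=zz\m$ lies in $E(I)\cap E(J)$, so $z=z\cdot e$ exhibits $z$ as the product of $z\in Z(A_e)\sst C(I)$ and $e\in Z(A_e)\sst C(J)$. The only mildly delicate point in this plan is the component-wise description of $C(K)$ for a possibly non-unital ideal, and in particular the passage from centrality in $K$ to centrality in all of $A$; once this is in hand, the rest is straightforward bookkeeping with the semilattice decomposition.
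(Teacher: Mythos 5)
There is a genuine gap: your component‑wise description $C(K)=\bigsqcup_{e\in E(K)}Z(A_e)$ is false, and so is the step used to prove its right‑to‑left inclusion. The structure homomorphism $A_e\to A_{ef}$ of a Clifford semigroup need not be surjective, and a non‑surjective group homomorphism does not carry the center into the center. Concretely, take $E(A)=\{e>f\}$, $A_e=\mathbb Z$, $A_f=S_3$, with structure map $n\mapsto(123)^n$. Then $1\in Z(A_e)=\mathbb Z$, but $1\cdot(12)=(123)(12)\neq(12)(123)=(12)\cdot 1$, so $1\notin C(A)$ although $1\in Z(A_e)$ and $e\in E(A)$. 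Thus only the inclusion $C(K)\cap A_e\sst Z(A_e)$ survives, and the same counterexample breaks the step ``$z\in Z(A_e)\sst C(I)$'' in your proof of $C(IJ)\sst C(I)C(J)$.

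The good news is that the corollary you actually need, $C(K)\sst C(A)$ for every ideal $K$, is true; it just requires a direct argument rather than the false decomposition. For $z\in C(K)$ and $a\in A$ one has $(z\m z)a\in K$ (since $z\m z\in K$ and $K$ is an ideal) and $z\m z\in E(A)\sst C(A)$, whence
\begin{align*}
za=z\bigl((z\m z)a\bigr)=\bigl((z\m z)a\bigr)z=a(z\m z)z=az.
\end{align*}
With this in hand your two inclusions do go through: for $C(I)C(J)\sst C(IJ)$ exactly as you wrote, and for the converse by replacing the false claim with ``$z\in C(IJ)\sst C(A)$, hence $z\in C(I)$, and $e=z\m z\in E(A)\cap J\sst C(J)$, so $z=ze\in C(I)C(J)$.'' Note that once you argue this way, the Clifford decomposition is doing no work at all; the paper's proof runs entirely on such multiplicative identities (centrality of idempotents plus $IJ=I\cap J$) and never decomposes $A$ into its maximal subgroups.
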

	\begin{proof}
		The inclusion $C(I)C(J)\subseteq C(IJ)$ holds for an arbitrary inverse semigroup $A$. Indeed, given $x \in C(I)$, $y \in C(J)$, and $a \in I \cap J = IJ$, we have: 
		\begin{align*}
		(xy)a  =  x(ya) =  x(ay) =  (xa)y =  (ax)y =  a(xy).
		\end{align*}
		
		For the converse inclusion $C(IJ) \subseteq C(I)C(J)$, we take $x \in I$ and $y \in J$ such that $xy \in C(IJ)$. Then, for any $a \in I$, using the fact that $E(A)\subseteq C(A)$, we have:
		\begin{align*}
		(xy)a  =  xy(y\m y)a =  xy(ay\m y) = (ay\m y)xy =  a xy (y\m y)= a(xy).
		\end{align*}
		Hence, $xy\in C(I)$. Similarly, $xy\in C(J)$. Thus, $xy \in C(I) \cap C(J) = C(I)C(J)$, proving $C(IJ) \subseteq C(I)C(J)$.
	\end{proof}
	
	\begin{cor}\label{cor:centroidemp}
		The center of an idempotent ideal $I$ of $A$ is also idempotent.
	\end{cor}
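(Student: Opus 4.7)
The statement follows immediately from the previous lemma applied with $J=I$. The plan is simply to invoke \Cref{lem:distcentro} with both ideals equal to $I$, obtaining
\begin{align*}
C(I)^2 = C(I)C(I) = C(I\cdot I) = C(I^2) = C(I),
\end{align*}
where the last equality uses the assumption that $I$ is idempotent. Thus $C(I)$ is idempotent as claimed.

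There is no real obstacle here: the work has already been done in establishing $C(IJ)=C(I)C(J)$ for arbitrary ideals of the semilattice of groups $A$, and the corollary is just the diagonal specialization. One only needs to observe that $C(I)C(I)\subseteq C(I)$ holds in any semigroup (so the content is in the reverse inclusion), and that reverse inclusion is exactly what the lemma supplies.
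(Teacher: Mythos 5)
Your proof is correct and is essentially identical to the paper's: both apply \Cref{lem:distcentro} with $J=I$ to get $C(I)=C(I^2)=C(I)^2$. Nothing further is needed.
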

	\begin{proof}
		Indeed, if $I \lhd A$ and $I=I^2$, then $C(I) = C(I^2) = C(I)^2$.
	\end{proof}
	
	\begin{cor}\label{cor:ideaiscentro}
		For any $g\in G$ we have
		\begin{align*}
		\0_g(C(\D_{g\m})C(\D_h)) = C(\D_g)C(\D_{gh}).
		\end{align*} 
	\end{cor}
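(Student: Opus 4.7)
The plan is to chain together the two lemmas already proved (\Cref{lem:tpa3} and \Cref{lem:distcentro}) via the fact that the isomorphism $\0_g$ carries the center of an ideal to the center of its image. In symbols, I would like to establish the chain
\begin{align*}
\0_g(C(\D_{g\m})C(\D_h))
&= \0_g(C(\D_{g\m}\D_h))
= C(\0_g(\D_{g\m}\D_h)) \\
&= C(\D_g\D_{gh})
= C(\D_g)C(\D_{gh}).
\end{align*}

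First I would apply \Cref{lem:distcentro} to the ideals $\D_{g\m}$ and $\D_h$ of $A$ to rewrite $C(\D_{g\m})C(\D_h) = C(\D_{g\m}\D_h)$, and similarly to the ideals $\D_g$ and $\D_{gh}$ on the right-hand side. This reduces the claim to showing $\0_g(C(\D_{g\m}\D_h)) = C(\D_g\D_{gh})$.

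For the middle equality $\0_g(C(\D_{g\m}\D_h)) = C(\0_g(\D_{g\m}\D_h))$, I would use that $\0_g:\D_{g\m}\to\D_g$ is a semigroup isomorphism (by \Cref{domain-of-0_g}), and that $\D_{g\m}\D_h = \D_{g\m}\cap\D_h$ is an ideal of $\D_{g\m}$; hence $\0_g$ restricts to an isomorphism between $\D_{g\m}\D_h$ and its image, which intertwines the respective centers. Finally, \Cref{lem:tpa3} yields $\0_g(\D_{g\m}\D_h) = \D_g\D_{gh}$.

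There is no real obstacle here: the result is essentially a formal consequence of the two preceding lemmas together with the fact that an isomorphism of inverse semigroups preserves centers of ideals. The only point worth verifying carefully is that $\D_{g\m}\D_h$ is indeed an ideal of $\dom(\0_g) = \D_{g\m}$, which is immediate because $\D_h$ is an ideal of $A$ and therefore $\D_{g\m}\cap\D_h$ is an ideal of $\D_{g\m}$.
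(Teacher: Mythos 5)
Your proposal is correct and is essentially identical to the paper's own proof: both rewrite the products of centers via \Cref{lem:distcentro}, use that the isomorphism $\0_g$ preserves centers, and identify the image via \Cref{lem:tpa3}. No issues.
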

	\begin{proof}
		Since each $\0_g$ is an isomorphism, it preserves the centers, so using \cref{lem:distcentro,lem:tpa3}, we conclude that
		\begin{align*}
		\0_g(C(\D_{g\m})C(\D_h))=\0_g(C(\D_{g\m}\D_h))=C(\D_g\D_{gh})=C(\D_g)C(\D_{gh}).
		\end{align*} 
	\end{proof}

	\section{Abstract kernel of an extension of \texorpdfstring{$A$}{A} by \texorpdfstring{$G$}{G}}\label{sec:abskernextAbyG}
	
	\subsection{Abstract kernel of an extension of \texorpdfstring{$A$}{A} by \texorpdfstring{$S$}{S}}\label{sec-abstr-kern-of-ext-of-A-by-S}
	
	Recall Lausch's construction of the abstract kernel of an extension. Let $A\xrightarrow{i} U \xrightarrow{j} S$ be an extension of a semilattice of (not necessarily abelian) groups $A$ by an inverse semigroup $S$. Then (see \cite[p. 291]{Lausch}) there is an idempotent-separating homomorphism $\nu:U \to \mend A$, $u \mapsto \nu_u$, defined by 
	\begin{align*}
	\nu_u(a)=i\m(ui(a)u\m).
	\end{align*}
	Recall from \cite[Theorem 7.55]{Clifford-Preston-2} that $j(u)=j(v)$ for $u,v\in U$ if and only if there exists $a\in A$ such that $u=i(a)v$ and $i(aa\m)=vv\m$. Then 
	$$
	\nu_u=\nu_{i(a)}\circ\nu_v,
	$$
	and $\nu_{i(a)}\in\minn A$ with $\nu_{i(a)}\circ(\nu_{i(a)})\m=\nu_{i(aa\m)}=\nu_{vv\m}=\nu_v\circ(\nu_v)\m$. Therefore, $\nu_u$ and $\nu_v$ belong to the same class of the idempotent-separating congruence on $\mend A$ whose kernel coincides with $\minn A$ (\cite[Proposition 8.2 (ii)]{Lausch}). It follows that the map $\psi:S\to\exo A$, which sends $s\in S$ to the $\minn A$-class $[\nu_u]$ of $\nu_u$, where $j(u)=s$, is well-defined, in the sense that the definition does not depend on the choice of $u$. Moreover, for $s=j(u)$ and $t=j(v)$, one has $st=j(uv)$, and hence
	$$
	\psi(st)=[\nu_{uv}]=[\nu_u\circ\nu_v]=[\nu_u][\nu_v]=\psi(s)\psi(t),
	$$
	so $\psi$ is a homomorphism. If $\psi(e)=\psi(e')$ for some $e,e'\in E(S)$, then $[\nu_f]=[\nu_{f'}]$ for some (uniquely determined) pair $f,f'\in E(U)$ with $j(f)=e$ and $j(f')=e'$. Since $\nu_f,\nu_{f'}\in E(\mend A)$ and the natural epimorphism $\mend A\to\exo A$ is idempotent-separating by \cite[Proposition 8.2 (ii)]{Lausch}, we have $\nu_f=\nu_{f'}$, which implies $f=f'$, as $\nu$ is idempotent-separating. Thus, $\psi$ is also idempotent-separating as a homomorphism $S\to\exo A$.
	
	\subsection{From the abstract kernel of an extension of \texorpdfstring{$A$}{A} by \texorpdfstring{$S$}{S} to an abstract kernel of an extension of \texorpdfstring{$A$}{A} by \texorpdfstring{$G$}{G}}\label{sec-from-abstr-kern-A-S-to-abstr-kern-A-G}
	
	Now suppose that the extension $A\xrightarrow{i}U\xrightarrow{j}S$ above admits an order-preserving transversal $\rho:S\to U$, i.e. $j\circ\rho=\id_S$. Then by formulas (7), (9) and (11)  from \cite{DK2} the maps $\af:E(S)\to E(A)$, $\lambda:S\to\mend A$ and $f:S^2\to A$, where
	\begin{align*}
	\af(s)&=i\m(\rho(s)),\\
	\lambda_s&=\nu_{\rho(s)},\\
	f(s,t)&=i\m(\rho(s)\rho(t)\rho(st)\m),
	\end{align*}
	define a twisted $S$-module structure on $A$. Observe from the definition of $\psi$ that
	\begin{align}\label{psi(s)=[lambda_s]}
	\psi(s)=[\nu_{\rho(s)}]=[\lambda_s].
	\end{align}
	In particular, a choice of another order-preserving transversal $\rho'$ leads to the same class $[\lambda_s]$ in $\exo A$ for all $s\in S$ (see also \cite[Proposition 3.10]{DK2}).
	
	Furthermore, we know by \cite[Proposition 3.23]{DK2} that the twisted $S$-module $\Lambda=(\af,\lambda,f)$ is Sieben, so by \cite[Proposition 6.11]{DK2} it induces a twisted partial action $\Theta=(\0,w)$ of $\cG S$ on $A$, where
	\begin{align*}
	&\0_g(a)=\lambda_s(a),\mbox{ with }s\in g,\ \af(s\m s)=aa\m,\\
	&\begin{cases}
	w_{g,h}a&=f(s,s\m t)a\\
	aw_{g,h}&=af(s,s\m t)
	\end{cases}
	\mbox{ with }s\in g,\ t\in gh,\ \af(ss\m)=\af(tt\m)=aa\m.
	\end{align*}
	Moreover, thanks to~\cite[Proposition 6.13]{DK2} any $\Lambda'$ equivalent to $\Lambda$ induces $\Theta'$ equivalent to $\Theta$. Thus, in view of \cref{equiv-Thetas}, we have a well-defined map that sends $g \in \cG S$ to $[\0_g]\in\varsigma(A)$.
	
	\begin{prop}\label{prop:psiphom}
		Given an extension $A \xrightarrow{i} U \xrightarrow{j} S$ and an order-preserving transversal $\rho: S \to U$  of $j$, the map 
		\begin{align*}
			\psi : \cG S & \to \varsigma(A), \\ 
			g & \mapsto  [\0_g],
		\end{align*}
		defined above is a unital partial homomorphism.
	\end{prop}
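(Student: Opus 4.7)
The plan is to verify the three properties (PH1)--(PH3) of \Cref{def:phom} together with the unitality $\psi(1)=1_{\varsigma(A)}$. The unitality is immediate from (TPA2): $\0_1=\id_A$, so $\psi(1)=[\id_A]$ is the identity of $\varsigma(A)$. Similarly (PH3) is immediate, since $\0_g\circ\0_1=\0_g$. All the real content lies in (PH1) and (PH2).

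The heart of the proof is the intermediate equality
\[
 [\0_g][\0_h]=[\0_{gh}\circ e_{g,h}]\quad\text{in }\varsigma(A),\qquad e_{g,h}:=\id_{\D_{h\m}\D_{h\m g\m}}.
\]
I would establish it by checking the three clauses of \Cref{def:sim}. Using \Cref{lem:tpa3} together with the commutativity and idempotency of the ideals $\D_k$ (see \Cref{I^2=I-in-inverse-S}), both $\0_g\circ\0_h$ and $\0_{gh}\circ e_{g,h}$ share the common domain $\D_{h\m}\D_{h\m g\m}$ and the common range $\D_g\D_{gh}$. Then (TPA4) is precisely the statement that the two isomorphisms are conjugate on this domain by the invertible multiplier $w_{g,h}\in\U{\M(\D_g\D_{gh})}$. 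Since $\sim$ is a congruence (\Cref{prop:simcong}), the displayed equality follows in $\varsigma(A)$.

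Now (PH1) and (PH2) are obtained by composing with $[\0_{g\m}]$ on the left, respectively $[\0_{h\m}]$ on the right, and observing that the idempotent $e_{g,h}$ is absorbed. For (PH1), a further application of \Cref{lem:tpa3} to $\0_{g\m}$ yields $\dom{(\0_{g\m}\circ\0_{gh})}=\D_{h\m}\D_{h\m g\m}=\dom{e_{g,h}}$, so in $\Sigma(A)$ one has $\0_{g\m}\circ\0_{gh}\circ e_{g,h}=\0_{g\m}\circ\0_{gh}$, and passing to $\varsigma(A)$ gives $[\0_{g\m}][\0_g][\0_h]=[\0_{g\m}][\0_{gh}]$. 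For (PH2), \Cref{lem:tpa3} gives $\dom{(\0_{gh}\circ\0_{h\m})}=\0_h(\D_{h\m}\D_{h\m g\m})=\D_h\D_g$, the same domain is produced by $\0_{gh}\circ e_{g,h}\circ\0_{h\m}$, and the two partial maps visibly agree on $\D_h\D_g$; hence $[\0_g][\0_h][\0_{h\m}]=[\0_{gh}][\0_{h\m}]$.

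The only real obstacle is the bookkeeping of how ideals transform under iterated compositions of the $\0_g$'s; this is rendered routine by repeated use of \Cref{lem:tpa3} (and \Cref{cor:gentpa3}), leaving (TPA4) as the one piece of genuine algebraic content.
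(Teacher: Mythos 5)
Your argument is correct and follows essentially the same route as the paper's proof: both reduce (PH1) and (PH2) to \cref{TPA4} (conjugation by the multipliers $w$) together with the domain/range bookkeeping of \Cref{lem:tpa3} and the congruence property of $\sim$; you merely package the key step as the single identity $[\0_g][\0_h]=[\0_{gh}\circ e_{g,h}]$ and then absorb the idempotent, where the paper verifies the two conjugation relations $\0_{g\m}\0_g\0_h\sim\0_{g\m}\0_{gh}$ and $\0_g\0_h\0_{h\m}\sim\0_{gh}\0_{h\m}$ directly. One small slip: in your (PH2) computation the domain $\0_{h\m}\m(\D_{h\m}\D_{h\m g\m})$ equals $\D_h\D_{g\m}$, not $\D_h\D_g$, but this does not affect the argument.
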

	\begin{proof}
		It only suffices to show that for all $g,h \in \cG S$
		\begin{align}
			\0_{g\m} \0_g\0_h&\sim\0_{g\m}\0_{gh},\label{0_(x-inv)0_x0_y-sim-0_(x-inv)0_xy}\\
			\0_g\0_h\0_{h\m}&\sim \0_{gh}\0_{h\m},\label{0_x0_y0_(y-inv)-sim-0_xy0_(y-inv)}
		\end{align}
		since $\0_1=\id_A$ in view of \cref{TPA2}. Observe using \cref{TPA3} that 
		\begin{align*}
			\dom{(\0_g \0_h)}&=\0\m_h(\D_h \D_{g\m})=\D_{h\m}\D_{h\m g\m},\\
			\ran{(\0_g \0_h)}&=\0_g(\D_h \D_{g\m}) = \D_g\D_{gh}.
		\end{align*} 
		It follows that 
		\begin{align*}
			\dom{(\0_{g\m}\0_g\0_h)}&=\dom{(\0_{g\m}(\0_g\0_h))} = \0\m_h\0\m_g(\D_g\D_{gh})  = \D_{h\m}\D_{h\m g\m},\\
			\ran{(\0_{g\m}\0_g\0_h)}&=\ran{(\0_{g\m}(\0_g\0_h))} = \0_{g\m}(\D_g\D_{gh}) = \D_{g\m}\D_h.
		\end{align*}
		Now,
		\begin{align*}
			\dom{(\0_{g\m}\0_{gh})}&=\0\m_{gh}(\D_{gh}\D_g) = \D_{h\m g\m}\D_{h\m},\\
			\ran{(\0_{g\m}\0_{gh})}&=\0_{g\m}(\D_{gh}\D_g) = \D_h\D_{g\m},
		\end{align*} 
		so that $\0_{g\m}\0_g\0_h$ and $\0_{g\m}\0_{gh}$ have the same domain and range. For any $a \in \D_{h\m g\m}\D_{h\m}$ by \cref{TPA2,TPA4} we have
		\begin{align*}
			\0_{g\m} \0_g\0_h(a) & = (\0_{g\m} \0_g)(\0_h(a)) = w_{g\m,g} \0_h(a)w\m_{g\m,g}, \\
			\0_{g\m} \0_{gh}(a) & = w_{g\m,gh} \0_h(a) w\m_{g\m,gh},
		\end{align*}
		whence $\0_{g\m} \0_g\0_h(a)=w_{g\m,g} w\m_{g\m,gh}(\0_{g\m} \0_{gh}(a))w_{g\m,gh}w\m_{g\m,g}$, proving \cref{0_(x-inv)0_x0_y-sim-0_(x-inv)0_xy}. The proof of \cref{0_x0_y0_(y-inv)-sim-0_xy0_(y-inv)} is similar.
	\end{proof}
	
	\begin{defn}\label{def:nunkernelabs}
		Given an admissible extension $A \xrightarrow{i} U \xrightarrow{j} G$ of a semilattice of groups $A$ by a group $G$, we define the \textit{abstract kernel of this extension} as the partial homomorphism 
		\begin{align*}
			\psi : G & \to  \varsigma(A),\\
			g & \mapsto  [\0_g],
		\end{align*}
		with $\0 = \{ \0_g : \D_{g\m} \to \D_g \}$ being the action part of the twisted partial action of $G$ on $A$ induced by a refinement $A \xrightarrow{i} U \xrightarrow{\pi} S$ of the extension and an order-preserving transversal $\rho: S \to U$ of $\pi$.
		
		More generally, by an \textit{abstract kernel} (without referring to an extension) we mean a triple $(A,G,\psi)$, where $A$ is a semilattice of groups, $G$ is a group, and $\psi : G \to \varsigma(A)$ is a unital partial homomorphism.
	\end{defn}
	
	\begin{prop}\label{prop:nunequivmesmokernelabs}
		Equivalent admissible extensions of $A$ by $G$ define the same abstract kernel $\psi:G\to\varsigma(A)$.
	\end{prop}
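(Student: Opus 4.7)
The plan is to use the equivalence $\gamma\colon U\to U'$ to transport a refinement and transversal of the first extension to those of the second, and to show that both sides then produce \emph{identical} twisted $S$-modules. Since the induced twisted partial action on $A$ depends only on the equivalence class of the underlying twisted $S$-module (via \cite[Proposition 6.13]{DK2} together with \Cref{equiv-Thetas}), and the abstract kernel $\psi$ only sees the class $[\0_g]$ in $\varsigma(A)$, this will suffice to force $\psi=\psi'$.

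Let $\gamma\colon U\to U'$ be the inverse-semigroup isomorphism witnessing the equivalence, with $\gamma\circ i=i'$ and $j'\circ\gamma=j$. The key preliminary identity is that for every $u\in U$ and $a\in A$,
\begin{align*}
\nu'_{\gamma(u)}(a)=(i')\m\bigl(\gamma(u)\,i'(a)\,\gamma(u)\m\bigr)=(i')\m\bigl(\gamma(u\,i(a)\,u\m)\bigr)=\nu_u(a),
\end{align*}
using $i'=\gamma\circ i$, the preservation of inverses by $\gamma$, and admissibility (which ensures $u\,i(a)\,u\m\in i(A)$). Next I would fix any refinement $A \xrightarrow{i} U \xrightarrow{\pi} S$ of the first extension and any order-preserving transversal $\rho\colon S\to U$ of $\pi$, and set $\pi':=\pi\circ\gamma\m$ and $\rho':=\gamma\circ\rho$. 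A short diagram chase then shows that $\pi'$ is a refinement of the second extension and that $\rho'$ is an order-preserving transversal of $\pi'$, since $\pi'\circ\rho'=\id_S$ and isomorphisms of inverse semigroups preserve the natural partial order.

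It then remains to compare the twisted $S$-modules $(\af,\lb,f)$ and $(\af',\lb',f')$ produced by these data. For $s\in E(S)$, the equality $\rho(s)=i(\af(s))$ gives $\rho'(s)=\gamma(i(\af(s)))=i'(\af(s))$, hence $\af'=\af$; the intertwining identity yields $\lb'_s=\nu'_{\rho'(s)}=\nu_{\rho(s)}=\lb_s$; and applying $\gamma$ to $\rho(s)\rho(t)\rho(st)\m=i(f(s,t))$ gives $f'(s,t)=f(s,t)$. Hence the two twisted $S$-modules, and therefore the induced twisted partial actions $\Theta$ and $\Theta'$ of $G$ on $A$, literally coincide, so $[\0_g]=[\0'_g]$ in $\varsigma(A)$ for every $g\in G$ and $\psi=\psi'$. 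The main point requiring care is the diagram chase showing that ``refinement'' and ``order-preserving transversal'' transport cleanly along $\gamma$; once that is settled, the comparison of the twisted $S$-module data is forced by the intertwining identity.
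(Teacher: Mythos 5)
Your proof is correct, but it takes a genuinely different route from the paper's. The paper disposes of the statement in two lines by citing \cite[Lemma 4.1]{DK2} (equivalent admissible extensions induce \emph{equivalent} twisted partial actions) and then invoking \cref{equiv-Thetas} to pass from equivalence of $\Theta$ and $\Theta'$ to $[\0_g]=[\0'_g]$ in $\varsigma(A)$. You instead work directly with the isomorphism $\gamma$ witnessing the equivalence: you transport a refinement and an order-preserving transversal along $\gamma$, verify the intertwining identity $\nu'_{\gamma(u)}=\nu_u$, and conclude that the two twisted $S$-modules $(\af,\lb,f)$ and $(\af',\lb',f')$ are \emph{literally equal}, hence so are the induced twisted partial actions. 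This is a more self-contained argument (it essentially re-proves the portion of \cite[Lemma 4.1]{DK2} that is needed here) and it yields a stronger intermediate conclusion --- equality rather than mere equivalence of the twisted partial actions --- at the cost of a longer verification. Both arguments rest on the same hidden pillar, namely the well-definedness of the abstract kernel with respect to the choice of refinement and transversal (\cref{def:nunkernelabs} and the discussion in \cref{sec-from-abstr-kern-A-S-to-abstr-kern-A-G}): you compute $\psi'$ using the transported data $\pi'=\pi\circ\gamma\m$, $\rho'=\gamma\circ\rho$, so you need to know that any other admissible choice for the second extension gives the same class in $\varsigma(A)$. Since the paper's definition already presupposes this independence, your argument is complete as stated; it would just be worth making that dependence explicit.
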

	\begin{proof}
		By \cite[Lemma 4.1]{DK2} equivalent admissible extensions $U$ and $U'$ of $A$ by $G$ induce equivalent twisted partial actions $\Theta$ and $\Theta'$ of $G$ on $A$. Denoting by $\psi$ and $\psi'$ the corresponding abstract kernels, by \cref{equiv-Thetas} we have $\psi(g)=[\0_g]=[\0'_g]=\psi'(g)$.
	\end{proof}
	
	A general problem of extensions of semilattices of groups by groups is that of constructing all (admissible) extensions with a given abstract kernel $(A,G, \psi)$; that is, constructing all $A \xrightarrow{i} U \xrightarrow{j} G$ whose induced abstract kernel is $\psi$.
		
%
%
	\subsection{Obstruction to an extension of an abstract kernel}
	Now suppose that we are given an abstract kernel $(A,G, \psi)$. For each $g\in G$, we choose a representative 
	\begin{align}\label{0_g-in-psi(g)}
		\0_g \in \psi(g)
	\end{align}
	and denote by $\D_g$ the ideal $\ran{\0_g}$, so that $\0_g:\D_{g\m}\to\D_g$ by \cref{domain-of-0_g}. It follows from the proof of \cref{domain-of-0_g} that $\id_{\D_{h\m}}\in\psi(h\m)\psi(h)$, whence $\0_{gh}\id_{\D_{h\m}} \in \psi(gh)\psi(h\m)\psi(h)$. By \cref{propert-of-part-hom}\cref{f(g)f(h)=f(g)f(g-inv)f(gh)} we have
	\begin{align*}
	\0_g\0_h\sim\0_{gh}\id_{\D_{h\m}}.
	\end{align*} 
	Hence there exists an invertible multiplier $w_{g,h}$ of $\D_g\D_{gh}$ such that 
	\begin{align*}
	(\0_g\0_h)(a) = w_{g,h} (\0_{gh}\id_{\D_{h\m}})(a) w\m_{g,h}
	\end{align*} 
	for all $a \in \D_{h\m}\D_{h\m g\m}$. Using the notation introduced in \cref{defn-conj-by-multiplier} we may write 
	\begin{align}\label{eq:defw}
	(\0_g\0_h)(a) = \mu(w_{g,h})(\0_{gh}(a))
	\end{align}
	for all $a \in \D_{h\m}\D_{h\m g\m}$. 
	
	\begin{lem}\label{existense-of-obstr-beta}
		There exists $\beta(g,h,k)\in \U{C(\M(\D_g\D_{gh}\D_{ghk}))}$, such that 
		\begin{align}\label{eq:nun3coc}
		w_{h,k}^{\0_g}w_{g,hk} = \beta(g,h,k)w_{g,h}w_{gh,k}
		\end{align}
		for all $g,h,k\in G$.
	\end{lem}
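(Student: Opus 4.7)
The plan is to compute $\0_g\0_h\0_k(a)$ for $a\in\D_{k\m}\D_{k\m h\m}\D_{k\m h\m g\m}$ in two ways via associativity. Using \cref{eq:defw} together with \cref{eq:malphaconj}, each bracketing will express the composition as conjugation of $\0_{ghk}(a)$ by a single multiplier; comparing these, together with the surjectivity of $\0_{ghk}$ onto $S:=\D_g\D_{gh}\D_{ghk}$, will reduce the lemma to a direct application of \Cref{lem:existbeta}.

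First, using \Cref{cor:gentpa3}, I would verify that for $a\in\D_{k\m}\D_{k\m h\m}\D_{k\m h\m g\m}$ the successive images $\0_k(a)$, $\0_h\0_k(a)$, $\0_g\0_h\0_k(a)$, $\0_{hk}(a)$, $\0_g\0_{hk}(a)$ and $\0_{gh}\0_k(a)$ all lie in the appropriate domains, and that $\0_{ghk}$ maps this ideal onto $S$ (which is itself idempotent by \Cref{I^2=I-in-inverse-S}). Bracketing as $\0_g\circ(\0_h\0_k)$, I apply \cref{eq:defw} to $\0_h\0_k$, then use \cref{eq:malphaconj} to pull $\0_g$ through $\mu(w_{h,k})$, and finally \cref{eq:defw} to $\0_g\0_{hk}$, arriving (using also $\mu(mn)=\mu(m)\circ\mu(n)$) at
\begin{align*}
\0_g\0_h\0_k(a)=\mu(w_{h,k}^{\0_g}\,w_{g,hk})(\0_{ghk}(a)).
\end{align*}
Bracketing as $(\0_g\0_h)\circ\0_k$ and applying \cref{eq:defw} twice gives
\begin{align*}
\0_g\0_h\0_k(a)=\mu(w_{g,h}\,w_{gh,k})(\0_{ghk}(a)).
\end{align*}

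Comparing the two formulas and using surjectivity of $\0_{ghk}$ onto $S$, the automorphisms $\mu(w_{h,k}^{\0_g}w_{g,hk})$ and $\mu(w_{g,h}w_{gh,k})$ coincide on $S$. Each individual $w_{g',h'}$ appearing is an invertible multiplier of some larger ideal containing $S$; since $S$ is idempotent, the restrictions to $S$ remain invertible, and $w_{h,k}^{\0_g}$ is interpreted via the restriction of $\0_g$ to the isomorphism $\D_{g\m}\D_h\D_{hk}\to S$ (so that $w_{h,k}^{\0_g}\in\U{\M(S)}$ by \Cref{m->m^af-iso}). Thus both products lie in $\U{\M(S)}$, and \Cref{lem:existbeta}, applied with $S^2=S$, produces the required $\beta(g,h,k)\in\U{C(\M(S))}$ satisfying \cref{eq:nun3coc}. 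The main technical obstacle is precisely this domain bookkeeping and the correct interpretation of $w_{h,k}^{\0_g}$ as a multiplier of $S$; once these identifications are pinned down, the proof is a clean application of associativity and \Cref{lem:existbeta}.
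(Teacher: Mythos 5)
Your proposal is correct and follows essentially the same route as the paper's proof: compute $(\0_g\0_h\0_k)(a)$ with both bracketings, use \cref{eq:defw} and \cref{eq:malphaconj} to reduce each to a conjugation of $\0_{ghk}(a)$, note that $\0_{ghk}$ maps the common domain onto $\D_g\D_{gh}\D_{ghk}$, and invoke \cref{lem:existbeta}. Your extra care about restricting the multipliers to the idempotent ideal $\D_g\D_{gh}\D_{ghk}$ and interpreting $w_{h,k}^{\0_g}$ there is exactly the bookkeeping the paper leaves implicit.
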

	\begin{proof}
		By \cref{lem:tpa3} we have $\dom (\0_g \0_h \0_k)=\D_{k\m}\D_{k\m h\m}\D_{k\m h\m g\m}$. Using associativity in $\Sigma(A)$ and \cref{eq:malphaconj,eq:defw}, we compute $(\0_g\0_h\0_k)(a)$ in two different ways:
		\begin{align*}
			((\0_g\0_h)\0_k)(a) & =  \mu(w_{g,h}) (\0_{gh} \0_k (a)) \\
			& =  (\mu(w_{g,h}) \circ \mu(w_{gh,k}))(\0_{ghk}(a)) \\
			& =  \mu(w_{g,h}w_{gh,k})(\0_{ghk}(a)),\\
			(\0_g(\0_h\0_k))(a) & =  \mu(w_{h,k}^{\0_g})(\0_g \0_{hk}(a)) \\
			& =  (\mu(w_{h,k}^{\0_g}) \circ \mu(w_{g,hk})) (\0_{ghk}(a)) \\
			& =  \mu(w_{h,k}^{\0_g}w_{g,hk})(\0_{ghk}(a)).
		\end{align*}

		Since $\0_{ghk}(a)\in \D_g\D_{gh}\D_{ghk}$ (by \Cref{cor:gentpa3}), we have 
		\begin{align*}
			\mu(w_{g,h}w_{gh,k}) = \mu(w_{h,k}^{\0_g}w_{g,hk}) \mbox{ on } \D_g\D_{gh}\D_{ghk}.
		\end{align*}
		
		It remains to apply \Cref{lem:existbeta}.
	\end{proof}

	\begin{defn}
		The map $\beta$ from \cref{existense-of-obstr-beta} will be called an \textit{obstruction} to the extension of the abstract kernel $(A,G, \psi)$. 
	\end{defn}
	
	\begin{defn}\label{def:nunthetatil}
		Given an abstract kernel $(A,G, \psi)$, for each choice of representatives $\0_g\in\psi(g)$ define $\tl\0_g=\0_g|_{C(\D_{g\m})} : C(\D_{g\m}) \to C(\D_g)$.
	\end{defn}
	
	\begin{rem}
		In view of \cref{lem:trocavw} the map $\tl\0_g$ does not depend on the choice of representatives $\0_g\in\psi(g)$, and thus is uniquely determined by the abstract kernel $(A,G, \psi)$.
	\end{rem}

	\begin{prop}\label{prop:pacaocentro}
		The family $\{\tl\0_g\}_{g\in G}$ from \Cref{def:nunthetatil} is a partial action of $G$ on $C(A)$.
	\end{prop}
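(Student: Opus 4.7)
The plan is to verify directly the three axioms (PA1), (PA2), (PA3) of a partial action for the family $\{\tl\0_g : C(\D_{g\m}) \to C(\D_g)\}_{g\in G}$ on $C(A)$. Since each $\0_g$ is a semigroup isomorphism $\D_{g\m} \to \D_g$, it restricts to a bijection $C(\D_{g\m}) \to C(\D_g)$, so $\tl\0_g$ is genuinely an isomorphism between the stated subsemigroups; a short verification using \cref{m-and-n-associate-with-s}(ii) shows that each $C(\D_g)$ is in fact an ideal of $C(A)$, placing us in the setting of a candidate partial action on $C(A)$.

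For (PA1), the plan is to use that $\psi$ is unital, so $\psi(1) = [\id_A]$; this forces $\D_1 = A$ and the chosen $\0_1$ to have the form $\mu(m)$ for some $m \in \U{\M(A)}$. By \cref{m-and-n-associate-with-s}(ii), such a $\mu(m)$ acts trivially on $C(A)$, yielding $\tl\0_1 = \id_{C(A)}$. For (PA2), I would combine \cref{I^2=I-in-inverse-S} (applied both in $A$ and in the inverse subsemigroup $C(A)$) with \cref{lem:distcentro} to rewrite $C(\D_{g\m}) \cap C(\D_h) = C(\D_{g\m}\D_h)$, after which \cref{cor:ideaiscentro} gives $\tl\0_g(C(\D_{g\m}) \cap C(\D_h)) = C(\D_g) \cap C(\D_{gh})$ at once.

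The substantive step is (PA3). I would start from \cref{eq:defw}: for all $a \in \D_{h\m}\D_{h\m g\m}$ one has $(\0_g \0_h)(a) = \mu(w_{g,h})(\0_{gh}(a))$. When $a$ lies in the smaller set $C(\D_{h\m}) \cap C(\D_{h\m g\m}) = C(\D_{h\m}\D_{h\m g\m})$, the image $\0_{gh}(a)$ lies in $C(\D_g\D_{gh})$ by \cref{cor:ideaiscentro}, which is precisely the center of the semigroup on which $w_{g,h}$ is a multiplier. Then \cref{m-and-n-associate-with-s}(ii), applied inside $\D_g\D_{gh}$, shows that $w_{g,h}$ commutes with $\0_{gh}(a)$, so $\mu(w_{g,h})$ fixes $\0_{gh}(a)$ and consequently $(\tl\0_g \circ \tl\0_h)(a) = \tl\0_{gh}(a)$.

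Conceptually, the only potential obstacle is that $\{\0_g\}$ is merely a twisted partial action on $A$, with the cocycle $w$ measuring its failure of strict composability. The key insight is that once we restrict to centers this twisting becomes invisible, because central elements commute with arbitrary multipliers; apart from this observation, the proof is essentially bookkeeping with the lemmas of \cref{sec:mult,sec:phomprem}.
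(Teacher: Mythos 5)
Your proposal is correct and follows essentially the same route as the paper: (PA1) from unitality of $\psi$ and the fact that $\mu(m)$ fixes $C(A)$ pointwise, (PA2) from \cref{cor:ideaiscentro}, and (PA3) by observing that $\0_{gh}(a)\in C(\D_g\D_{gh})$ commutes with the multiplier $w_{g,h}$ via \cref{m-and-n-associate-with-s}\cref{ws=sw-for-s-in-C(S)}, so the conjugation in \cref{eq:defw} disappears. The extra preliminary checks you include (that $C(\D_g)$ is an ideal of $C(A)$ and the identification $C(\D_{g\m})\cap C(\D_h)=C(\D_{g\m}\D_h)$) are left implicit in the paper but are correct and harmless.
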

	\begin{proof}
		We have $\psi(1)=[\id_A]$, so $\0_1=\mu(m)$ for some $m\in\U{\M(A)}$. Then $\tl\0_1 = \mu(m)|_{C(A)}=\id_{C(A)}$, proving \cref{PA1} for $\tl\0$.
		
		Property \cref{PA2} for $\tl\0$ is \cref{cor:ideaiscentro}.
		
		For \cref{PA3} take $a \in C(\D_{h\m} \D_{h\m g\m})$ and write using \cref{eq:defw,cor:ideaiscentro} together with \cref{m-and-n-associate-with-s}\cref{ws=sw-for-s-in-C(S)}
		\begin{align*}
			(\tl\0_g \circ \tl\0_h)(a) = (\0_g \circ \0_h)(a) = w_{g,h} \0_{gh}(a) w\m_{g,h}=\0_{gh}(a)=\tl\0_{gh}(a).
		\end{align*} 
	\end{proof}
	
	
	\begin{lem}\label{exists-ext=>exists-triv-obstr}
		If an abstract kernel $(A,G, \psi)$ has an admissible extension then there exists a choice of representatives $\0_g\in\psi(g)$ such that the corresponding obstruction is the trivial $3$-cochain.
	\end{lem}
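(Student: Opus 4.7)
The plan is to use the twisted partial action induced by the admissible extension and show that axiom \cref{TPA6} forces the obstruction computed from it to be identically the identity.

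By \cref{def:nunkernelabs}, an admissible extension of $A$ by $G$ realizing $\psi$ yields, via a refinement and an order-preserving transversal, a twisted partial action $\Theta=(\0,w)$ of $G$ on $A$ for which $\psi(g)=[\0_g]$ for every $g$. I take these $\0_g:\D_{g\m}\to\D_g$ as the representatives of \cref{0_g-in-psi(g)}. Axiom \cref{TPA4} then asserts
\begin{align*}
(\0_g\0_h)(a)=w_{g,h}\0_{gh}(a)w\m_{g,h}=\mu(w_{g,h})(\0_{gh}(a)),\quad a\in\D_{h\m}\D_{h\m g\m},
\end{align*}
which is precisely equation \cref{eq:defw}. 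Hence the multipliers $w_{g,h}$ produced by $\Theta$ are a legitimate choice in \cref{existense-of-obstr-beta} for the definition of $\beta$.

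The key step is to invoke axiom \cref{TPA6}: for all $a\in\D_{g\m}\D_h\D_{hk}$,
\begin{align*}
\0_g(aw_{h,k})w_{g,hk}=\0_g(a)w_{g,h}w_{gh,k}.
\end{align*}
Using \cref{def:malpha}, $\0_g(aw_{h,k})=\0_g(a)\cdot w_{h,k}^{\0_g}$, so the identity above becomes $\0_g(a)\cdot w_{h,k}^{\0_g}w_{g,hk}=\0_g(a)\cdot w_{g,h}w_{gh,k}$. As $a$ varies, $\0_g(a)$ sweeps out the whole ideal $\D_g\D_{gh}\D_{ghk}$ by \cref{cor:gentpa3}, so $b\cdot w_{h,k}^{\0_g}w_{g,hk}=b\cdot w_{g,h}w_{gh,k}$ for every $b$ in this ideal. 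Substituting the defining equation \cref{eq:nun3coc} of $\beta$ and cancelling the invertible multiplier $w_{g,h}w_{gh,k}$ on the right yields $b\cdot\beta(g,h,k)=b$ for every $b\in\D_g\D_{gh}\D_{ghk}$. Since $\beta(g,h,k)\in C(\M(\D_g\D_{gh}\D_{ghk}))$, \cref{m-in-C(M(S))-iff-ms=sm} gives the symmetric equality $\beta(g,h,k)\cdot b=b\cdot\beta(g,h,k)=b$, so both the left and the right actions of $\beta(g,h,k)$ are the identity. Therefore $\beta(g,h,k)$ is the identity multiplier, and the obstruction is the trivial $3$-cochain.

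The main difficulty is not conceptual but one of bookkeeping: the multipliers $w_{g,h}$ a priori live in $\M(\D_g\D_{gh})$ and must be restricted to the smaller ideal $\D_g\D_{gh}\D_{ghk}$ before the products $w_{h,k}^{\0_g}w_{g,hk}$ and $w_{g,h}w_{gh,k}$ can be compared inside $\M(\D_g\D_{gh}\D_{ghk})$; the associativity relations of \cref{m-and-n-associate-with-s} are used implicitly to regroup these products. A related subtle point is that the $w_{g,h}$ satisfying \cref{eq:defw} are determined only up to a central factor by \cref{lem:existbeta}, so a generic choice would produce a $3$-coboundary rather than the identity; the crux is committing to the specific $w$ inherited from $\Theta$, for which \cref{TPA6} — the twisted partial action's intrinsic $3$-cocycle identity — applies.
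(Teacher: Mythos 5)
Your proposal is correct and follows essentially the same route as the paper: take the twisted partial action $(\0,w)$ induced by the admissible extension, note that \cref{TPA4} is exactly \cref{eq:defw} for this choice, and observe that \cref{TPA6}, rewritten via $\0_g(a)w_{h,k}^{\0_g}=\0_g(aw_{h,k})$ and \cref{cor:gentpa3}, is precisely \cref{eq:nun3coc} with trivial $\beta$. Your extra step of cancelling $w_{g,h}w_{gh,k}$ to force $\beta(g,h,k)$ to act as the identity, and your closing remark that the triviality depends on committing to the specific $w$ coming from $\Theta$, are both sound refinements of the same argument.
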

	\begin{proof}
		If $(A,G, \psi)$ is the abstract kernel of an admissible extension of $A$ by $G$, then $\psi(g)=[\0_g]$, where $(\0,w)$ is the induced twisted partial action of $G$ on $A$. Observe that \cref{TPA6} is exactly \eqref{eq:nun3coc} with trivial $\bt$. Indeed, $a' \in \D_g\D_{gh}\D_{ghk}$ if and only if $a' = \0_g(a)$ for some $a \in\D_{g\m}\D_h\D_{hk}$. Since $a' w_{h,k}^{\0_g} = \0_g(\0\m_g(a')w_{h,k}) = \0_g(aw_{h,k})$, then
		\begin{align*}
			a' w_{h,k}^{\0_g}w_{g,hk} = a' w_{g,h}w_{gh,k}
		\end{align*}
		 is equivalent to
		 \begin{align*}
		 	\0_g(aw_{h,k})w_{g,hk} = \0_g(a) w_{g,h}w_{gh,k},
		 \end{align*} 
		 which is \cref{TPA6}.
		
		
	\end{proof}

	\begin{lem}
		Let $\0_g \in \psi(g)$, $\0_g:\D_{g\m}\to\D_g$, and $w_{g,h}\in\U{\M(\D_g\D_{gh})}$ satisfying \eqref{eq:defw}. Then
		\begin{enumerate}[label=(\alph*)]
			\item\label{0_g(aw_h_k.w_hk_l)=...} for any $a \in C(\D_{g\m}\D_h \D_{hk})$:
			\begin{align}\label{eq:distheta}
			\0_g(aw_{h,k}) = \0_g(a)w_{h,k}^{\0_g};
			\end{align}
			
			\item\label{aw_k_l^(0_g0_h)=...} for any $a \in C(\D_g \D_{gh} \D_{ghk} \D_{ghkl})$:    
			\begin{align}\label{eq:compconj}
			a w_{k,l}^{\0_g\0_h} = w_{g,h} (a w_{k,l}^{\0_{gh}}) w\m_{g,h}.
			\end{align}
			
		\end{enumerate}
	\end{lem}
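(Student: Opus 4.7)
The plan is to derive both identities by unwinding \Cref{def:malpha}, using \Cref{cor:gentpa3} to identify the appropriate restrictions of $\0_g$ and $\0_{gh}$ as isomorphisms between ideals, and, for part \ref{aw_k_l^(0_g0_h)=...}, combining the associativity identity \eqref{eq:defw} with part (a) of \Cref{lem:malpha}.

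For part \ref{0_g(aw_h_k.w_hk_l)=...}, first note that by \Cref{cor:gentpa3} the restriction of $\0_g$ to the (idempotent) ideal $\D_{g\m}\D_h\D_{hk}$ is an isomorphism onto $\D_g\D_{gh}\D_{ghk}$, and $w_{h,k}$ restricts to a multiplier of $\D_{g\m}\D_h\D_{hk}$. Since $aw_{h,k}$ lies in this subideal, \Cref{def:malpha} applied to $t=\0_g(a)\in\D_g\D_{gh}\D_{ghk}$ and $m=w_{h,k}$ gives
\begin{align*}
\0_g(a)\,w_{h,k}^{\0_g}=\0_g\bigl(\0_g\m(\0_g(a))\,w_{h,k}\bigr)=\0_g(a\,w_{h,k}),
\end{align*}
which is \eqref{eq:distheta}.

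For part \ref{aw_k_l^(0_g0_h)=...}, set $J:=\D_g\D_{gh}\D_{ghk}\D_{ghkl}$. By \Cref{cor:gentpa3} both $\0_{gh}$ and $\0_g\0_h$ restrict to isomorphisms $\D_{h\m}\D_{h\m g\m}\D_k\D_{kl}\to J$, and on this subideal \eqref{eq:defw} says $\0_g\0_h=\mu(w_{g,h})\circ\0_{gh}$. Part (a) of \Cref{lem:malpha}, applied to these two isomorphisms with $m$ the restriction of $w_{k,l}$, then yields
\begin{align*}
w_{k,l}^{\0_g\0_h}=\bigl(w_{k,l}^{\0_{gh}}\bigr)^{\mu(w_{g,h})}
\end{align*}
as multipliers of $J$. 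Since $J$ is inverse and $a\in C(J)$, \Cref{m-and-n-associate-with-s}\cref{ws=sw-for-s-in-C(S)} gives $w_{g,h}a=aw_{g,h}$, so $\mu(w_{g,h})\m(a)=w_{g,h}\m aw_{g,h}=a$. Expanding the right-hand side via \Cref{def:malpha} with $\af=\mu(w_{g,h})$ then produces
\begin{align*}
a\,w_{k,l}^{\0_g\0_h}=\mu(w_{g,h})\bigl(a\,w_{k,l}^{\0_{gh}}\bigr)=w_{g,h}\bigl(a\,w_{k,l}^{\0_{gh}}\bigr)w_{g,h}\m,
\end{align*}
which is \eqref{eq:compconj}.

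The only real difficulty is book-keeping: one has to keep careful track of the ideals on which each multiplier and isomorphism lives, and to invoke repeatedly that multipliers restrict along idempotent ideals. Once everything has been restricted to the appropriate subideal, each equality reduces to a one-line application of the definitions collected in \Cref{sec:mult}.
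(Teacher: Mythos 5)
Your proof is correct, and part \ref{0_g(aw_h_k.w_hk_l)=...} is exactly the paper's argument: a one-line application of \cref{tm^af=af(af-inv(t)m)} after restricting $w_{h,k}$ and $\0_g$ to the ideal $\D_{g\m}\D_h\D_{hk}$. For part \ref{aw_k_l^(0_g0_h)=...} you take a mildly but genuinely different route. The paper expands $a\,w_{k,l}^{\0_g\0_h}=(\0_g\0_h)\bigl((\0_g\0_h)\m(a)\,w_{k,l}\bigr)$ directly and invokes \cref{prop:pacaocentro} to identify $(\0_g\0_h)\m(a)$ with $\0_{h\m g\m}(a)=\0_{gh}\m(a)$ for central $a$, after which \cref{eq:defw} produces the conjugation by $w_{g,h}$. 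You instead read \cref{eq:defw} as the factorization $\0_g\0_h=\mu(w_{g,h})\circ\0_{gh}$ on the relevant subideal, apply \cref{lem:malpha}(a) to get $w_{k,l}^{\0_g\0_h}=(w_{k,l}^{\0_{gh}})^{\mu(w_{g,h})}$, and then use centrality of $a$ together with \cref{m-and-n-associate-with-s}\cref{ws=sw-for-s-in-C(S)} to conclude $\mu(w_{g,h})\m(a)=a$, so that unwinding \cref{tm^af=af(af-inv(t)m)} for $\af=\mu(w_{g,h})$ gives \cref{eq:compconj}. The two arguments trade one auxiliary fact for another: the paper needs the partial action $\tl\0$ on the center in order to invert $\0_g\0_h$ pointwise, whereas you need that $w_{g,h}$ and $\mu(w_{g,h})$ restrict to the idempotent ideal $J=\D_g\D_{gh}\D_{ghk}\D_{ghkl}$ (the same restriction fact already used in \cref{lem:relpreservideais}) and that $\mu(w_{g,h})$ fixes $C(J)$. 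Both are equally short; yours has the small advantage of not invoking \cref{prop:pacaocentro} at all, at the cost of a bit more ideal-theoretic bookkeeping, all of which you handle correctly.
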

	\begin{proof}
		\cref{0_g(aw_h_k.w_hk_l)=...}. To prove \cref{eq:distheta}, it suffices to apply \cref{tm^af=af(af-inv(t)m)}:
		\begin{align*}
			\0_g(a)w_{h,k}^{\0_g} =  \0_g(\0\m_g (\0_g(a))w_{h,k}) =  \0_g(aw_{h,k}).
		\end{align*}
			
		\cref{aw_k_l^(0_g0_h)=...}. By \cref{prop:pacaocentro} we have
		\begin{align*}
			(\0_g\0_h)\m(a)=\0\m_h\0\m_g(a)=\tl\0\m_h\tl\0\m_g(a)=\tl\0_{h\m g\m}(a)=\0_{h\m g\m}(a).
		\end{align*} 
		Therefore, using also \cref{tm^af=af(af-inv(t)m),eq:defw} we may write
		\begin{align*}
			a w_{k,l}^{\0_g\0_h} & =  (\0_g\0_h)((\0_g\0_h)\m(a) w_{k,l}) \\
			& =  w_{g,h} \0_{gh}(\0_{h\m g\m}(a) w_{k,l}) w\m_{g,h} \\
			& =  w_{g,h} (a w_{k,l}^{\0_{gh}} ) w\m_{g,h},
		\end{align*}
		proving \cref{eq:compconj}.
		
	\end{proof}
	
	The next three lemmas are adaptations of Lemmas IV.8.4--IV.8.6 from \cite{Maclane}.
	
	\begin{lem}\label{lem:beta3coc}
		Let $(A,G,\psi)$ be an abstract kernel and $\beta$ an obstruction to $(A,G,\psi)$. Then $\beta \in Z^3(G,C(A))$, where $C(A)$ is considered as a partial $G$-module via $\tl\0$.
	\end{lem}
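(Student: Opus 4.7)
The plan is to derive the partial $3$-cocycle identity for $\beta$ by comparing two ways of computing the quadruple associator $\0_g\0_h\0_k\0_l$, mimicking the classical argument of \cite[Lemma IV.8.4]{Maclane} but adapted to multipliers of non-unital ideals.

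First I would check that $\beta$ belongs to the correct space of cochains: by \Cref{existense-of-obstr-beta} combined with \Cref{prop:defv'}, one has $\beta(g,h,k)\in\U{\M(C(\D_g\D_{gh}\D_{ghk}))}$, which is the natural home of a partial $3$-cochain with values in $C(A)$. Normalization $\beta(1,h,k)=\beta(g,1,k)=\beta(g,h,1)=1$ follows from choosing the representatives so that $\0_1=\id_A$ and $w_{1,g}=w_{g,1}=\id_{\D_g}$, which is possible since $\psi$ is unital.

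For the cocycle identity, I would apply \eqref{eq:nun3coc} to the five triples $(g,h,k)$, $(g,h,kl)$, $(g,hk,l)$, $(gh,k,l)$ and $(h,k,l)$. The crucial step is to take the identity for $(h,k,l)$, apply $(-)^{\0_g}$ (using \Cref{lem:malpha}(a) to convert $(m^{\0_h})^{\0_g}$ into $m^{\0_g\0_h}$) and right-multiply by $w_{g,hkl}$, obtaining
\begin{equation*}
w_{k,l}^{\0_g\0_h}\,w_{h,kl}^{\0_g}\,w_{g,hkl}=\beta(h,k,l)^{\0_g}\,w_{h,k}^{\0_g}\,w_{hk,l}^{\0_g}\,w_{g,hkl}.
\end{equation*}
Using the identities for $(g,h,kl)$ and $(g,hk,l)$ on the two sides, and then the identities for $(g,h,k)$ and $(gh,k,l)$ to replace the remaining factors $w_{h,k}^{\0_g}$ and $w_{gh,kl}$, every factor cancels once one can freely slide the central multipliers $\beta(\cdot,\cdot,\cdot)$ past the $w$'s. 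This movement is justified by combining \cref{m-and-n-associate-with-s}\cref{ws=sw-for-s-in-C(S)}, \Cref{m-in-C(M(S))-iff-ms=sm} and \Cref{lem:trocavw}; the remaining ingredient is \cref{eq:compconj}, which converts $w_{g,h}\,w_{k,l}^{\0_{gh}}\,w_{g,h}^{-1}$ into $w_{k,l}^{\0_g\0_h}$ when acting on a central element. The resulting identity is
\begin{equation*}
\beta(h,k,l)^{\0_g}\cdot\beta(g,hk,l)\cdot\beta(g,h,k)=\beta(gh,k,l)\cdot\beta(g,h,kl),
\end{equation*}
valid as invertible central multipliers of $\D_g\D_{gh}\D_{ghk}\D_{ghkl}$. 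By \Cref{prop:defv'} and \Cref{def:nunthetatil}, the operation $m\mapsto m^{\0_g}$ on central multipliers is identified with the partial action $\tl\0_g$ on $C(A)$, so this is precisely the partial $3$-cocycle condition for $\beta$ viewed as an element of $C^3(G,C(A))$ with coefficients in the partial $G$-module $(C(A),\tl\0)$.

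The main obstacle is the bookkeeping of ideals: at each substitution one must verify that every product and every conjugation takes place inside the correct two-sided ideal, since \cref{eq:compconj} holds only on central elements and the $w$'s themselves are not central. Once the domains are tracked carefully, the algebra becomes a direct analogue of the classical global group-cohomology computation, so the main work is conceptual rather than combinatorial.
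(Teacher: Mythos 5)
Your proposal is correct and follows essentially the same route as the paper: the paper also proves the identity by expanding $\0_g(\0_{g\m}(a)\,w_{k,l}^{\0_h}\,w_{h,kl})\,w_{g,hkl}$ in two ways for a central element $a\in C(\D_g\D_{gh}\D_{ghk}\D_{ghkl})$, using \cref{eq:nun3coc} for exactly the five triples you list together with \cref{eq:mbetaalpha,eq:distheta,eq:compconj} and the commutation of central multipliers, and then cancelling the invertible factor $w_{g,h}w_{gh,k}w_{ghk,l}$. The only cosmetic difference is that the paper evaluates everything on $a$ throughout rather than stating the final equality as one of central multipliers, and it does not discuss normalization, which is not needed for membership in $Z^3(G,C(A))$.
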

	\begin{proof}
		We need to show that $(\delta^3\beta)(g,h,k,l)a = a$ for all $a \in C(\D_g\D_{gh}\D_{ghk}\D_{ghkl})$, that is 
		\begin{align}\label{3-cocycle-ident-for-bt}
			\0_g(\0_{g\m}(a) \beta(h,k,l)) \beta(g,hk,l)\beta(g,h,k) = a\beta(gh,k,l)\beta(g,h,kl).
		\end{align}
		 To this end, we express 
		 $
		 	\0_g(\0_{g\m}(a)w_{k,l}^{\0_h} w_{h,kl}) w_{g,hkl}
		 $
		  in two ways using \cref{eq:mbetaalpha,eq:nun3coc,eq:distheta,eq:compconj} and \cref{m-and-n-associate-with-s}\cref{ws=sw-for-s-in-C(S)}. On the one hand
			\begin{align*}
				&\0_g(\0_{g\m}(a)\underbrace{w_{k,l}^{\0_h} w_{h,kl}}) w_{g,hkl}\\
				 &\quad= \0_g(\0_{g\m}(a)\beta(h,k,l)  w_{h,k} w_{hk,l}) w_{g,hkl} &(\text{by}\cref{eq:nun3coc})\\
				 &\quad= \0_g(\0_{g\m}(a)\beta(h,k,l)) \underbrace{w_{h,k}^{\0_g}w_{g,hk}}w\m_{g,hk} \underbrace{w_{hk,l}^{\0_g} w_{g,hkl}} &(\text{by}\cref{eq:distheta}) \\
				 &\quad= \0_g(\0_{g\m}(a)\beta(h,k,l)) \beta(g,h,k)w_{g,h}w_{gh,k}w\m_{g,hk}\beta(g,hk,l)w_{g,hk}w_{ghk,l} &(\text{by}\cref{eq:nun3coc})\\
				 &\quad= \0_g(\0_{g\m}(a)\beta(h,k,l))\beta(g,hk,l)\beta(g,h,k)w_{g,h}w_{gh,k}w_{ghk,l}.
			\end{align*}
			On the other hand
			\begin{align*}
				&\0_g(\0_{g\m}(a)w_{k,l}^{\0_h}w_{h,kl})w_{g,hkl}\\
				&\quad= a (w_{k,l}^{\0_h})^{\0_g}   w_{h,kl}^{\0_g}   w_{g,hkl} &(\text{by}\cref{eq:distheta})\\
				&\quad= aw_{k,l}^{\0_g\0_h} w_{h,kl}^{\0_g}   w_{g,hkl} &(\text{by}\cref{eq:mbetaalpha})\\
				&\quad= w_{g,h} a \underbrace{w_{k,l}^{\0_{gh}}w_{gh,kl}}w\m_{gh,kl} w\m_{g,h} \underbrace{w_{h,kl}^{\0_g}   w_{g,hkl}} &(\text{by}\cref{eq:compconj})\\
				&\quad= w_{g,h} a \beta(gh,k,l)w_{gh,k}w_{ghk,l} w\m_{gh,kl} w\m_{g,h}\beta(g,h,kl)w_{g,h}w_{gh,kl} &(\text{by}\cref{eq:nun3coc})\\
				&\quad= a \beta(gh,k,l)\beta(g,h,kl)w_{g,h}w_{gh,k}w_{ghk,l}.
			\end{align*}
		Canceling $w_{g,h}w_{gh,k}w_{ghk,l}\in\U{\M(\D_g\D_{gh}\D_{ghk}\D_{ghkl})}$, we get \cref{3-cocycle-ident-for-bt}.
	\end{proof}
	
	We shall now see how different choices of $\0$ and $w$ interfere in the obstruction to a given abstract kernel.

	\begin{lem}\label{lem:changewcohbeta}
		Another choice of $\{w_{g,h}\}_{g,h\in G}$ in \cref{eq:defw}, for the same representatives $\{\0_g\}_{g\in G}$, produces a partial $3$-cocycle cohomologous to $\beta$.
	\end{lem}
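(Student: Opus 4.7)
The plan is as follows. First, I exploit the fact that both $w_{g,h}$ and $w'_{g,h}$ satisfy \cref{eq:defw} with the \emph{same} representatives $\0_g$. Thus $\mu(w_{g,h})$ and $\mu(w'_{g,h})$ act identically on the range $\0_{gh}(\D_{h\m}\D_{h\m g\m}) = \D_g\D_{gh}$ of $\0_g\0_h$. By \cref{lem:existbeta} applied to the inverse semigroup $\D_g\D_{gh}$ (which satisfies $S^2=S$ since $A$ is inverse), there exist invertible central multipliers
\[
t_{g,h} \in \U{C(\M(\D_g\D_{gh}))}, \qquad w'_{g,h} = t_{g,h}\, w_{g,h}.
\]

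Next, I substitute $w'_{g,h} = t_{g,h} w_{g,h}$ into the defining relation \cref{eq:nun3coc} for $\beta'$. Using \cref{m->m^af-iso} to expand $(t_{h,k} w_{h,k})^{\0_g} = t_{h,k}^{\0_g} w_{h,k}^{\0_g}$, and using \cref{lem:trocavw} together with \cref{m-and-n-associate-with-s} to freely commute each central multiplier past the $w$'s when everything is viewed as acting on $C(\D_g\D_{gh}\D_{ghk})$, the identity becomes
\[
t_{h,k}^{\0_g} \cdot t_{g,hk} \cdot \bigl(w_{h,k}^{\0_g} w_{g,hk}\bigr) = \beta'(g,h,k)\cdot t_{g,h} \cdot t_{gh,k}\cdot \bigl(w_{g,h} w_{gh,k}\bigr).
\]
Replacing $w_{h,k}^{\0_g} w_{g,hk}$ by $\beta(g,h,k) w_{g,h} w_{gh,k}$ via \cref{eq:nun3coc} for $\beta$, and canceling the invertible factor $w_{g,h} w_{gh,k}$, I obtain on $C(\D_g\D_{gh}\D_{ghk})$
\[
\beta'(g,h,k) = \beta(g,h,k)\cdot t_{h,k}^{\0_g} \cdot t_{g,hk} \cdot t_{gh,k}^{-1} \cdot t_{g,h}^{-1}.
\]

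Finally, I interpret the right-hand correction as a coboundary. By \cref{prop:defv'} each $t_{g,h}$ determines (and is determined by) a multiplier of $C(\D_g\D_{gh})$, so $t=\{t_{g,h}\}_{g,h\in G}$ is a $2$-cochain in the multiplier-valued partial cohomology complex $C^\bullet(G, C(A))$ of \cite{DK3} associated to the partial action $\tl\0$ from \cref{prop:pacaocentro}. The twisted factor $t_{h,k}^{\0_g}$ is, by construction (\cref{def:malpha}), the action of $\tl\0_g$ on $t_{h,k}$, and the combination $t_{h,k}^{\0_g} \cdot t_{g,hk} \cdot t_{gh,k}^{-1} \cdot t_{g,h}^{-1}$ is exactly the $2$-coboundary $(\delta^2 t)(g,h,k)$. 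Hence $\beta' = \beta\cdot \delta^2 t$, so $\beta'$ and $\beta$ are cohomologous.

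The main obstacle is the bookkeeping of domains: one must check that each $t_{g,h}$, originally a multiplier of $\D_g\D_{gh}$, restricts appropriately to the triple product $\D_g\D_{gh}\D_{ghk}$ and commutes there with the $w$'s, and that the formal expression matches the coboundary operator used in \cite{DK3}. Both issues are handled by \cref{m-and-n-associate-with-s}, \cref{lem:trocavw}, and the fact that in an inverse semigroup every ideal is idempotent (\cref{I^2=I-in-inverse-S}), so the restriction of central multipliers to smaller ideals behaves as expected.
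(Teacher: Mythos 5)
Your proposal is correct and follows essentially the same route as the paper: apply \cref{lem:existbeta} to write $w'_{g,h}=t_{g,h}w_{g,h}$ with $t_{g,h}\in\U{C(\M(\D_g\D_{gh}))}$, substitute into \cref{eq:nun3coc}, use the multiplicativity of $m\mapsto m^{\0_g}$ and the commutation of central multipliers with the $w$'s on $C(\D_g\D_{gh}\D_{ghk})$, cancel $w_{g,h}w_{gh,k}$, and recognize the leftover factor as $(\delta^2 t)(g,h,k)$. The domain bookkeeping you flag at the end is exactly the point the paper also addresses (restriction of $t_{h,k}$ to $\D_{g\m}\D_h\D_{hk}$ and transport along $\0_g$), so nothing is missing.
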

	\begin{proof}
		Let $w'_{g,h} \in \M(\D_g\D_{gh})$ be another multiplier such that
		\begin{align*}
			(\0_g\0_h)(a) = \mu(w'_{g,h})(\0_{gh}(a))
		\end{align*} 
		for all $a \in \D_{h\m}\D_{h\m g\m}$. Since any element of $\D_g\D_{gh}$ has the form $\0_{gh}(a)$ for some $a \in \D_{h\m}\D_{h\m g\m}$, we conclude that $\mu(w_{g,h}) = \mu(w'_{g,h})$ on $\D_g\D_{gh}$. By \cref{lem:existbeta} there exists $v_{g,h}\in \U{C(\M(\D_g\D_{gh}))}$ such that 
		\begin{align}\label{w'_gh=v_gh.w_gh}
			w'_{g,h} = v_{g,h}w_{g,h}
		\end{align}
		on $\D_g\D_{gh}$. Thus, $v$ is a partial $2$-cochain, i.e. $v \in C^2(G,C(A))$ in the sense of~\cite{DK3}. Let $\beta'(g,h,k)\in \U{C(\M(\D_g\D_{gh}\D_{ghk}))}$, such that 
		\begin{align}\label{bt'-obstr-to-w'}
			(w'_{h,k})^{\0_g}w'_{g,hk} =\beta'(g,h,k)w'_{g,h}w'_{gh,k},
		\end{align}
		which exists in view of \cref{existense-of-obstr-beta}. By \cref{w'_gh=v_gh.w_gh,eq:mbetaalpha,bt'-obstr-to-w'} we have
		\begin{align}\label{bt'-obstr-to-v.w}
			v_{h,k}^{\0_g}w_{h,k}^{\0_g}v_{g,hk}w_{g,hk}=(v_{h,k}w_{h,k})^{\0_g}v_{g,hk}w_{g,hk} =\beta'(g,h,k)v_{g,h}w_{g,h}v_{gh,k}w_{gh,k}.
		\end{align}
		Now, since $v_{h,k}$ is a central multiplier of $\D_h\D_{hk}$, by \cref{m-in-C(M(S))-iff-ms=sm} its restriction to the ideal $\D_{g\m}\D_h\D_{hk}$ is a central multiplier of $\D_{g\m}\D_h\D_{hk}$. As $\0_g$ is an isomorphism $\0_g : \D_{g\m} \to \D_g$, it follows that $v_{h,k}^{\0_g}$ is a central multiplier of $\0_g(\D_{g\m}\D_h\D_{hk}) = \D_g\D_{gh}\D_{ghk}$. Therefore, in view of \cref{lem:trocavw}, all the multipliers $v$ commute with all the multipliers $w$ in \cref{bt'-obstr-to-v.w} when restricted to $C(\D_g\D_{gh}\D_{ghk})$. Hence, we may write
		\begin{align*}
			v_{h,k}^{\0_g}v_{g,hk} w_{h,k}^{\0_g}w_{g,hk} = \beta'(g,h,k)v_{g,h}v_{gh,k}w_{g,h}w_{gh,k} \mbox{ on }C(\D_g\D_{gh}\D_{ghk}).
		\end{align*}
		Applying \eqref{eq:nun3coc} to $w_{h,k}^{\0_g}w_{g,hk}$, we get
		\begin{align*}
			v_{h,k}^{\0_g}v_{g,hk} \beta(g,h,k)w_{g,h}w_{gh,k} = \beta'(g,h,k)v_{g,h}v_{gh,k}w_{g,h}w_{gh,k}.
		\end{align*}
		Canceling $w_{g,h}w_{gh,k}\in\U{\D_g\D_{gh}\D_{ghk}}$, we come to
		\begin{align*}
			v_{h,k}^{\0_g}v_{g,hk} \beta(g,h,k) = \beta'(g,h,k)v_{g,h}v_{gh,k}.
		\end{align*}
		Since all the multipliers above are central and invertible on the respective ideal, we finally obtain
		\begin{align*}
			\beta'(g,h,k) = \beta(g,h,k)(\delta^2 v)(g,h,k) \mbox{ on } C(\D_g\D_{gh}\D_{ghk}),
		\end{align*}
		which proves the lemma.
	\end{proof}

	\begin{lem}\label{lem:changethetasamebeta}
		Another choice of the representatives $\{\0_g\}_{g\in G}$ may be followed by a new selection of $\{w_{g,h}\}_{g,h\in G}$ in \cref{eq:defw} to induce the same obstruction $\beta$.
	\end{lem}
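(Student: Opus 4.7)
The plan is as follows. A new choice of representatives $\0'_g\in\psi(g)$ means $\0_g\sim\0'_g$ for every $g$, so by \Cref{lem:existbeta} there exist $\ve_g\in\U{\M(\D_g)}$ with
$$
\0'_g=\mu(\ve_g)\circ\0_g.
$$
I will define the new $2$-cochain by
$$
w'_{g,h}:=\ve_g\,\ve_h^{\0_g}\,w_{g,h}\,\ve_{gh}\m\in\U{\M(\D_g\D_{gh})},
$$
where each factor is understood after restriction to the idempotent ideal $\D_g\D_{gh}$ via~\cite[Remark 5.3]{DK2}; note that $\ve_h^{\0_g}$ makes sense in $\U{\M(\D_g\D_{gh})}$ because, by \Cref{lem:tpa3}, $\0_g$ restricts to an isomorphism $\D_{g\m}\D_h\to\D_g\D_{gh}$.

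First I would verify that $w'_{g,h}$ is a legitimate selection in \cref{eq:defw} for the new representatives, i.e., $(\0'_g\0'_h)(a)=\mu(w'_{g,h})(\0'_{gh}(a))$ for all $a\in\D_{h\m}\D_{h\m g\m}$. Unfolding $\0'_g=\mu(\ve_g)\circ\0_g$ and $\0'_h=\mu(\ve_h)\circ\0_h$, and using \cref{tm^af=af(af-inv(t)m)} to pull $\ve_h$ through $\0_g$ as $\ve_h^{\0_g}$, this reduces to the identity $\0_g\0_h=\mu(w_{g,h})\circ\0_{gh}$ we already have, together with $\0'_{gh}=\mu(\ve_{gh})\circ\0_{gh}$.

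The main step is to show that the obstruction $\bt'$ for the pair $(\0',w')$, defined by \cref{eq:nun3coc}, equals $\bt$. The plan is to massage the left hand side $(w'_{h,k})^{\0'_g}w'_{g,hk}$ in four moves: (i) apply \Cref{lem:mgthetatroca} to $\0'_g=\mu(\ve_g)\circ\0_g$ to rewrite $(w'_{h,k})^{\0'_g}=\ve_g\,(w'_{h,k})^{\0_g}\,\ve_g\m$; (ii) distribute the exponent $\0_g$ over the four factors of $w'_{h,k}$ using \cref{eq:mbetaalpha} together with the fact that $n\mapsto n^{\0_g}$ is a monoid isomorphism (\Cref{m->m^af-iso}), turning $(\ve_k^{\0_h})^{\0_g}$ into $\ve_k^{\0_g\circ\0_h}$; (iii) replace $w_{h,k}^{\0_g}w_{g,hk}$ by $\bt(g,h,k)w_{g,h}w_{gh,k}$ using \cref{eq:nun3coc} for $\bt$; (iv) apply \Cref{lem:mgthetatroca} a second time, now to $\0_g\0_h=\mu(w_{g,h})\circ\0_{gh}$, to move $w_{g,h}$ past $\ve_k^{\0_g\0_h}$, converting the exponent into $\0_{gh}$ and producing precisely the factor $\ve_k^{\0_{gh}}$ that appears in $w'_{gh,k}$. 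After cancelling the pairs $\ve_g\m\ve_g$ and $\ve_{gh}\m\ve_{gh}$, and moving $\bt(g,h,k)$ past the surrounding $\ve$'s and $w$'s by its centrality (\Cref{m-in-C(M(S))-iff-ms=sm}), the expression collapses to $\bt(g,h,k)\,w'_{g,h}w'_{gh,k}$; comparing with \cref{eq:nun3coc} for $\bt'$ and cancelling the invertible factor $w'_{g,h}w'_{gh,k}$ yields $\bt'(g,h,k)=\bt(g,h,k)$.

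The main obstacle is bookkeeping rather than any new idea: the multipliers $\ve_g$, $\ve_h^{\0_g}$, $w_{g,h}$, $\ve_k^{\0_g\0_h}$, and $\bt(g,h,k)$ each live naturally on different ideals ($\D_g$, $\D_g\D_{gh}$, $\D_g\D_{gh}$, $\D_g\D_{gh}\D_{ghk}$, and $\D_g\D_{gh}\D_{ghk}$ respectively), so every commutation and cancellation must be justified after restriction to the common idempotent ideal $\D_g\D_{gh}\D_{ghk}$ via~\cite[Remark 5.3]{DK2} and via \Cref{m-and-n-associate-with-s,lem:trocavw}. Once this bookkeeping is in place, the four moves above are the same kind of associativity-and-commutation shuffling already carried out in \Cref{lem:beta3coc,lem:changewcohbeta}.
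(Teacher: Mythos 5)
Your proposal is correct and follows essentially the same route as the paper: you choose the same new $2$-cochain $w'_{g,h}=\ve_g\,\ve_h^{\0_g}\,w_{g,h}\,\ve_{gh}\m$ (the paper's \cref{w'_gh=m_gh^0_gw_ghm^(-1)_gh} with $m_g$ in place of $\ve_g$), verify \cref{eq:defw}, and then show $\bt'=\bt$ by exactly the same sequence of moves — distributing $(\cdot)^{\0_g}$ via \Cref{m->m^af-iso} and \cref{eq:mbetaalpha}, invoking \cref{eq:nun3coc}, commuting $w_{g,h}$ past $\ve_k^{\0_g\0_h}$ via \Cref{lem:mgthetatroca}, and using centrality of $\bt(g,h,k)$. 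The only difference is presentational: the paper keeps a trailing factor $m_{ghk}$ and evaluates on a central element of $\D_g\D_{gh}\D_{ghk}$ to handle the ideal bookkeeping you correctly flag, whereas you manipulate the multiplier products directly.
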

	\begin{proof}
		Let $\0'_g\in\psi(g)$ be another representative of the class $\psi(g)$. Then $\0'_g:\D_{g\m}\to\D_g$ and there exists $m_g\in\U{\M(\D_g)}$
		\begin{align}\label{eq:defm}
		\0'_g(a) = \mu(m_g)(\0_g(a))
		\end{align} 
		for all $a\in\D_{g\m}$. Taking $a \in \D_{h\m}\D_{h\m g\m}$ we have
		\begin{align*}
			(\0'_g\0'_h)(a) &= m_g\0_g(m_h\0_h(a)m\m_h)m\m_g & (\text{by}\cref{eq:defm})\\
				&= m_gm_h^{\0_g}\0_g(\0_h(a)) (m_h^{\0_g})\m m\m_g & (\text{by}\cref{tm^af=af(af-inv(t)m)})\\
				&= m_gm_h^{\0_g}w_{g,h}\0_{gh}(a)w\m_{g,h} (m_h^{\0_g})\m m\m_g & (\text{by}\cref{eq:defw})\\
				&= m_gm_h^{\0_g}w_{g,h}m\m_{gh}\0'_{gh}(a)m_{gh}w\m_{g,h} (m_h^{\0_g})\m m\m_g & (\text{by}\cref{eq:defm})\\
				&= \mu(w'_{g,h})(\0'_{gh}(a)),
		\end{align*}
		where 
		\begin{align}\label{w'_gh=m_gh^0_gw_ghm^(-1)_gh}
			w'_{g,h}:=m_gm_h^{\0_g}w_{g,h}m\m_{gh}.
		\end{align}
		Observe using \cref{lem:mgthetatroca} that
		\begin{align}\label{eq:w'theta'}
			w'_{g,h} m_{gh}  = m_g m_h^{\0_g} w_{g,h} = m_h^{\0'_g} m_g w_{g,h}.
		\end{align}
		Let $\beta'$ be the obstruction induced by the pair $(\0',w')$. We shall show that $\bt'$ in fact coincides with $\bt$, which will prove the lemma. To this end, we compute
		\begin{align*}
			(w'_{h,k})^{\0'_g} \underbrace{w'_{g,hk} m_{ghk}} &= (w'_{h,k})^{\0'_g} m_{hk}^{\0'_g}   m_g   w_{g,hk} & (\text{by}\cref{eq:w'theta'})\\
			&= (\underbrace{w'_{h,k} m_{hk}})^{\0'_g}m_gw_{g,hk} &(\text{by \cref{m->m^af-iso}})\\
			&= ( m_k^{\0'_h} m_h w_{h,k})^{\0'_g} m_g w_{g,hk} &(\text{by}\cref{eq:w'theta'})\\
			&= (m_k^{\0'_h})^{\0'_g} m_h^{\0'_g} \underbrace{w_{h,k}^{\0'_g} m_g}w_{g,hk} &(\text{by \cref{m->m^af-iso}})\\
			&=  m_k^{\0'_g\0'_h} m_h^{\0'_g} m_g   \underbrace{w_{h,k}^{\0_g} w_{g,hk}} & (\text{by \cref{lem:mgthetatroca,lem:malpha}})\\
			&= m_k^{\0'_g\0'_h} m_h^{\0'_g} m_g \beta(g,h,k) w_{g,h} w_{gh,k} &(\text{by \cref{eq:nun3coc}})\\
			&= \beta(g,h,k)m_k^{\0'_g\0'_h} m_h^{\0'_g} m_g w_{g,h} w_{gh,k}. &(\text{since }\beta(g,h,k)\text{ is central})
		\end{align*}
		Then, for any $a \in C(\D_g\D_{gh}\D_{ghk})$, we have
		\begin{align*}
			a(w'_{h,k})^{\0'_g} w'_{g,hk} m_{ghk} &= \beta(g,h,k)am_k^{\0'_g\0'_h} m_h^{\0'_g} m_g w_{g,h} w_{gh,k} \\
			&= \beta(g,h,k) w'_{g,h} a m_k^{\0'_{gh}} (w'_{g,h})\m \underbrace{m_h^{\0'_g} m_g w_{g,h}} w_{gh,k} & (\text{by \cref{eq:compconj}})\\
			&= \beta(g,h,k) w'_{g,h} a m_k^{\0'_{gh}} \underbrace{(w'_{g,h})\m w'_{g,h}} m_{gh} w_{gh,k} & (\text{by \cref{eq:w'theta'}})\\
			&= \beta(g,h,k) w'_{g,h} a \underbrace{m_k^{\0'_{gh}} m_{gh} w_{gh,k}}\\
			&= \beta(g,h,k) w'_{g,h} a w'_{gh,k} m_{ghk} & (\text{by \cref{eq:w'theta'}})\\
			&= a\beta(g,h,k) w'_{g,h} w'_{gh,k} m_{ghk}. & (\text{since }a\text{ is central})
		\end{align*}
		Canceling the invertible multiplier $m_{ghk}$, we conclude that
		\begin{align*}
			(w'_{h,k})^{\0'_g} w'_{g,hk}=\beta(g,h,k) w'_{g,h} w'_{gh,k}
		\end{align*}
		on $C(\D_g\D_{gh}\D_{ghk})$. In view of \cref{eq:nun3coc} the latter means that $\beta'(g,h,k)=\beta(g,h,k)$.
	\end{proof}
	
	\begin{lem}\label{norm-v-in-dl^2v}
		Let $(A,\0)$ be a partial $G$-module in the sense of~\cite{DK3} and $v\in C^2(G,A)$. Then there exists $v'\in C^2(G,A)$ such that $\delta^2v=\delta^2v'$ and $v'_{1,1}=\id_A$. 
	\end{lem}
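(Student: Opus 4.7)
The strategy I would take is to adjust $v$ by a $1$-coboundary so that the $(1,1)$-entry becomes trivial; this works because $\delta^2\circ\delta^1$ is the trivial map on any cochain complex, so $\delta^2v$ is left unchanged. Concretely, I would define a $1$-cochain $u\in C^1(G,A)$ by setting
\[
u_1:=v_{1,1}^{-1}\in\U{C(\M(A))},\qquad u_g:=\id_{\D_g}\text{ for all }g\ne 1.
\]
This is a legitimate $1$-cochain in the sense of~\cite{DK3} because $v_{1,1}\in\U{C(\M(\D_1\D_1))}=\U{C(\M(A))}$, so $u_1$ lies in the right group, and the remaining $u_g$ are the identities of $\U{C(\M(\D_g))}$. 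Then I would put $v':=v\cdot\delta^1u$, with products taken pointwise in the abelian groups of invertible central multipliers of the relevant ideals.

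The verification will then split into two short checks. First, since $\delta^2\circ\delta^1$ is trivial, one obtains $\delta^2 v'=\delta^2 v\cdot\delta^2\delta^1 u=\delta^2 v$. Second, using the partial $1$-coboundary formula from~\cite{DK3}, which at $(g,h)$ reads
\[
(\delta^1 u)_{g,h}=\tl\0_g(u_h)\cdot u_{gh}^{-1}\cdot u_g
\]
after the appropriate restriction to $\D_g\D_{gh}$, together with $\tl\0_1=\id_{C(A)}$ (guaranteed by \Cref{prop:pacaocentro}), I would compute
\[
(\delta^1 u)_{1,1}=u_1\cdot u_1^{-1}\cdot u_1=u_1=v_{1,1}^{-1},
\]
and therefore $v'_{1,1}=v_{1,1}\cdot v_{1,1}^{-1}=\id_A$, as required.

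The only delicate point is making sure that the assignments $u_g$ genuinely land in the correct multiplier groups required by the notion of $1$-cochain in~\cite{DK3}, but this is immediate from the observation that $\D_1\D_1=A$, so that $u_1$ inherits the needed type from $v_{1,1}$. Beyond this, I do not anticipate any substantial obstacle; the argument is just the standard normalization of a cocycle representative by a coboundary, transcribed into the multiplier-valued partial cohomology framework.
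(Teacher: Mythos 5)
Your proof is correct and follows essentially the same route as the paper: normalize $v$ by the coboundary of a $1$-cochain built from $v_{1,1}^{-1}$, using that $\delta^2\delta^1$ is trivial. The only (immaterial) difference is the choice of $u$ away from the identity: the paper takes $u_x$ to be the restriction of $v_{1,1}^{-1}$ to $\D_x$ for every $x$, whereas you take $u_g=\id_{\D_g}$ for $g\neq 1$; both are legitimate $1$-cochains and both yield $(\delta^1u)_{1,1}=v_{1,1}^{-1}$.
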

	\begin{proof}
		Define $u\in C^1(G,A)$ by
		\begin{align*}
			u_xa=au_x=av\m_{1,1}=v\m_{1,1}a,
		\end{align*}
		where $a\in\D_x$. Then, for any $a\in \D_x\D_{xy}$, we have
		\begin{align*}
			a(\delta^1u)_{x,y}=\0_x(\0_{x\m}(a)u_y)u\m_{xy}u_x=\0_x(\0_{x\m}(a)v\m_{1,1})v_{1,1}v\m_{1,1}=\0_x(\0_{x\m}(a)v\m_{1,1}).
		\end{align*}
		Now, set $v':=v\cdot \dl^1 u\in C^2(G,A)$. We have $\dl^2v'=\dl^2v\cdot\dl^2\dl^1u=\dl^2v$ and $av'_{1,1}=v_{1,1}\0_1(\0_1(a)v\m_{1,1})=a$, as $v_{1,1}$ is a central multiplier of $A$.
	\end{proof}

	We can summarize our results in the following.
	\begin{thrm}\label{theo:main1}
		Given an abstract kernel $(A,G,\psi)$, the center $C(A)$ can be uniquely regarded as a partial $G$-module via $\tl\0 = \{\tl\0_g : C(\D_{g\m}) \to C(\D_g)\}_{g \in G}$, where $\0_g \in \psi(g)$ and $\tl\0_g=\0_g|_{C(\D_{g\m})}$. Taking the cohomology class of an obstruction $\bt$ to $(A,G,\psi)$, we have a well-defined element $\operatorname{Obs}(A,G,\psi) \in H^3(G,C(A))$. The abstract kernel $(A,G,\psi)$ has an admissible extension if and only if $\operatorname{Obs}(A,G,\psi)$ is trivial.
	\end{thrm}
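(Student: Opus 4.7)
The plan is to assemble the preliminary lemmas. The structure of $C(A)$ as a partial $G$-module via $\tl\0$ is given by \cref{prop:pacaocentro}, and its independence of the choice of representatives $\0_g\in\psi(g)$ is the content of the remark following \cref{def:nunthetatil}; this settles the first assertion.

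For the well-definedness of $\operatorname{Obs}(A,G,\psi)\in H^3(G,C(A))$, I would combine three ingredients: \cref{lem:beta3coc} says that every obstruction $\beta$ is a partial $3$-cocycle; \cref{lem:changewcohbeta} shows that for fixed representatives $\{\0_g\}$ the cohomology class of $\beta$ is independent of the choice of $\{w_{g,h}\}$; and \cref{lem:changethetasamebeta} shows that a different choice of representatives $\{\0'_g\}$ can be accompanied by suitable $\{w'_{g,h}\}$ reproducing exactly the same $\beta$. Together these imply that $[\beta]\in H^3(G,C(A))$ is an invariant of the abstract kernel.

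The forward direction of the last equivalence is \cref{exists-ext=>exists-triv-obstr}. For the converse, assume $\operatorname{Obs}(A,G,\psi)=0$ and fix $\{\0_g\}$, $\{w_{g,h}\}$ producing obstruction $\beta$. Write $\beta\m=\delta^2 v$ for some $v\in C^2(G,C(A))$; by \cref{norm-v-in-dl^2v} we may further assume $v_{1,1}=\id$. Setting $w'_{g,h}:=v_{g,h}w_{g,h}$, the computation from the proof of \cref{lem:changewcohbeta} shows that $(\0,w')$ satisfies \cref{eq:nun3coc} with trivial cocycle, which is exactly \cref{TPA6}. The axioms \cref{TPA1}, \cref{TPA3} and \cref{TPA4} are automatic from \cref{I^2=I-in-inverse-S}, \cref{lem:tpa3} and the defining identity \eqref{eq:defw}; \cref{TPA2} is achieved by choosing the representative $\0_1=\id_A\in\psi(1)$; and \cref{TPA5} is obtained by a final elementary normalization of the first row and column of $w'$ by a partial $1$-cochain (which modifies $w'$ by a coboundary $\delta^1u$ and therefore does not disturb \cref{TPA6}). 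Thus $(\0,w')$ is a bona fide twisted partial action of $G$ on $A$ whose induced abstract kernel is $\psi$, and an admissible extension realizing $\psi$ is then produced by the reverse construction --- from a twisted partial action to an extension --- provided in \cite{DK2}.

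The main obstacle is precisely this sufficiency step: after killing $\beta$ cohomologically by the adjustment $w\mapsto vw$, one must verify carefully that the remaining axioms of a twisted partial action can be imposed simultaneously on $(\0,w')$ by a normalization that preserves the already-established \cref{TPA6}, and that the abstract kernel of the extension produced from $(\0,w')$ coincides with the given $\psi$ rather than only with some partial homomorphism equivalent to it up to the freedom in the transversal $\rho$.
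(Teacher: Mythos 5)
Your plan follows the paper's proof almost step for step: the same five lemmas handle well-definedness of $\operatorname{Obs}(A,G,\psi)$ and the ``only if'' direction, and the sufficiency argument is the same adjustment $w\mapsto v w$ with $\delta^2 v=\beta\m$, normalized via \cref{norm-v-in-dl^2v}, followed by the crossed-product construction of \cite{DK2} and the refinement/transversal of \cite[Lemma 4.4]{DK3} (which is exactly what answers your worry that the resulting abstract kernel might only agree with $\psi$ up to the freedom in $\rho$: that lemma produces a transversal whose induced twisted partial action is $(\0,w')$ itself, so the kernel is $g\mapsto[\0_g]=\psi(g)$ on the nose).

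The one place where your mechanism would not work as stated is \cref{TPA5}. You propose to impose it by ``a final elementary normalization of the first row and column of $w'$ by a partial $1$-cochain.'' A $2$-coboundary cannot do this: for $u\in C^1(G,C(A))$ one has $(\delta^1u)_{1,g}a=au_1$ and $(\delta^1u)_{g,1}a=\0_g(\0_{g\m}(a)u_1)$, so $\delta^1u$ rescales the entire first row and column by the single element $u_1$ and cannot independently trivialize $w'_{1,g}$ for varying $g$. The paper avoids this: since the class of $\beta$ is independent of the choices, one may assume $\0_1=\id_A$ and $w_{1,1}=\id_A$ from the start, and \cref{norm-v-in-dl^2v} gives $v_{1,1}=\id_A$, hence $w'_{1,1}=\id_A$; then writing the already-established \cref{TPA6} for the triples $(1,1,g)$ and $(g,1,1)$ and cancelling the invertible multipliers forces $w'_{1,g}=w'_{g,1}=\id_{\D_g}$. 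So \cref{TPA5} is a consequence of \cref{TPA6} together with $w'_{1,1}=\id_A$, and no further normalization is needed or available. With that substitution your argument is complete and coincides with the paper's.
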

	\begin{proof}
		The \textit{``only if''} part is explained by \cref{prop:pacaocentro,exists-ext=>exists-triv-obstr,lem:beta3coc,lem:changewcohbeta,lem:changethetasamebeta}.
		
		It remains to prove that the trivial obstruction yields the existence of an extension for $(A,G,\psi)$. Indeed, let $(\0,w)$ be such that $\beta = \delta^2 v$. In view of \cref{norm-v-in-dl^2v} we may assume that $v_{1,1}=\id_A$. Moreover, since the class of $\bt$ does not depend on the choice of $(\0,w)$ satisfying \cref{eq:defw,0_g-in-psi(g)}, we may also take $\0_1=\id_A$ and $w_{1,1}=\id_A$. 
	
		By the proof \Cref{lem:changewcohbeta}, if we define $w'_{g,h}=v\m_{g,h}w_{g,h}$, then the corresponding $\bt'$ satisfies 
		$$
		\bt'(g,h,k)=(\delta^2 v)(g,h,k)(\delta^2 v\m)(g,h,k)=\id_{C(\D_g\D_{gh}\D_{ghk})}.
		$$
		It follows from \cref{eq:nun3coc} that $w'$ satisfies \cref{TPA6}, and moreover $w'_{1,1}=v\m_{1,1}w_{1,1}=\id_A$. Writing \cref{TPA6} for the triples $(1,1,g)$ and $(g,1,1)$ we obtain $w'_{1,g}=w'_{g,1}=\id_{\D_g}$. Thus, $(\0,w')$ is a twisted partial action of $G$ on $A$. It induces the admissible extension $A\overset{i}{\to} A*_{(\0,w')}G\overset{j}{\to} G$ constructed in \cite[Proposition 5.15]{DK2}. Moreover, by \cite[Lemma 4.4]{DK3} there is a refinement $A\overset{i}{\to} A*_{(\0,w')}G\overset{\pi}{\to}E(A)*_{(\0,w')}G\overset{\kappa}{\to} G$ and an order-preserving transversal $\rho$ of $\pi$, such that the induced twisted partial action of $G$ on $A$ coincides with $(\0,w')$. Then the abstract kernel of this extension coincides with $\psi$.
	\end{proof}
	
	
	\subsection{The description of all the extensions of an abstract kernel}\label{sec-descr-ext}
	
	\begin{defn}\label{defn-of-v'}
		Let $\Theta=(\0,w)$ be a partial action of $G$ on $A$, $\tl\0$ the restriction of $\0$ to $C(A)$ and $v \in C^2(G,C(A))$ with respect to $\tl\0$. We define 
		\begin{align*}
			v'=\{v'_{g,h}\mid g,h\in G\},
		\end{align*}
		where $v'_{g,h}$ is the extension of $v_{g,h}$ to a central multiplier of $\D_g\D_{gh}$ as in \cref{prop:defv'}. 
	\end{defn}

	\begin{lem}\label{lem:v'tpa6}
		Let $\Theta=(\0,w)$ be a twisted partial action of $G$ on $A$ and $v \in Z^2(G,C(A))$. Then $(\0,v')$ satisfies \cref{TPA6}.
	\end{lem}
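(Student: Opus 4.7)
The strategy is to verify TPA6 for $(\theta,v')$ by pointwise reduction to the $2$-cocycle identity of $v$ evaluated at a central idempotent. I would fix $a\in\D_{g\m}\D_h\D_{hk}$ and introduce the associated idempotents $e:=aa\m=a\m a\in C(\D_{g\m}\D_h\D_{hk})$, $a':=\theta_g(a)\in\D_g\D_{gh}\D_{ghk}$ and $e':=a'(a')\m=\tilde\theta_g(e)\in C(\D_g\D_{gh}\D_{ghk})$. By \cite[Remark 5.3]{DK2} each of the central multipliers $v_{h,k}$, $v_{g,hk}$, $v_{g,h}$, $v_{gh,k}$ restricts to the appropriate subideal containing $e$ or $e'$, so every expression below makes sense.

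For the left-hand side, applying the explicit formula $am'=a(a\m a)m$ from the proof of \cref{prop:defv'} gives $av'_{h,k}=v_{h,k}(e)\cdot a$, where I write $v_{h,k}(e)$ for the central element $v_{h,k}\cdot e\in C(\D_{g\m}\D_h\D_{hk})$. Pushing through the homomorphism $\theta_g$ and then using axiom (ii) of \cref{def:multiplicador} to move $v'_{g,hk}$ past the central factor, together with the analogous formula $a'v'_{g,hk}=v_{g,hk}(e')\cdot a'$, yields
\[
	\theta_g(av'_{h,k})\cdot v'_{g,hk}=\tilde\theta_g(v_{h,k}(e))\cdot v_{g,hk}(e')\cdot a'.
\]
The same manipulations on the right-hand side of TPA6 produce $\theta_g(a)\cdot v'_{g,h}\cdot v'_{gh,k}=v_{g,h}(e')\cdot v_{gh,k}(e')\cdot a'$.

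Both sides thus have the shape $c\cdot a'$ with $c$ a central element whose idempotent is $e'$; since $a'$ lies in the group component of $A$ with identity $e'$, right-cancelling $a'$ reduces TPA6 for $v'$ to the identity
\[
	\tilde\theta_g(v_{h,k}(e))\cdot v_{g,hk}(e')=v_{g,h}(e')\cdot v_{gh,k}(e')
\]
in $C(\D_g\D_{gh}\D_{ghk})$. Using \cref{def:malpha} together with $\tilde\theta_{g\m}(e')=e$, the left-hand side is nothing but $v_{h,k}^{\theta_g}(e')\cdot v_{g,hk}(e')$, so the identity becomes the evaluation at $e'$ of the equality of central multipliers $v_{h,k}^{\theta_g}\cdot v_{g,hk}=v_{g,h}\cdot v_{gh,k}$ on $C(\D_g\D_{gh}\D_{ghk})$, which is precisely the partial $2$-cocycle condition $\delta^2v=1$ (compare the computation in the proof of \cref{lem:changewcohbeta}). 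This closes the argument.

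The main technical obstacle is bookkeeping: at every step one must check that each central multiplier has been correctly restricted to a subideal and that the right form of commutation between central multipliers and elements is being invoked, using \cref{lem:trocavw} and \cref{m-in-C(M(S))-iff-ms=sm}. Once this is in place the computation is, by design, a direct reduction to the cocycle identity at~$e'$.
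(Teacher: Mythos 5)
Your proposal is correct and follows essentially the same route as the paper: evaluate \ref{TPA6} at $\0_g(a)$, expand $v'$ via the explicit formula $am'=a(a\m a)m$ from \cref{prop:defv'}, collect the central factor $e'=\tl\0_g(a\m a)$ on both sides, and reduce to the partial $2$-cocycle identity for $v$ evaluated at that central idempotent. Your bookkeeping with $e$, $e'$ and $v_{h,k}(e)$ is just a renotation of the paper's computation, so there is nothing further to add.
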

	\begin{proof}
		We need to prove that
		\begin{align}\label{TPA6-for-v'}
			(v'_{h,k})^{\0_g} v'_{g,hk} = v'_{g,h} v'_{gh,k}
		\end{align}
		 on $\D_g\D_{gh}\D_{ghk}$. An arbitrary element of $\D_g\D_{gh}\D_{ghk}$ has the form $\0_g(a)$, where $a \in \D_{g\m}\D_h\D_{hk}$. Applying the left-hand side of \cref{TPA6-for-v'} to $\0_g(a)$ and using \cref{tm^af=af(af-inv(t)m),am'=a(a-inv-a)m}, we have
		\begin{align*}
			\0_g(a) (v'_{h,k})^{\0_g} v'_{g,hk} &= \0_g(\0\m_g(\0_g(a)) v'_{h,k}) v'_{g,hk} \\
			&= \0_g(av'_{h,k}) v'_{g,hk} \\
			&= \0_g(a(a\m a)v_{h,k}) v'_{g,hk}  \\
			&= \0_g(a(a\m a)v_{h,k})(\0_g(a\m(a\m a)v\m_{h,k})\0_g(a(a\m a)v_{h,k})) v_{g,hk}.
		\end{align*}
		Now, $\0_g(a(a\m a)v_{h,k})=\0_g(a)\tl\0_g((a\m a)v_{h,k})$, as $a\m a\in C(A)$ and $v_{h,k}\in\M(C(A))$. Moreover,	since central elements commute with multipliers, we have
		\begin{align*}
			\0_g(a\m(a\m a)v\m_{h,k})\0_g(a(a\m a)v_{h,k})&=\tl\0_g(a\m(a\m a)v\m_{h,k}a(a\m a)v_{h,k})\\
			&=\tl\0_g(a\m a(a\m a)v\m_{h,k}(a\m a)v_{h,k})\\
			&=\tl\0_g((a\m a)v\m_{h,k}v_{h,k})\\
			&=\tl\0_g(a\m a).
		\end{align*}
		Hence,
		\begin{align*}
			\0_g(a) (v'_{h,k})^{\0_g} v'_{g,hk} &=\0_g(a(a\m a)v_{h,k})\tl\0_g(a\m a)v_{g,hk}\\
			&=\0_g(a)\tl\0_g((a\m a)v_{h,k})v_{g,hk}\\
			&= \0_g(a)\tl\0_g(a\m a)v_{h,k}^{\0_g} v_{g,hk}.
		\end{align*}
		Similarly,
		\begin{align*}
			\0_g(a)v'_{g,h}v'_{gh,k}=\0_g(a)\tl\0_g(a\m a) v_{g,h} v_{gh,k}.
		\end{align*}
		It remains to use the partial $2$-cocycle identity for $v$.
		
	\end{proof}
	
	\begin{defn}
		Let $\Theta=(\0,w)$ be a twisted partial action of $G$ on $A$ and $v \in C^2(G,C(A))$. Define 
		\begin{align}\label{defn-of-vTheta}
		v\Theta  = (\0, v'w),
		\end{align}
		 where $v'$ is given by \cref{defn-of-v'}.
	\end{defn}

	Following \cite{DK}, a partial $2$-cocycle $f$ of $G$ with values in a (not necessarily unital) partial $G$-module $A$ will be called \textit{normalized}, if $f(1,1)=\id_A$. This readily implies that $f(1,g)=f(g,1)=\id_{\D_g}$. As in \cite[Remark 2.6]{DK}, one can prove that any partial $2$-cocycle is cohomologous to a normalized one. The subgroup of normalized partial $2$-cocycles will be denoted by $NZ^2(G,A)$.

	\begin{lem}\label{prop:vThetatpa}
		Let $\Theta = (\0, w)$ be a twisted partial action of $G$ on $A$ and $v \in C^2(G,C(A))$. Then $v\Theta$ is also a twisted partial action of $G$ on $A$ if and only if $v\in NZ^2(G,C(A))$.
	\end{lem}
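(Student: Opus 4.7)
The plan is to verify the properties \cref{TPA1}--\cref{TPA6} for $v\Theta=(\0,v'w)$, identifying which impose constraints on $v$. Since \cref{TPA1}, \cref{TPA2}, \cref{TPA3} involve only the action part $\0$, which coincides with that of $\Theta$, they hold for $v\Theta$ automatically. For \cref{TPA4}, I would exploit that $v'_{g,h}$ is an invertible central multiplier of $\D_g\D_{gh}$: it commutes with every element of $\D_g\D_{gh}$ and satisfies $v'_{g,h}(v'_{g,h})\m=\id_{\D_g\D_{gh}}$. Applying this to $y:=w_{g,h}\0_{gh}(a)w\m_{g,h}=(\0_g\0_h)(a)\in\D_g\D_{gh}$ (using \cref{TPA4} for $\Theta$) gives $(v'_{g,h}w_{g,h})\0_{gh}(a)(v'_{g,h}w_{g,h})\m=(\0_g\0_h)(a)$, so \cref{TPA4} holds unconditionally. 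In particular $(\0,v'w)$ satisfies \cref{eq:defw}, and hence \cref{existense-of-obstr-beta} produces an obstruction $\bt''(g,h,k)\in\U{C(\M(\D_g\D_{gh}\D_{ghk}))}$ with $(v'_{h,k}w_{h,k})^{\0_g}v'_{g,hk}w_{g,hk}=\bt''(g,h,k)\,v'_{g,h}w_{g,h}v'_{gh,k}w_{gh,k}$.

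Next I would show that \cref{TPA6} for $v\Theta$ is equivalent to $v\in Z^2(G,C(A))$. The computation in the proof of \cref{lem:changewcohbeta}, which treats precisely this kind of twisting of $w$ by a central multiplier, gives $\bt''(g,h,k)=\bt(g,h,k)\,(\dl^2 v)(g,h,k)$ on $C(\D_g\D_{gh}\D_{ghk})$, where $\bt$ is the obstruction for $\Theta$. Since $\Theta$ is a twisted partial action, its \cref{TPA6} yields $\bt=\id$, so $\bt''=\dl^2 v$. Evaluating \cref{TPA6} for $v\Theta$ on $\0_g(a)$ and using the surjectivity of the restriction $\0_g\colon\D_{g\m}\D_h\D_{hk}\to\D_g\D_{gh}\D_{ghk}$ (provided by \cref{cor:gentpa3}) shows that \cref{TPA6} for $v\Theta$ is equivalent to $\bt''(g,h,k)=\id_{\D_g\D_{gh}\D_{ghk}}$, which by \cref{prop:defv'} reduces to $\bt''|_{C(\D_g\D_{gh}\D_{ghk})}=\id$, i.e.\ $\dl^2 v=\id$.

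Finally, \cref{TPA5} is what forces the normalization. Since $w_{1,g}=w_{g,1}=\id_{\D_g}$ by \cref{TPA5} for $\Theta$, \cref{TPA5} for $v\Theta$ amounts to $v'_{1,g}=v'_{g,1}=\id_{\D_g}$, which by the uniqueness part of \cref{prop:defv'} is equivalent to $v_{1,g}=v_{g,1}=\id_{C(\D_g)}$. If $v\in NZ^2(G,C(A))$, then $v_{1,1}=\id$ together with the $2$-cocycle identity applied to the triples $(g,1,1)$ and $(1,1,k)$ forces $v_{g,1}=v_{1,g}=\id$ by the standard normalization argument. Conversely, specializing \cref{TPA5} for $v\Theta$ to $g=1$ yields $v'_{1,1}=\id_A$ and hence $v_{1,1}=\id_{C(A)}$, so $v$ is normalized. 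Combining this with the analysis of \cref{TPA6} gives the claimed equivalence. The main obstacle is the identification $\bt''=\bt\cdot\dl^2 v$, which is handled by the detailed computation already carried out in \cref{lem:changewcohbeta}; the remaining delicate point is to use centrality of $v'_{g,h}$ carefully, both to collapse the conjugation in \cref{TPA4} and to pass the equality of central multipliers in \cref{TPA6} between the whole ideal and its center via \cref{prop:defv'}.
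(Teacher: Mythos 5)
Your proof is correct, and its overall skeleton matches the paper's: \cref{TPA1}--\cref{TPA3} are automatic, \cref{TPA4} follows from centrality and invertibility of $v'_{g,h}$, \cref{TPA5} is exactly the normalization condition, and \cref{TPA6} is exactly the $2$-cocycle condition. Where you genuinely diverge is in the treatment of \cref{TPA6}. The paper proves the forward implication by a direct computation: it first establishes the auxiliary \cref{lem:v'tpa6} (that $(\0,v')$ alone satisfies \cref{TPA6} when $v\in Z^2$), then splits $a(v'w)_{g,h}$ as $(a\m a)v'_{g,h}\cdot aw_{g,h}$ and recombines, handling the interaction between $v'$ and $w$ by hand; the converse is obtained separately from the reformulation of \cref{TPA6} as $w_{h,k}^{\0_g}w_{g,hk}=w_{g,h}w_{gh,k}$ noted in \cref{exists-ext=>exists-triv-obstr}. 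You instead observe that $(\0,v'w)$ still satisfies \cref{eq:defw}, so the obstruction calculus applies, and \cref{lem:changewcohbeta} gives $\bt''=\bt\cdot\dl^2v$ with $\bt=\id$ because $\Theta$ is a twisted partial action; both directions of the equivalence ``\cref{TPA6} for $v\Theta$ $\iff$ $\dl^2v=\id$'' then drop out at once, using the injectivity of the restriction $C(\M(I))\to\M(C(I))$ from \cref{prop:defv'} to pass between the whole ideal and its center. This buys you a shorter argument that reuses work already done (and makes \cref{lem:v'tpa6} unnecessary), at the cost of leaning on the identification, implicit in \cref{lem:changewcohbeta}, between $2$-cochains valued in $\M(C(\D_g\D_{gh}))$ and central multipliers of $\D_g\D_{gh}$ --- a point you do flag correctly via \cref{prop:defv'}. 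The small remaining details (that $v'_{g,h}$ is invertible because the correspondence of \cref{prop:defv'} is a monoid isomorphism, and that $\bt$ is uniquely determined by \cref{eq:nun3coc} since $w_{g,h}w_{gh,k}$ restricts to an invertible multiplier of $\D_g\D_{gh}\D_{ghk}$) are routine and consistent with what the paper itself uses.
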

	\begin{proof}
		Clearly, $v\Theta$ satisfies \cref{TPA1,TPA2,TPA3} if and only if $\Theta$ does, so we need to prove that $v\Theta$ satisfies \cref{TPA4,TPA5,TPA6} if and only if $v\in NZ^2(G,C(A))$.
		 
		Let $v\in NZ^2(G,C(A))$. Condition \cref{TPA4} for $v\Theta$ follows from \cref{m-in-C(M(S))-iff-ms=sm}:
		\begin{align*}
			(v'_{g,h}w_{g,h}) \0_{gh}(a) (v'_{g,h}w_{g,h})\m & =  v'_{g,h}w_{g,h} \0_{gh}(a) w\m_{g,h}(v'_{g,h})\m \\
			& =  w_{g,h} \0_{gh}(a) w\m_{g,h} v'_{g,h}(v'_{g,h})\m \\
			& =  w_{g,h} \0_{gh}(a) w\m_{g,h}\\
			& =  \0_g \circ \0_h(a),\  a\in\D_{h\m}\D_{h\m g\m}.
		\end{align*}
		
		For \cref{TPA5} we use the fact that $v$ is normalized: 
		\begin{align*}
			a(v'w)_{1,g} = av'_{1,g}=a(a\m a)v_{1,g} = a(a\m a) = a,\  a\in\D_g.
		\end{align*}
		
		Finally, for \cref{TPA6} observe that 
		\begin{align}\label{a(v'w)_gh=(a-inv a)v'_gh.aw_gh}
			a(v'w)_{g,h}=(a(a\m a)v_{g,h})w_{g,h}=(a\m a)v_{g,h}\cdot aw_{g,h}=(a\m a)v'_{g,h}\cdot aw_{g,h},
		\end{align} 
		where $a\in\D_g\D_{gh}$. It follows that for any $a\in\D_{g\m}\D_h\D_{hk}$ we have
		\begin{align*}
			\0_g(a(v'w)_{h,k})(v'w)_{g,hk}&=\0_g((a\m a)v'_{h,k}\cdot aw_{h,k})v'_{g,hk}w_{g,hk} & \text{(by \cref{a(v'w)_gh=(a-inv a)v'_gh.aw_gh})}\\
			&=\0_g((a\m a)v'_{h,k})v'_{g,hk}\cdot\0_g(aw_{h,k})w_{g,hk} & \text{($v'_{g,hk}$ is central)}\\
			&=\0_g(a\m a)v'_{g,h} v'_{gh,k}\cdot\0_g(a)w_{g,h} w_{gh,k} & \text{(by \cref{lem:v'tpa6})}\\
			&=(\0_g(a\m a)v'_{g,h} \cdot\0_g(a)w_{g,h}) v'_{gh,k}w_{gh,k} & \text{($v'_{g,hk}$ is central)}\\
			&=\0_g(a)(v'w)_{g,h} (v'w)_{gh,k}. & \text{(by \cref{lem:v'tpa6})}
		\end{align*}
		
		Conversely, suppose that \cref{TPA4,TPA5,TPA6} hold for $v\Theta$. Condition \cref{TPA5} readily implies that $v_{1,1}=\id_A$. It remains to prove that $(\delta^2v)(g,h,k)$ is a trivial multiplier. As it was observed in the proof of \cref{exists-ext=>exists-triv-obstr}, condition \cref{TPA6} for $\Theta$ is equivalent to
		\begin{align*}
			w_{h,k}^{\0_g}w_{g,hk} = w_{g,h}w_{gh,k} \mbox{ on }\D_g\D_{gh}\D_{ghk}.
		\end{align*}
		Applying this observation to $v\Theta$, we see that
		\begin{align*}
			(v_{h,k}w_{h,k})^{\0_g}v_{g,hk}w_{g,hk} = v_{g,h}w_{g,h}v_{gh,k}w_{gh,k} \mbox{ on }C(\D_g\D_{gh}\D_{ghk}).
		\end{align*}
		Since the values of $v$ are central multipliers and $(v_{h,k}w_{h,k})^{\0_g}=v_{h,k}^{\0_g}w_{h,k}^{\0_g}$ by \cref{m->m^af-iso}, we conclude that
		\begin{align*}
			v_{h,k}^{\0_g}v_{g,hk} = v_{g,h}v_{gh,k} \mbox{ on }C(\D_g\D_{gh}\D_{ghk}),
		\end{align*}
		the latter being equivalent to $(\delta^2v)(g,h,k) = \id_{C(\D_g\D_{gh}\D_{ghk})}$.
	\end{proof}
	
	\begin{lem}\label{Theta-equiv-vTheta}
		Let $\Theta = (\0, w)$ be a twisted partial action of $G$ on $A$ and $v \in NZ^2(G,C(A))$ with respect to $\tl\0$. Then $v\Theta$ is equivalent to $\Theta$ if and only if $v\in B^2(G,C(A))$.
	\end{lem}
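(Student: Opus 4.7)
The strategy is to identify the equivalence data $\ve=\{\ve_g\}$ with a $1$-cochain $u\in C^1(G,C(A))$ and show that $v$ is precisely its coboundary under $\delta^1$.

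For the \emph{only if} direction, I would suppose $v\Theta$ is equivalent to $\Theta$ via $\ve=\{\ve_g\in\U{\M(\D_g)}\}$. Since both twisted partial actions share the same $\0$, \cref{ETPA2} reduces to $\mu(\ve_g)=\id_{\D_g}$, which by \cref{m-in-C(M(S))-iff-ms=sm} forces $\ve_g\in\U{C(\M(\D_g))}$. By \cref{prop:defv'} each $\ve_g$ is then the unique central extension of some $u_g\in\U{\M(C(\D_g))}$, which assembles into a $1$-cochain $u\in C^1(G,C(A))$. Next, I would rewrite the right-hand side of \cref{ETPA3} using the multiplier version of \cref{eq:distheta}, namely $\0_g(a\ve_h)=\ve_h^{\0_g}\0_g(a)$, and move every central factor past $w_{g,h}$ by \cref{lem:trocavw}. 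Cancelling the invertible multiplier $w_{g,h}$ and letting $\0_g(a)$ range over $\D_g\D_{gh}$, the identity becomes $\ve_{gh}v'_{g,h}=\ve_g\ve_h^{\0_g}$ as multipliers of $\D_g\D_{gh}$. Restricting to the center $C(\D_g\D_{gh})$, this is precisely $v_{g,h}=u_g\tl\0_g(u_h)u\m_{gh}$, i.e.\ $v=\delta^1 u\in B^2(G,C(A))$.

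For the \emph{if} direction, given $v=\delta^1 u$ with $u\in C^1(G,C(A))$, I would let $\ve_g$ be the central extension of $u_g$ supplied by \cref{prop:defv'}. Condition \cref{ETPA1} is automatic since the $\0$-part is unchanged, \cref{ETPA2} follows from the centrality of each $\ve_g$, and \cref{ETPA3} is obtained by reversing the computation of the first direction, invoking the coboundary identity $v_{g,h}=u_g\tl\0_g(u_h)u\m_{gh}$ at the appropriate step.

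The main obstacle I anticipate is the careful manipulation of \cref{ETPA3}: one must verify that the restriction of $\ve_h$ to the ideal $\D_{g\m}\D_h$ transports through the isomorphism $\0_g$ to a central multiplier of $\D_g\D_{gh}$ whose image under \cref{prop:defv'} represents $\tl\0_g(u_h)$. This is a compatibility between \cref{lem:malpha}(a), the standard restriction of central multipliers to ideals, and the uniqueness clause of \cref{prop:defv'}. Once this compatibility is in place, both directions reduce to matched computations pitting $v'w$ against $w$ and $\ve$ against $u$, and the equivalence $v\Theta\sim\Theta\iff v\in B^2(G,C(A))$ follows.
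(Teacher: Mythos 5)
Your proposal is correct and follows essentially the same route as the paper's proof: condition \cref{ETPA2} forces $\ve_g\in\U{C(\M(\D_g))}$ via \cref{m-in-C(M(S))-iff-ms=sm}, one commutes $\ve_{gh}$ past $w_{g,h}$ and cancels the invertible $w_{g,h}$ in \cref{ETPA3}, and the surviving identity, read on $C(\D_g\D_{gh})$, is exactly the coboundary condition $v=\delta^1 u$ (with \cref{prop:defv'} mediating between $u_g\in\U{\M(C(\D_g))}$ and $\ve_g\in\U{C(\M(\D_g))}$ in both directions). The only cosmetic difference is that you phrase the key identity at the level of multipliers ($\ve_{gh}v'_{g,h}=\ve_g\ve_h^{\0_g}$ on $\D_g\D_{gh}$) where the paper works element-wise and then specializes to central elements; the compatibility you flag between restriction, $(\cdot)^{\0_g}$, and \cref{prop:defv'} is indeed the point that makes the two formulations agree.
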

	\begin{proof}
		Using \cref{defn-equiv-tw-pact}, we see that $v\Theta$ is equivalent to $\Theta$ if and only if there exists $\ve_g\in\U{\M(\D_g)}$, such that
		\begin{align}
			\0_g(a)&=\ve_g\0_g(a)\ve\m_g\mbox{ for all }a\in\D_{g\m},\label{0_g=ve_g.0_g.ve^(-1)_g}\\
			\0_g(a)(\0_g(a\m a)v_{g,h})w_{g,h}\ve_{gh}&=\ve_g\0_g(a\ve_h)w_{g,h}\mbox{ for all }a\in\D_{g\m}\D_h.\label{w-equiv-vw}
		\end{align}
		By \cref{m-in-C(M(S))-iff-ms=sm} condition \cref{0_g=ve_g.0_g.ve^(-1)_g} is equivalent to the fact that $\ve_g\in\U{C(\M(\D_g))}$. It follows that $w_{g,h}\ve_{gh}=\ve_{gh}w_{g,h}$ on $\D_g\D_{gh}$, so that we may cancel $w_{g,h}$ in \cref{w-equiv-vw} to get
		\begin{align*}
			\0_g(a)(\0_g(a\m a)v_{g,h})\ve_{gh}=\ve_g\0_g(a\ve_h)\mbox{ for all }a\in\D_{g\m}\D_h.
		\end{align*}
		This is equivalent to
		\begin{align}\label{a(a^(-1)a)v_g_h=0_g(0^(-1)_g(a)ve_h)ve^(-1)_gh.ve_g}
		a(a\m a)v_{g,h}=\0_g(\0\m_g(a)\ve_h)\ve\m_{gh}\ve_g\mbox{ for all }a\in\D_g\D_{gh}.
		\end{align}
		If $a\in C(\D_g\D_{gh})$, then we obtain 
		\begin{align}\label{av_g_h=a(dl^1.ve)(g_h)}
			av_{g,h}=a(\dl^1\ve)(g,h),
		\end{align}
		i.e. $v\in B^2(G,C(A))$.
		
		Conversely, if $v=\dl^1\ve$ and $a\in\D_g$, then writing \cref{av_g_h=a(dl^1.ve)(g_h)} for $a\m a\in C(\D_g)$ and multiplying by $a$ on the left we obtain \cref{a(a^(-1)a)v_g_h=0_g(0^(-1)_g(a)ve_h)ve^(-1)_gh.ve_g}, which gives \cref{w-equiv-vw}.
	\end{proof}

	\begin{lem}\label{(0_w)-equiv-(0'_w')}
		Let $\Theta=(\0,w)$ and $\Theta'=(\0',w')$ be two twisted partial actions of $G$ on $A$ with the same domains $\{\D_g\}_{g\in G}$, such that \cref{eq:defm,w'_gh=m_gh^0_gw_ghm^(-1)_gh} hold for some $m_g\in\U{\M(\D_g)}$. Then $\Theta$ and $\Theta'$ are equivalent.
	\end{lem}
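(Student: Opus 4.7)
The natural guess is to take $\ve_g:=m_g$ for every $g\in G$, and show that the family $\{\ve_g\}_{g\in G}$ witnesses the equivalence of $\Theta$ and $\Theta'$ in the sense of \cref{defn-equiv-tw-pact}. Condition \cref{ETPA1} is part of the hypothesis, and \cref{ETPA2} is immediate from the very definition of $\mu$, since
\[
\ve_g\0_g(a)\ve_g\m=m_g\0_g(a)m_g\m=\mu(m_g)(\0_g(a))=\0'_g(a)
\]
for all $a\in\D_{g\m}$, by \cref{eq:defm}.

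The remaining task is \cref{ETPA3}, and this is where essentially all the content sits. Given $a\in\D_{g\m}\D_h$, the plan is to transform the right-hand side $\ve_g\0_g(a\ve_h)w_{g,h}$ into $\0'_g(a)w'_{g,h}\ve_{gh}$ in three moves. First, I would use the key identity \cref{tm^af=af(af-inv(t)m)} (equivalently \cref{eq:distheta}) applied to $\0_g$ restricted to the ideal $\D_{g\m}\D_h$, which by \cref{lem:tpa3} maps isomorphically to $\D_g\D_{gh}$, to rewrite
\[
\0_g(am_h)=\0_g(a)\,m_h^{\0_g}.
\]
Second, using \cref{eq:defm} in the form $m_g\0_g(a)=\0'_g(a)m_g$, I would pull $\0'_g(a)$ out to the left. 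Third, I would invoke the hypothesis \cref{w'_gh=m_gh^0_gw_ghm^(-1)_gh} rewritten as $m_g m_h^{\0_g}w_{g,h}=w'_{g,h}m_{gh}$, which compresses the trailing multipliers exactly into $w'_{g,h}\ve_{gh}$.

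Putting these three moves together yields the chain
\[
\ve_g\0_g(a\ve_h)w_{g,h}=m_g\0_g(a)m_h^{\0_g}w_{g,h}=\0'_g(a)\bigl(m_g m_h^{\0_g}w_{g,h}\bigr)=\0'_g(a)w'_{g,h}\ve_{gh},
\]
which is \cref{ETPA3}. The only point demanding care is keeping track of which ideal each multiplier lives on: $m_h$ restricts to a multiplier of $\D_{g\m}\D_h\lhd\D_h$, so $m_h^{\0_g}\in\M(\D_g\D_{gh})$ by \cref{def:malpha}, and all the products in the display take place inside $\M(\D_g\D_{gh})$, where the associativity relations of \cref{m-and-n-associate-with-s} are available. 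I expect this bookkeeping of domains, rather than any deeper obstacle, to be the main thing to verify carefully.
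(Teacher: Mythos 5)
Your proposal is correct and is essentially the paper's own argument: both take $\ve_g=m_g$ and verify \cref{ETPA3} by combining the rearranged hypothesis $w'_{g,h}m_{gh}=m_gm_h^{\0_g}w_{g,h}$ with \cref{tm^af=af(af-inv(t)m)} and \cref{eq:defm}. The only cosmetic difference is that you run the chain from $\ve_g\0_g(a\ve_h)w_{g,h}$ towards $\0'_g(a)w'_{g,h}\ve_{gh}$ using $m_h^{\0_g}$, whereas the paper goes in the other direction via $m_h^{\0'_g}$ (using \cref{lem:mgthetatroca}); your domain bookkeeping for $m_h^{\0_g}\in\M(\D_g\D_{gh})$ is exactly the point that needs care, and you handle it correctly.
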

	\begin{proof}
		It suffices to show that \cref{w'_gh=m_gh^0_gw_ghm^(-1)_gh} implies
		\begin{align*}
			\0'_g(a)w'_{g,h}m_{gh}=m_g\0_g(am_h)w_{g,h} \mbox{ for all }a\in\D_{g\m}\D_h.
		\end{align*}
		Multiplying both sides of \cref{w'_gh=m_gh^0_gw_ghm^(-1)_gh} by $m_{gh}$ on the right and applying them to $\0'_g(a)$, $a\in\D_{g\m}\D_h$, on the left, we obtain
		\begin{align*}
			\0'_g(a)w'_{g,h}m_{gh}&=\0'_g(a)m_h^{\0'_g} m_g w_{g,h} & \mbox{(by \cref{eq:w'theta'})}\\
			&= \0'_g(am_h)m_gw_{g,h} & \mbox{(by \cref{tm^af=af(af-inv(t)m)})}\\ 
			&= m_g\0_g(am_h)w_{g,h}. & \mbox{(by \cref{eq:defm})}
		\end{align*} 
	\end{proof}

	\begin{thrm}\label{theo:main2}
		If an abstract kernel $(A,G,\psi)$ has an admissible extension, then the set of equivalence classes of admissible extensions of $(A,G,\psi)$ is in a one-to-one correspondence with the set $H^2(G, C(A))$.
	\end{thrm}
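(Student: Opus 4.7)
The strategy is to exhibit the set of equivalence classes of admissible extensions of $(A,G,\psi)$ as a torsor over $H^2(G,C(A))$, with base point corresponding to a fixed extension whose existence is ensured by \cref{theo:main1}. The first step is to recall from \cref{sec-from-abstr-kern-A-S-to-abstr-kern-A-G} and the proof of \cref{theo:main1} that equivalence classes of admissible extensions of $A$ by $G$ realizing $\psi$ are in bijection with equivalence classes of twisted partial actions $(\0,w)$ of $G$ on $A$ with $\0_g\in\psi(g)$ for every $g\in G$: given an extension, the induced twisted partial action from \cref{def:nunkernelabs} has this property, while the partial crossed product construction of \cite[Proposition 5.15]{DK2} combined with the refinement argument at the end of the proof of \cref{theo:main1} recovers an extension realizing the prescribed kernel; preservation of equivalence in both directions relies on \cite[Lemma 4.1]{DK2} and \cref{prop:nunequivmesmokernelabs}. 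Hence it suffices to parametrize equivalence classes of twisted partial actions realizing $\psi$ by $H^2(G,C(A))$.

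Fix $\Theta_0=(\0_0,w_0)$ realizing $\psi$ (it exists by \cref{theo:main1}) and define
\[
\Phi\colon H^2(G,C(A))\longrightarrow\{[\Theta]:\Theta\text{ is a twisted partial action of }G\text{ on }A\text{ realizing }\psi\},\quad [v]\mapsto[v\Theta_0],
\]
where $v$ is first replaced by a cohomologous normalized representative in $NZ^2(G,C(A))$ so that $v\Theta_0$ is itself a twisted partial action by \cref{prop:vThetatpa}. For well-definedness and injectivity I would establish the following strengthening of \cref{Theta-equiv-vTheta}: for $v,v'\in NZ^2(G,C(A))$, one has $v\Theta_0\sim v'\Theta_0$ if and only if $v(v')\m\in B^2(G,C(A))$. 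The key point is that the extension operation of \cref{prop:defv'} is a monoid homomorphism (by uniqueness of \cref{m'a=m(aa-inv)a}), yielding the identity $(v(v')\m)(v'\Theta_0)=v\Theta_0$; the claim then follows by applying \cref{Theta-equiv-vTheta} with base $v'\Theta_0$ and cocycle $v(v')\m$.

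The main content is surjectivity of $\Phi$. Given a twisted partial action $\Theta=(\0,w)$ realizing $\psi$, since $\0_g$ and $\0_{0,g}$ both represent $\psi(g)$, one obtains $m_g\in\U{\M(\D_g)}$ with $\0_g=\mu(m_g)\circ\0_{0,g}$ as in \cref{eq:defm}. Following \cref{lem:changethetasamebeta}, set $w''_{g,h}=m_gm_h^{\0_{0,g}}w_{0,g,h}m_{gh}\m$ and $\Theta''=(\0,w'')$. A direct check shows that $\Theta''$ is a twisted partial action, and \cref{(0_w)-equiv-(0'_w')} then gives $\Theta''\sim\Theta_0$. Since $\Theta$ and $\Theta''$ share the same $\0$ and domains, the argument from the proof of \cref{lem:changewcohbeta} produces $v_{g,h}\in\U{C(\M(\D_g\D_{gh}))}$ with $w_{g,h}=v_{g,h}w''_{g,h}$; triviality of the obstructions of both $\Theta$ and $\Theta''$ forces the family $v=\{v_{g,h}\}$ to satisfy the 2-cocycle identity, producing via \cref{prop:defv'} an element of $Z^2(G,C(A))$. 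After normalization we obtain $\Theta=v\Theta''\sim v\Theta_0$, which places $[\Theta]$ in the image of $\Phi$.

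The principal obstacle I anticipate is the implicit compatibility $\Theta''\sim\Theta_0\Rightarrow v\Theta''\sim v\Theta_0$ used in the last line, which must be verified directly from \cref{defn-equiv-tw-pact} using the witnessing multipliers $\ve_g$ of $\Theta''\sim\Theta_0$; the verification rests on the fact that these $\ve_g$ commute with the central multipliers $v'_{g,h}$ by \cref{m-in-C(M(S))-iff-ms=sm}. A secondary subtlety is to ensure that the cocycle $v$ produced in the surjectivity step takes values in the correct partial $G$-module $(C(A),\tl\0_0)$, which follows from \cref{prop:pacaocentro} together with the fact that $\tl\0_g$ depends only on $\psi(g)$, so that $\tl\0_g=\tl\0_{0,g}$ for all $g\in G$.
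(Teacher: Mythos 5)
Your plan is correct and is essentially the paper's proof: both reduce to crossed-product extensions $A\to A*_\Theta G\to G$, let $[v]$ act via $\Theta\mapsto v\Theta$, and invoke \cref{prop:vThetatpa}, \cref{Theta-equiv-vTheta}, \cref{lem:changethetasamebeta} and \cref{(0_w)-equiv-(0'_w')} in the same places. The one genuine divergence is the orientation of your surjectivity argument, and it is exactly what creates the ``principal obstacle'' you flag. You transport $w_0$ onto the representatives $\0_g$, obtaining $\Theta''=(\0,w'')\sim\Theta_0$ and $\Theta=v\Theta''$, and must then prove the compatibility $v\Theta''\sim v\Theta_0$; this is true and checkable as you indicate (the extensions $v'_{g,h}$ lie in $C(\M(\D_g\D_{gh}))$ by \cref{prop:defv'}, so they slide past the witnessing multipliers and past $w''_{g,h}$ in \cref{ETPA3}), but it is avoidable extra work. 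The paper transports in the opposite direction: writing $\0_{0,g}=\mu(m_g)\circ\0_g$ and $w''_{g,h}=m_gm_h^{\0_g}w_{g,h}m\m_{gh}$, the pair $(\0_0,w'')$ is equivalent to $\Theta$ by \cref{(0_w)-equiv-(0'_w')} and is \emph{literally} $v\Theta_0$, since $\mu(w''_{g,h})=\mu(w_{0,g,h})$ yields $w''_{g,h}=v_{g,h}w_{0,g,h}$ with $v_{g,h}$ central by \cref{lem:existbeta}; hence $\Theta\sim v\Theta_0$ with no compatibility lemma needed. Your treatment of well-definedness and injectivity via multiplicativity of $v\mapsto v'$ together with \cref{Theta-equiv-vTheta} is the paper's freeness argument in slightly different clothing, and your closing remark that the coefficient module $(C(A),\tl\0)$ depends only on $\psi$ is indeed the point that makes all the groups $H^2(G,C(A))$ agree.
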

	\begin{proof}
		We shall adapt the proof of~\cite[Theorem IV.8.8]{Maclane}, namely, we shall prove that $H^2(G, C(A))$ acts transitively and freely on the set of equivalence classes of admissible extensions of $A$ by $G$.
		
		By \cite[Theorem 6.12]{DK2} it is enough to consider the classes of admissible extensions of the form 
		\begin{align}\label{A->A*G->G}
			A \to A*_\Theta G \to G,
		\end{align}
		where $\Theta = (\0, w)$ is a twisted partial action of $G$ on $A$.
			
		To define the action of $H^2(G,C(A))$, we represent an element of $H^2(G,C(A))$ as the class $[v]$ of $v \in NZ^2(G,C(A))$. Then we define the result of the action of $[v]$ on the class of the extension \cref{A->A*G->G} as being the class of
		\begin{align*}
			A \to A*_{v\Theta} G \to G,
		\end{align*}
		where $v\Theta$ is given by \Cref{defn-of-vTheta}. In view of \Cref{prop:vThetatpa} this is well defined.
			
		Let us prove that the action is transitive. Given two twisted partial actions $\Theta=(\0, w)$ and $\Theta'=(\0', w')$ of $G$ on $A$, such that $\psi$ is the abstract kernel of both $A*_\Theta G$ and $A*_{\Theta'} G$ seen as extensions of $A$ by $G$, we have $[\0_g]=\psi(g)=[\0'_g]$ in $\varsigma(A)$. Then there is $m_g\in\U{\M(\D_g)}$, such that \cref{eq:defm} holds. Let $w''_{g,h}$ be defined by the right hand side of \cref{w'_gh=m_gh^0_gw_ghm^(-1)_gh}. By \cref{lem:changethetasamebeta} the pair $(\0',w'')$ determines the same (trivial) obstruction as $\Theta$, i.e. it is a twisted partial action of $G$ on $A$. Moreover, $(\0',w'')$ is equivalent to $\Theta$ by \cref{(0_w)-equiv-(0'_w')}. On the other hand, by the proof of \cref{lem:changethetasamebeta} we see that
		\begin{align*}
			\mu(w'_{g,h})(\0'_{gh}(a))=(\0'_g\0'_h)(a)=\mu(w''_{g,h})(\0'_{gh}(a)) \mbox{ for all }a\in\D_{g\m}\D_{h\m g\m}.
		\end{align*}
		Hence, by \cref{lem:existbeta} there is $v_{g,h}\in\U{C(\M(\D_g\D_{gh}))}$, such that $w''_{g,h}=v_{g,h}w'_{g,h}$. It follows that $\Theta$ is equivalent to $v\Theta'$, where $v \in NZ^2(G,C(A))$ thanks to \cref{prop:vThetatpa}. Thus, $A*_\Theta G$ and $A*_{v\Theta'} G$ are equivalent as extensions of $A$ by $G$ in view of \cite[Lemma 4.7]{DK3}.
		
		We now prove that the action is free. Let $\Theta = (\0,w)$ be a twisted partial action of $G$ on $A$, such that $A*_\Theta G$ and $A*_{v\Theta} G$ are equivalent as extensions of $A$ by $G$. Then $\Theta$ is equivalent to $v\Theta$ by \cite[Lemmas 4.1 and 4.4]{DK3}. Thus, $v \in B^2(G,C(A))$ thanks to \cref{Theta-equiv-vTheta}.
	\end{proof}

	\section*{Acknowledgments}
The first  author was partially supported by Conselho Nacional de Desenvolvimento Cient\'ifico e Tecnol\'ogico --- CNPq of Brazil (Proc. 307873/2017-0) and  Funda\c{c}\~ao de Amparo \`a Pesquisa do Estado de S\~ao Paulo --- FAPESP of Brazil (Proc. 2015/09162-9). The second author was partially supported by  CNPq of Brazil (Proc. 404649/2018-1) and  Funda\c{c}\~ao para a Ci\^{e}ncia e a Tecnologia (Portuguese Foundation for Science and Technology) through the project PTDC/MAT-PUR/31174/2017. The third author was financed by 
Coordena\c{c}\~ao de Aperfei\c{c}oamento de Pessoal de N\'ivel Superior - Brasil (CAPES) - Finance Code 001. We thank the anonymous referee for the careful reading of our paper and remarks that helped us to correct several misprints and improve the proof of \cref{prop:vThetatpa}.

	\bibliography{bibl-pact}{}
	\bibliographystyle{acm}
	
\end{document}